\newcommand\mathens[1]{\mathbb{#1}} 
\newcommand{\ud}{\mathrm{d}}
\newcommand{\F}{\mathens{F}}
\newcommand{\N}{\mathens{N}}
\newcommand{\Z}{\mathens{Z}}
\newcommand{\Q}{\mathens{Q}}
\newcommand{\R}{\mathens{R}}
\newcommand{\C}{\mathens{C}}
\newcommand{\D}{\mathens{D}}
\newcommand{\T}{\mathens{T}}
\newcommand{\CP}{\C\mathrm{P}}
\newcommand\sphere[1]{\mathens{S}^{#1}}
\newcommand{\la}{\left\langle}
\newcommand{\ra}{\right\rangle}
\DeclareMathOperator{\length}{length}
\newcommand{\ham}{\mathrm{Ham}}
\newcommand{\id}{\mathrm{id}}
\DeclareMathOperator\im{im}
\DeclareMathOperator\ind{ind}
\DeclareMathOperator\coind{coind}
\newcommand\compi{\circledast}
\newcommand\compii{\diamond}
\newtheorem{thm}{Theorem}[section]
\newtheorem{lem}[thm]{Lemma}
\newtheorem{cor}[thm]{Corollary}
\newtheorem{prop}[thm]{Proposition}
\newtheorem{prop-def}[thm]{Definition-proposition}
\theoremstyle{definition}
\newtheorem{definition}[thm]{Definition}
\theoremstyle{remark}
\newcommand\action{\mathcal{T}}
\newcommand\lochom{\mathrm{C}}
\newcommand\betamax{\beta}
\newcommand\betatot{\beta_{\mathrm{tot}}}
\newcommand\pj{\mathrm{pj}}
\newcommand\bepsilon{{\boldsymbol{\varepsilon}}}
\newcommand\bdelta{{\boldsymbol{\delta}}}
\newcommand\bsigma{{\boldsymbol{\sigma}}}
\newcommand\boldeta{{\boldsymbol{\eta}}}
\newcommand\bB{{\mathcal{B}}}
\newcommand{\fix}{\mathrm{Fix}_c}
\def\mrm#1{{\mathrm{#1}}}
\def\cl#1{{\mathcal{#1}}}
\newcommand{\om}{\omega}
\newcommand{\eps}{\epsilon}
\newcommand{\tmin}{{\text{min},\bK}}
\newcommand{\bK}{{\mathbb{K}}}
\newcommand{\bF}{{\mathbb{F}}}
\DeclareMathOperator{\tot}{\mathrm{tot}}
\DeclareMathOperator{\loc}{\mathrm{loc}}
\DeclareMathOperator{\Spec}{\mathrm{Spec}}
\newcommand{\wh}[1]{\widehat{#1}}
\newcommand{\ol}[1]{\overline{#1}}
\let\@wraptoccontribs\wraptoccontribs\makeatother
\begin{document}

\title
{On the Hofer-Zehnder conjecture on $\CP^d$
via generating functions}

\author[S. Allais]{Simon Allais}
\contrib[with an appendix by]{Egor Shelukhin}
\address{Simon Allais, Université de Paris,
IMJ-PRG,
\newline\indent  8 place Aurélie de Nemours,
75013 Paris, France}
\email{simon.allais@imj-prg.fr}
\urladdr{http://perso.ens-lyon.fr/simon.allais/}
\address{Egor Shelukhin,
    Department of Mathematics and Statistics,
    University of Montreal,
    \newline\indent C.P. 6128 Succ. Centre-Ville Montréal,
QC H3C 3J7, Canada}
\email{shelukhin@dms.umontreal.ca}
\urladdr{https://sites.google.com/site/egorshel/}
\date{October 27, 2020}
\subjclass[2010]{70H12, 37J10, 58E05, 53D40}
\keywords{Generating functions, Hamiltonian, periodic points,
Hofer-Zehnder conjecture, barcodes, persistence modules}

\begin{abstract}
    We use generating function techniques developed by Givental,
    Théret and ourselves to deduce a proof in $\CP^d$ of the homological
    generalization of Franks theorem due to Shelukhin.
    This result proves in particular the Hofer-Zehnder conjecture in the non-degenerate
    case: every Hamiltonian diffeomorphism of $\CP^d$ that has at least
    $d+2$ non-degenerate periodic points has infinitely many periodic points.
    Our proof does not appeal to Floer homology or the theory of
    $J$-holomorphic curves. An appendix written by Shelukhin contains a new
    proof of the Smith-type inequality for barcodes of Hamiltonian
    diffeomorphisms that arise from Floer theory, which lends itself to
    adaptation to the setting of generating functions.
\end{abstract}
\maketitle

\section{Introduction}

Let $\CP^d$ be the complex $d$-dimensional projective space endowed with
the Fubini-Study symplectic structure $\omega$, that is $\pi^*\omega=i^*\Omega$ where
$\pi : \sphere{2d+1}\to\CP^d$ is the quotient map,
$i:\sphere{2d+1}\hookrightarrow\C^{d+1}$ is the inclusion map
and $\Omega := \sum_j \ud q_j \wedge\ud p_j$ is the canonical
symplectic form of $\C^{d+1}\simeq \R^{2(d+1)}$.
We are interested in the study of Hamiltonian diffeomorphisms
of $\CP^d$, which are time-one maps of those vector fields $X_t$
satisfying the Hamilton equations $X_t \lrcorner \omega = \ud h_t$
for some smooth maps $(h_t):[0,1]\times\CP^d\to\R$ called
Hamiltonian maps.
In 1985, Fortune-Weinstein \cite{For85} proved that
any Hamiltonian diffeomorphism of $\CP^d$
has at least $d+1$ fixed points, as was conjectured by Arnol'd.
Given a diffeomorphism $\varphi$, a $k$-periodic point
of $\varphi$ is by definition a fixed point
of the $k$-iterated map $\varphi^k$.
On closed symplectically aspherical manifold (\emph{e.g.} on tori $\T^{2d}$),
every Hamiltonian diffeomorphism has infinitely many periodic points.
This result was conjectured by Conley,
proven for the tori by Hingston \cite{Hin09} and generalized
by Ginzburg \cite{Gin10} after decades of flourishing advances:
Conley-Zehnder proved the non-degenerate case for tori \cite{CZ86},
Salamon-Zehnder proved the non-degenerate case for aspherical manifolds \cite{SZ92},
Franks-Handel and Le Calvez proved the conjecture for surfaces
\cite{FH03,LeC06}.
Contrary to aspherical symplectic manifolds,
the Conley conjecture does not hold in $\CP^d$:
there exists Hamiltonian diffeomorphisms with only finitely many
periodic points.
A simple counter-example is the diffeomorphism
\begin{equation*}
    [z_1:z_2:\cdots:z_{d+1}] \mapsto
    \left[e^{2i\pi a_1}z_1 : e^{2i\pi a_2}z_2:\cdots:
    e^{2i\pi a_{d+1}}z_{d+1}\right],
\end{equation*}
with rationally independent coefficients $a_1,\dotsc,a_{d+1}\in\R$.
This is indeed a Hamiltonian diffeomorphism
whose only periodic points are its fixed points:
the projection of the canonical base of $\C^{d+1}$.
Notice that this Hamiltonian diffeomorphism has the minimal number of
periodic points, that is the minimal number of fixed points conjectured
by Arnol'd.

In the case $d=1$, $\CP^1\simeq\sphere{2}$ and Hamiltonian diffeomorphisms
are the area preserving diffeomorphisms.
Franks \cite{Fra92,Fra96} proved that every area preserving homeomorphism
has either $2$ or infinitely many periodic points.
In 1994, Hofer-Zehnder \cite[p.263]{HZ94} conjectured a higher-dimensional 
generalization of this result:
every Hamiltonian diffeomorphism of $\CP^d$ has either $d+1$
or infinitely many periodic points
(it was stated for more general symplectic manifolds).
In this direction,
a symplectic proof of Franks result
was provided by Collier \emph{et al.} \cite{CKRTZ} in the smooth setting.
Here, we are interested in a version of the Hofer-Zehnder conjecture
proved by Shelukhin in 2019:
if a Hamiltonian diffeomorphism on a closed monotone symplectic manifold
with semisimple quantum homology
has a finite number of contractible periodic points then the sum of
dimension of the local Floer homologies at its contractible fixed points
is equal to the total dimension of the homology of the manifold
\cite{She19}.
As we will see, his proof is based on the theory of barcodes in symplectic topology
introduced by Polterovich-Shelukhin in \cite{PS16}.

In \cite{periodicCPd}, we elaborate on the ideas of Givental \cite{Giv90} and
Théret \cite{The98} to essentially build an analogue of the Floer homology
of Hamiltonian diffeomorphisms of $\CP^d$ with classical Morse theory
and generating functions.
In this article, we complete this construction in order to give an alternative
proof of the theorem of Shelukhin on the Hofer-Zehnder conjecture
``that could have been given in the 90s''.
Shelukhin introduced a homology count over a field $\F$ of the number of fixed points of
a Hamiltonian diffeomorphism with finitely many fixed points $\varphi$ of $\CP^d$.
In our setting, it is defined by
\begin{equation}\label{eq:N}
    N(\varphi;\F) := \sum_{x\in\mathrm{Fix}(\varphi)}
    \dim \lochom_*(\varphi;x;\F) \in \N,
\end{equation}
where the precise definition of the local homology $\lochom_*(\varphi;x;\F)$
of the fixed point $x$ over $\F$ is given in Section~\ref{se:homologysublevel}.
In fact, one can prove that $\lochom_*(\varphi;x;\F)$ is isomorphic
to the local Floer homology of $x$ over the field $\F$,
so that $N(\varphi;\F)$ equals the homological count defined by Shelukhin.
One always has $N(\varphi;\F)\geq d+1$, this is an avatar of
the Fortune-Weinstein theorem.
The theorem of Shelukhin that we prove is the following.
\begin{thm}[{\cite[Theorem~A for $M=\CP^d$]{She19}}]\label{thm:main}
    Every Hamiltonian diffeomorphism $\varphi$ of $\CP^d$
    with finitely many fixed points
    such that $N(\varphi;\F) > d+1$ for some field $\F$ has infinitely many
    periodic points.
    Moreover,
    if $\F$ has characteristic $0$ in the former assumption, there exists
    $A\in\N$ such
    that, for all prime $p\geq A$, $\varphi$ has a $p$-periodic point
    that is not a fixed point;
    if $\F$ has characteristic $p\neq 0$, $\varphi$ has infinitely many
    periodic points with period including in $\{ p^k\ |\ k\in\N\}$.
\end{thm}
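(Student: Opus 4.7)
The plan is to transpose Shelukhin's barcode argument from \cite{She19} into the generating-function framework of \cite{periodicCPd}. Argue by contradiction: assume $\varphi$ admits only finitely many periodic points. For each iterate $\varphi^k$, the Givental--Théret generating function $S_k$ is a smooth function on a finite-dimensional auxiliary manifold whose critical points correspond bijectively to fixed points of $\varphi^k$ and whose filtered Morse complex computes the quantum/Floer homology of $\CP^d$. The sublevel filtration turns this complex into a persistence module and defines a barcode $\bB_k(\F)$: the total number of bars, counted with multiplicity, equals $N(\varphi^k;\F)$, of which exactly $d+1$ are infinite.

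The hypothesis $N(\varphi;\F)>d+1$ therefore guarantees at least one strictly positive-length finite bar in $\bB_1(\F)$, yielding $\beta(\varphi;\F)>0$ for a suitable barcode invariant $\beta$ (for instance the boundary depth). Conversely, monotonicity of $\CP^d$ forces the action spectrum of every $\varphi^k$ to be invariant under translation by the symplectic area $\lambda$ of a projective line, so the endpoints of $\bB_k(\F)$ lie in a fundamental domain of length $\lambda$. In particular the boundary depth is uniformly bounded, $\beta(\varphi^k;\F)\leq \lambda$, independently of $k$.

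The decisive input is the Smith-type inequality for the generating-function barcode, proved in the appendix by Shelukhin: for every prime $p$ and every field $\F$ of characteristic $p$,
\begin{equation*}
    \beta(\varphi^p;\F)\geq p\cdot \beta(\varphi;\F) - C,
\end{equation*}
where $C$ is a constant independent of $p$. Combined with the upper bound $\beta(\varphi^p;\F)\leq \lambda$, this inequality forces $p\cdot\beta(\varphi;\F)\leq \lambda + C$, contradicting $\beta(\varphi;\F)>0$ as soon as $p$ is sufficiently large. The contradiction produces a periodic orbit of prime period $p$ distinct from every fixed point; iterating the argument with $\varphi^p$ in place of $\varphi$ yields the infinite family of periodic points with period in $\{p^k\ |\ k\in\N\}$ asserted in the characteristic-$p$ statement.

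For the characteristic $0$ case, integrality of the local Morse homology and a universal-coefficient argument show that $N(\varphi;\F) = N(\varphi;\F_p)$ for all but finitely many primes $p$, so the characteristic-$p$ argument applies for every large prime $p$ and produces the required $p$-periodic non-fixed point. The main obstacle is the Smith-type inequality itself: its proof requires a $\Z/p$-equivariant interpretation of the generating function $S_p$ of $\varphi^p$ and an application of classical Smith localization compatible with the sublevel-set filtration. Carrying out this $\Z/p$-equivariant analysis in the generating function setting, rather than the Floer one, is the technical heart of the argument and the content of the appendix.
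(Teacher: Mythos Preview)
Your high-level strategy is correct, but the mechanism you describe has a genuine gap: you apply the Smith-type inequality to the \emph{boundary depth} $\betamax$, whereas what is actually proved (Corollary~\ref{cor:betatotSmith}) is
\[
\betatot(\bsigma^p;\F_p) \geq p\,\betatot(\bsigma;\F_p),
\]
an inequality for the \emph{total} bar length $\betatot$. No inequality of the form $\betamax(\varphi^p)\geq p\,\betamax(\varphi)-C$ is established, and indeed one should not expect it: the Smith inequality on sublevel-set homology controls total dimension, not the length of a single longest bar. The way the two invariants interact is through the chain
\[
K(\bsigma^p;\F_p)\cdot\betamax(\bsigma^p;\F_p)\ \geq\ \betatot(\bsigma^p;\F_p)\ \geq\ p\,\betatot(\bsigma;\F_p),
\]
combined with the uniform bound $\betamax(\bsigma^p;\F_p)\leq 1$ (Theorem~\ref{thm:betamax}). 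This forces the \emph{number} of finite bars $K(\bsigma^p;\F_p)$, hence $N(\bsigma^p;\F_p)$, to diverge with $p$.

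A second missing ingredient is how divergence of $N(\bsigma^p;\F_p)$ produces a genuinely new $p$-periodic point. Your sentence ``the contradiction produces a periodic orbit of prime period $p$'' skips this. One needs Corollary~\ref{cor:gromollmeyer} (a Gromoll--Meyer type bound): the local homology of each fixed point $z$ under admissible iteration satisfies $\dim\lochom_*(\bsigma^p;z;\F_p)<B$ uniformly in $p$, so the finitely many fixed points can only account for a bounded contribution to $N(\bsigma^p;\F_p)$. The same local-homology control, via Proposition~\ref{prop:gromollmeyer}, is what makes the characteristic-$p$ iteration argument work: after passing to a large $p^k$-iterate so that all relevant periodic points become admissible fixed points, $N$ becomes constant in $k$, contradicting its divergence. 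Finally, your justification of the upper bound on $\betamax$ (``endpoints lie in a fundamental domain'') is not the actual argument: periodicity of the spectrum does not by itself bound bar lengths, and Theorem~\ref{thm:betamax} is proved via an explicit interleaving with the persistence module of the identity using the product $\compii$.
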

In the special case where every fixed point of $\varphi$ is non-degenerate,
one has $\dim\lochom_*(\varphi;x;\F) = 1$ for every fixed point;
hence, $N(\varphi;\F)$ equals the number of fixed points of $\varphi$.
As a special case,
every Hamiltonian diffeomorphism of $\CP^d$ that has at least
$d+2$ non-degenerate periodic points has infinitely many
periodic points and the number grows at least like the sum of prime numbers
(\emph{i.e.} the number of periodic
points of period less than $k$ is $\gtrsim\frac{k^2}{\log k}$).
One can indeed replace $\varphi$ with a power $k$ of itself in order
to get $N(\varphi^k;\F)\geq d+2$ and apply Theorem~\ref{thm:main}.

Our proof follows the same main steps as the original one
and takes advantage of the new proof given by Shelukhin in
Appendix~\ref{se:She} of inequality (\ref{eq:smithbetatot}) below.
Let us give a short outline of it.
Our analogue of the Floer homology of the Hamiltonian diffeomorphism
associated with the Hamiltonian map $(h_s)$ defines with its inclusion morphisms
a persistence module $(G_*^{(-\infty,t)}(h_s;\F))_t$.
Such a persistence module can be represented in a graphical way by a barcode
(see Figure~\ref{fig:barcode}).
Infinite bars of the barcodes always exist in the same cardinality:
they are associated with the spectral values of $(h_s)$.
On the other hand, finite bars exist if and only if $N(\varphi;\F)>d+1$.
The barcodes always have infinitely many bars
but there is a natural free $\Z$-action on their collection, preserving the length of bars
in such a way that, if $\varphi$ has finitely many fixed points,
there are only finitely many $\Z$-orbits of finite bars.
Moreover, if $\varphi$ has finitely many periodic points,
the number of finite bars associated with its iterations
is uniformly bounded.
The proof consists in showing that the existence of a finite bar
implies an unbounded growth of the number of $\Z$-orbits of finite
bars.

Since $\Z$ acts on the collection of finite bars by preserving their length, one can
define the sequence of bar length of
each $\Z$-orbit $0<\beta_1((h_s);\F)\leq \beta_2((h_s);\F)\leq \cdots \leq
\betamax((h_s);\F)$.
We denote by $\betatot((h_s);\F)$ the sum of these lengths.
The proof relies on two results:
the length of a finite bar is bounded above by 1
(with our normalization, see (\ref{eq:action}) below),
the sum of the length satisfies
the Smith-type inequality
\begin{equation}\label{eq:smithbetatot}
    \betatot((h_{ps});\F_p) \geq p \betatot((h_s);\F_p),
\end{equation}
for all prime $p$.
These two steps easily imply Theorem~\ref{thm:main}.
We refer to Section~\ref{se:outline} for a more precise outline of
the proof.

\subsection*{Organization of the paper}
In Section~\ref{se:outline}, we give an outline of the proof.
In Section~\ref{se:preliminary}, we provide the background on persistence
modules and generating functions.
In Section~\ref{se:gfhomology}, we define and study the generating function
homology associated with Hamiltonian maps of $\CP^d$.
In Section~\ref{se:betamax}, we prove that every finite bar
of the barcode associated with a Hamiltonian map of $\CP^d$
has length less than $1$.
In Section~\ref{se:smith}, we show that the sum of the lengths of
the representatives of the finite bars of such a barcode
satisfies a Smith-type inequality.
In Section~\ref{se:proof}, we prove Theorem~\ref{thm:main}.
In Appendix~\ref{se:pj}, we prove fundamental properties of
the projective homology join used throughout the paper.
In Appendix~\ref{se:She}, Shelukhin gives a new proof of
the aforementioned Smith-type inequality on a closed monotone symplectic
manifold in the realm of Floer theory.

\subsection*{Acknowledgments}
I am deeply grateful to Egor Shelukhin
for his helpful advice, his constant support and
sharing with me the proof now written in Appendix~\ref{se:She}.
I thank Vincent Humilière who helps me understand the interest of
the persistence module theory in Hamiltonian dynamics.
I am also thankful to my colleagues at the UMPA for
fruitful related conversations, especially Jean-Claude Sikorav
and Damien Junger who both took some time to study with me
some foundational questions.
I thank my advisor Marco Mazzucchelli for his teaching and support.
All of this would never  have happened if I had not met
Vincent and Egor during the conference \emph{Interactions of Symplectic Topology and Dynamics}
organised by Viktor Ginzburg, Ba\c sak Gürel, Marco and Alfonso Sorrentino in
June 2019 at Cortona
and if I had not participated
in the summer school \emph{Persistent homology and Barcodes}
organised by Peter Albers, Leonid Polterovich and Kai Zehmisch in August 2019 at Marburg
where I had had time to discuss
this project with Egor.

\section{Outline of the proof}\label{se:outline}

Here, we introduce the main tools of the proof.
We postpone the proof of technical statements and the definition
of technical objects to
the remaining sections in order to
give the proof of Theorem~\ref{thm:main} at the end of this section.

Let $(h_s):[0,1]\times\CP^d\to\R$ be a smooth periodic Hamiltonian map
and let $(\varphi_s)$ be the associated Hamiltonian flow on $\CP^d$.
This Hamiltonian map defines a unique Hamiltonian map
$(H_s):[0,1]\times (\C^{d+1}\setminus 0) \to \R$ that is
2-homogeneous, invariant under the diagonal action of $S^1$ on
$\C^{d+1}\setminus 0$
given by
$\lambda\cdot (z_0,\ldots,z_d) := (\lambda z_0,\ldots,\lambda z_d)$
(we will simply write ``$S^1$-invariant'') so that
its restriction on the unit sphere $\sphere{2d+1}\subset\C^{d+1}$
is a lift of $(h_s)$ under the quotient map $\sphere{2d+1}\to\CP^d$.
Let $(\Phi_s)$ be the $\C$-equivariant Hamiltonian flow associated with
$(H_s)$.
We will say that $(H_s)$ and $(\Phi_s)$ are the lifted Hamiltonian map
and Hamiltonian flow of $(h_s)$.
In Section~\ref{se:gf}, we introduce decompositions
of $(\Phi_s)$ into ``small'' Hamiltonian diffeomorphisms (see Section~
\ref{se:gf})
that we usually write $\bsigma = (\sigma_1,\ldots,\sigma_n)$
so that $\sigma_k\circ\cdots\circ\sigma_1 = \Phi_{k/n}$ for
$1\leq k\leq n$.
For such a decomposition $\bsigma$,
we define homology groups $G_*^{(a,b)}(\bsigma)$
for almost all $-\infty\leq a<b\leq +\infty$
in Section~\ref{se:gfhomology}.
In the end of Section~\ref{se:interpolation},
we prove that these homology groups and their natural morphisms
do not depend on the choice of the decomposition $\bsigma$
of $(\Phi_s)$ (up to isomorphism) so that we can write
\begin{equation*}
    G_*^{(a,b)}(h_s) := G_*^{(a,b)}(\bsigma),
\end{equation*}
fixing a decomposition $\bsigma$.
We call these homology groups ``Generating functions homology groups
of $(h_s)$'' or simply ``GF-homology of $(h_s)$''.

Generating functions homology groups of $(h_s)$ satisfy the same key
properties as the Floer homology groups of $(h_s)$
and one can hope that $G_*^{(a,b)}(h_s)$ is isomorphic
to $HF_*^{(\pi a,\pi b)}(h_s)$ with commuting inclusion and boundary morphisms
(the $\pi$ factor is due to our normalisation, see (\ref{eq:action}) below).
These groups are homology groups defined over any chosen ring $R$
(in fact over any group $G$).
Given a fixed point $z\in\CP^d$ of $\varphi_1$
and a capping $u:\D^2\to\CP^d$,
that is a smooth map from the unit 2-disk of $\C$ to $\CP^d$
so that $u(e^{2i\pi s}) = \varphi_s(z)$,
one can define the action $t(\bar{z})\in \R$ of
the capped orbit $\bar{z}:=(z,u)$ by
\begin{equation}\label{eq:action}
    t(\bar{z}) = -\frac{1}{\pi} \left( \int_{\D^2} u^*\omega +
    \int_0^1 h_s \circ \varphi_s(z) \ud s \right).
\end{equation}
Recapping gives a $\Z$-orbit of action values:
$t(A\#\bar{z}) = t(\bar{z}) + k$ where
$A\in\pi_2(\CP^d)$ and
$\pi k = -\la [\omega] , A\ra$.
On Floer homology, the recapping by the generator $A_0\in\pi_2(\CP^d)\simeq\Z$
of symplectic area $\la [\omega],A_0\ra = \pi$
induces the quantum operator 
\begin{equation*}
    q : HF_*^{(a,b)}(h_s) \xrightarrow{\simeq}
    HF_{*-2(d+1)}^{(a-\pi,b-\pi)}(h_s).
\end{equation*}
The analogue isomorphism is defined at (\ref{iso:periodic})
(more precisely, its inverse).

Taking $R$ to be a field $\F$,
these homology groups are $\F$-vector spaces and the
family $(G_*^{(-\infty,t)}(h_s;\F))_t$ together
with its inclusion morphisms define a persistent module
that we call the persistence module associated with $(h_s)$
over the field $\F$.
Assuming that the Hamiltonian diffeomorphism $\varphi_1$
has finitely many fixed points,
the persistence modules of fixed degree $(G_k^{(-\infty,t)}(h_s;\F))_t$,
$k\in\Z$, satisfy suitable finiteness assumptions
and one can define a finite barcode for each of them,
giving a global countable (graded) barcode for the persistence module
associated with $(h_s)$.
Let us describe this barcode (see Figure~\ref{fig:barcode} for an example).
The isomorphism
$G_*^{(-\infty,t)}(h_s) \simeq G_{*+2(d+1)}^{(-\infty,t+1)}(h_s)$
induces a $\Z$-action on the bars of the barcode sending
a bar $(a,b)$ of degree $k$ on a bar $(a+1,b+1)$ of degree $k+2(d+1)$.
Therefore, it is enough to describe a set of representatives of bars
under this action.
In the case where $\varphi_1$ is non-degenerate,
end-points of representative bars are in one-to-one correspondence
with fixed points of $\CP^d$, the value of an end-point being equal
to the action of a capping of the associated fixed point.
In general, a fixed point should be counted with multiplicity
equal to its local homology, which gives $N((h_s);\F)$
end-points.
Among the representative bars, $d+1$ and only $d+1$
are infinite.
This is an avatar of Fortune-Weinstein theorem,
in fact the increasing sequence $(c_k(h_s))_{k\in\Z}$
of values of end-points of the infinite bars of the whole barcode
corresponds to the sequence of spectral invariants of $(h_s)$
(see Theorem~\ref{thm:spectral}).

Theorem~\ref{thm:main} is proved by studying the length of
the finite bars of the barcode of $(h_s)$.
Let us denote by $I_1,\ldots,I_n\subset\R$ representatives of
the $\Z$-orbits of finite bars over the field $\F$.
Up to a permutation, one can assume that $(\length I_k)_k$ is non-decreasing.
Let $\beta_k((h_s);\F)$ be the length of $I_k$,
$\betamax((h_s);\F) := \beta_n((h_s);\F)$ be the length of
the longest bar and
\begin{equation*}
    \betatot((h_s);\F) := \sum_k \beta_k((h_s);\F).
\end{equation*}
The number $\betamax((h_s);\F)$ was first introduced by Usher
and called the boundary depth of $(h_s)$ \cite{Ush11}.
The first important property is that every finite bar has a length
smaller or equal to 1 (see Theorem~\ref{thm:betamax}).
This is the analogue of \cite[Theorem~B]{She19} in the special case $M=\CP^d$
and the
proof follows the same key ideas:
we define a product between GF-homologies and use it
to find an interleaving between the GF-homology of $(h_s)$
and the one of $(h'_s)\equiv 0$ that does not have any finite bar.
The second important property is the Smith inequality (\ref{eq:smithbetatot})
stated in Corollary~\ref{cor:betatotSmith} which is the analogue
of \cite[Theorem~D]{She19} in the special case $M=\CP^d$.
The general strategy follows a new proof of \cite[Theorem~D]{She19}
given by Shelukhin in Appendix~\ref{se:She}.
In the realm of generating functions,
the proof is rather short and very elementary: it essentially relies on the classical
Smith inequality (\ref{eq:smithineq});
it could seem surprising regarding the extraordinary machinery
necessary to prove its Floer theoretical analogue
(although the Floer theoretical proof is available for every closed monotone
symplectic manifold).

\section{Preliminaries}
\label{se:preliminary}

In this section, we fix convention and notation used throughout the article
and recall known results that will be used as well.

\subsection{Persistence modules and barcodes}
\label{se:persistence}

Let us fix a field $\F$.
A persistence module $(V,\pi)$ over the field $\F$ is
an $\R$-family of $\F$-vector spaces $(V^t)_{t\in\R}$
with a collection of morphisms
$\pi_s^t : V^s \to V^t$ for $s\leq t$ such that
$\pi_t^t = \id_{V^t}$ and $\pi_s^t\circ\pi_r^s = \pi_r^t$
whenever $r\leq s\leq t$ that one can call persistence morphisms.
We will always ask our persistence modules $(V,\pi)$ to be
right continuous: for every $t\in\R$, $\pi_t^s$ is an isomorphism
for every $s>t$ close to $t$.
We extend this definition to families $(V^t)_{t\in\R\setminus S}$ 
satisfying the same axioms with the exception that $t\in\R\setminus S$
where $S\subset \R$ is discrete by identifying $(V^t)$ with the persistence
module $\left(\overline{V}^t\right)_{t\in\R}$ defined by taking the direct limit
\begin{equation*}
    \overline{V}^t := \varprojlim_{s>t} V^s, \quad \forall t\in\R,
\end{equation*}
with the obvious extension of the morphisms $\pi_s^t$.
Given two persistence modules $(V,\pi)$ and $(V',\pi')$,
one defines the direct sum of them in the obvious way
$(V\oplus V',\pi\oplus\pi')$ which is a persistence module.
A morphism of persistence module $f:(V,\pi)\to(V',\pi')$
is a family of morphisms $f_t:V^t\to V^{\prime t}$ commuting
with the persistence morphisms.

A persistence module $(V^t)$ is of finite type if
$V^t = 0$ for $t$ sufficiently close to $-\infty$,
every $V^t$ has a finite dimension and there exists
a finite set $S\subset\R$ such that $\pi_s^t$  is an isomorphism
whenever $s$ and $t$ belong to the same connected component of $\R\setminus S$.
The fundamental example is given by $V^t := H_*(\{ f\leq t\};\F)$,
where $f:M\to\R$ is a smooth function on a compact manifold $M$
with finitely many critical points
(by taking a general map on any space, we only have a general persistence module).
Given an interval $I=[a,b)$, with $b\in (a,+\infty]$, we define the persistence
module
$\F(I)$ by $V^t = \F$ for $t\in I$ and $V^t=0$ otherwise,
$\pi_s^t = \id$ when $t$ and $s$ belong to the same connected component of
$\R\setminus \{ a,b\}$
and $\pi_s^t = 0$ otherwise.
It is a persistence module of finite type.
We think of it as representing a class that appears at $t=a$ and
persists until $t=b$ (it persists indefinitely if $b=+\infty$).
Graphically, we represent $\F(I)$ by drawing an horizontal bar from $t=a$
to $t=b$ or without right endpoint if $I=[a,+\infty)$.
The normal form theorem asserts that for every
persistence module $V$ of finite type, there exists a unique finite collection
of couples $(I_k,m_k)$, $I_k\subset \R$ being a bar as above and $m_k\in\N^*$,
so that there is an isomorphism of persistence modules
\begin{equation*}
    V \simeq \bigoplus_k \F(I_k)^{\oplus m_k}
\end{equation*}
(see for instance \cite{Bar94,CZ05}).
The collection $\bB(V) := \{ (I_k,m_k) \}$ is called the barcode of $V$;
it is graphically represented by drawing each horizontal bars of
the $I_k$'s with multiplicity in the same figure (see Figure~\ref{fig:barcode}
for an example with 6 infinite bars and 5 finite bars; ignore the fact
that it is actually part of a larger barcode).

Although we will not need it in its full strength,
we recall the isometry theorem between the bottleneck distance
between barcodes and the interleaving distance between persistence
modules (of finite type).
Given $\delta,\delta'\in\R$ with $\delta+\delta'\geq 0$, a $(\delta,\delta')$-interleaving
between persistence modules $(V,\pi)$ and $(W,\kappa)$ is a couple of
morphisms of persistence modules $f:(V^t)\to (W^{t+\delta})$
and $g:(W^t)\to (V^{t+\delta'})$ such that
$g_{t+\delta}\circ f_t = \pi_t^{t+\delta+\delta'}$
and $f_{t+\delta'}\circ g_t = \kappa_t^{t+\delta+\delta'}$
for all $t\in\R$.
When such an interleaving exists, it is said that $V$ and $W$ are
$(\delta,\delta')$-interleaved.
Given $\delta\geq 0$, a $\delta$-interleaving is by definition
a $(\delta,\delta)$-interleaving.
For instance, if $(V^t)$ and $(W^t)$ are $(\delta,\delta')$-interleaved,
then $(V^t)$ and $(W^{t+\delta_-})$ are $\delta_+$-interleaved, where
$\delta_- = \frac{\delta-\delta'}{2}$ and $\delta_+=\frac{\delta+\delta'}{2}$.
The interleaving distance between two persistence modules $V$ and $W$
is defined by
\begin{equation*}
    d_{\mathrm{int}}(V,W) := \inf\{ \delta\geq 0\ |\
    V \text{ and } W \text{ are $\delta$-interleaved} \}.
\end{equation*}
This is a true distance between persistence modules
up to isomorphisms taking values in $[0,+\infty]$.

Let $\bB := \{ (I_k,m_k) \}$ and $\bB' := \{ (J_l,m'_l) \}$
be two barcodes that we see as multisets of intervals.
Given an interval $I=(a,b]$ or $(a,+\infty)$, we set $I^\delta := (a-\delta,b+\delta]$
or $(a-\delta,+\infty)$ respectively.
Given $\delta\geq 0$, a $\delta$-matching between
the barcodes $\bB$ and $\bB'$
is a bijection of multisets $\mu:\bB_0 \to \bB'_0$
where $\bB_0$ and $\bB'_0$ are some submultisets of
$\bB$ and $\bB'$ containing (at least) every interval of length
$\geq 2\delta$ and such that
$\mu(I)\subset I^\delta$ and $I\subset \mu(I)^\delta$
for every $I\in\bB_0$.
When such a $\delta$-matching exists, it is said that
$\bB$ and $\bB'$ are $\delta$-matched.
The bottleneck distance between two barcodes $\bB$ and
$\bB'$ is defined by
\begin{equation*}
    d_{\mathrm{bottleneck}}(\bB,\bB') :=
    \inf\{ \delta\geq 0\ |\ \bB \text{ and } \bB'
    \text{ are $\delta$-matched} \}.
\end{equation*}
This is a true distance taking values in $[0,+\infty]$.

The isometry theorem between the bottleneck distance and the interleaving distance
states that given any persistence modules of finite type $V$ and $W$,
\begin{equation*}
    d_{\mathrm{int}}(V,W) = d_{\mathrm{bottleneck}}(\bB(V),\bB(W)),
\end{equation*}
(see for instance \cite{BL14}).

\subsection{Generating functions of $\C$-equivariant diffeomorphisms}
\label{se:gf}

In this section, we summarize definitions widely discussed in
\cite[Section~5]{periodicCPd}.
This use of generating functions was much inspired by the dynamical
viewpoint of Chaperon \cite{Cha84} and the work of Givental \cite{Giv90}
and Théret \cite{The98}.

Given a Hamiltonian map $(h_s):[0,1]\times\CP^n\to\R$,
let $(H_s)$ be the lifted Hamiltonian of $\C^{d+1}$ that is
2-homogeneous and $S^1$-invariant as defined in the beginning of
Section~\ref{se:outline} and let $(\Phi_s)$ be the associated Hamiltonian flow.
The maps $\Phi_s$ are smooth diffeomorphisms of $\C^{d+1}\setminus 0$
that extend to homeomorphisms of $\C^{d+1}$ by setting $\Phi_s(0)=0$.
We refer to these maps as $\C$-equivariant Hamiltonian diffeomorphisms because
\begin{equation*}
    \Phi_s(\lambda z) = \lambda\Phi_s(z),\quad
    \forall \lambda\in\C,\forall z\in\C^{d+1}.
\end{equation*}

In general, a generating function
will denote a map $F:\C^{d+1}\times\C^k\to\R$, $(x;\xi)\mapsto F(x;\xi)$,
such that $\partial_\xi F$ is a submersion.
We say that $x\in\C^{d+1}$ is the main variable of $F$ and $\xi\in\C^k$ is the
auxiliary variable.
Let $\Sigma_F\subset \C^{d+1}\times\C^k$ be the submanifold
\begin{equation*}
    \Sigma_F := \left\{ 
    (x;\xi)\in \C^{d+1}\times\C^k \ |\ \partial_\xi F(x;\xi) = 0 \right\}.
\end{equation*}
We say that the generating function $F$ is generating the Hamiltonian
diffeomorphism $\Phi$ of $\C^{d+1}$ if
\begin{equation*}\label{eq:egf}
    \forall z\in\C^{d+1}, \exists ! (x;\xi)\in\Sigma_F,\quad
    x=\frac{z+\Phi(z)}{2} \quad \text{ and } \quad
    \partial_x F(x;\xi)= i(z-\Phi(z)).
\end{equation*}
The critical points of a generating function of $\Phi$ are in
one-to-one correspondence with the fixed points of $\Phi$,
the bijection being $(x;\xi)\mapsto x$.
Given generating functions $F:\C^{d+1}\times\C^k\to\R$
and $G:\C^{d+1}\times\C^l\to\R$ of $\Phi$ and $\Psi$ respectively,
the fiberwise sum of $F$ and $G$ denotes the map
\begin{equation}\label{eq:fiberwise}
    (F+G)(x;\xi,\eta) := F(x;\xi) + G(x;\eta).
\end{equation}
Although this is not a generating function of $\Phi\circ\Psi$,
the critical points of $F+G$ are also in bijection with the fixed
points of $\Phi\circ\Psi$ \emph{via} $(x;\xi,\eta)\mapsto
x-i\partial_xG(x;\eta)/2$
(these statements are easy consequences of the definitions).

Let $\Phi$ be a 
$\C$-equivariant Hamiltonian diffeomorphism of $\C^{d+1}$
which can be decomposed in $\Phi = \sigma_n\circ\cdots\circ\sigma_1$
where every Hamiltonian diffeomorphism $\sigma_k$ is sufficiently $C^1$-close
to $\id$ such that they admit generating functions without auxiliary variable
$f_k:\C^{d+1}\to\R$.
We call such generating functions
elementary generating functions.
They are uniquely defined up to an additive constant,
we will set $f_k(0)=0$.
For all $n\in\N^*$,
we will say that the $n$-tuple $\boldsymbol{\sigma}=(\sigma_1,\dotsc,\sigma_n)$
is associated to the Hamiltonian flow $(\Phi_s)$ if
there exists real numbers $0=t_0\leq t_1\leq \cdots\leq t_n = 1$
such that $\sigma_k = \Phi_{t_k}\circ\Phi_{t_{k-1}}^{-1}$.
For all $k\in\N$, we denote $\bepsilon^k$ the $k$-tuple 
\begin{equation*}
    \bepsilon^k := (\id,\ldots,\id)
\end{equation*}
and we define the $k$-th power $\bsigma^k$ of an $n$-tuple $\bsigma$
as the $kn$-tuple
\begin{equation*}
    \bsigma^k := (\bsigma,\ldots,\bsigma).
\end{equation*}
A continuous family of such tuples $(\bsigma_s)$ will denote
a family of tuples of the same size $n\geq 1$,
$\boldsymbol{\sigma}_s =: (\sigma_{1,s},\dotsc,\sigma_{n,s})$ such that
the maps $s\mapsto \sigma_{k,s}$ are $C^1$-continuous.
Every $\C$-equivariant
Hamiltonian flow $(\Phi_s)_{s\in[0,1]}$ admit a continuous family of associated tuple
$(\boldsymbol{\sigma}_s)$ that is $\boldsymbol{\sigma}_s$ is associated to $\Phi_s$
for all $s\in[0,1]$ (and the size can be taken as large as wanted).
We denote by $F_{\boldsymbol{\sigma}}$ the following function $(\C^{d+1})^n\to\R$:
\begin{equation*}
    F_{\boldsymbol{\sigma}} (v_1,\dotsc,v_n) := \sum_{k=1}^{n}
        f_k\left(\frac{v_k + v_{k+1}}{2}\right) +
    \frac{1}{2}\la v_k,iv_{k+1}\ra,
\end{equation*}
with convention $v_{n+1} = v_1$, each $f_k:\C^{d+1}\to\R$ being the elementary
generating function associated with $\sigma_k$.
If $n$ is odd,
this function is a generating function of $\Phi$ with main variable $v_1$
and auxiliary variable $(v_2,\ldots,v_n)$
that will be referred as
the generating function associated with $\bsigma$.
If the opposite is not explicitly supposed,
$\bsigma$ will have an odd size.

Let $(\Phi_s)$ be the lift of the Hamiltonian flow $(\varphi_s)$ of $\CP^d$
under the Hamiltonian map $(h_s)$.
In this case, we take 2-homogeneous and $S^1$-equivariant $f_k$'s, that is
\begin{equation*}
    f_k(\lambda w) = |\lambda|^2 f_k(w),\quad
    \forall \lambda\in\C,\forall w\in\C^{d+1},
\end{equation*}
so that the resulting $F_\bsigma$ is also 2-homogeneous and $S^1$-invariant.
Hence, a $\C$-line of critical points $\C\mathbf{v}$ of $F_\bsigma$
corresponds to
a $\C$-line of fixed points $\C v_1$ of $\Phi_1$.
The Euler identity for homogeneous maps implies that these $\C$-lines
of critical points have value $0$.
Given a 2-homogeneous and $S^1$-invariant map $F:\C^{N+1}\to\R$,
we denote by $\widehat{F}:\CP^N \to \R$ the projectivization of $F$,
that is the unique map for which the restriction of $F$ to the unit sphere
$\sphere{2N+1}\subset\C^{N+1}$ is a lift under the quotient map
$\sphere{2N+1}\to\CP^N$.
For instance, $h_s = \widehat{H}_s$.
Critical $\C$-lines of $F_\bsigma$ are in one-to-one correspondence with
critical points of $\widehat{F}_\bsigma$ \emph{with value $0$}.
Generically, $\widehat{F}_\bsigma$ has no critical point of value $0$ at all.
Indeed, for all $[z]\in\CP^d$ with lift $z\in\C^{d+1}\setminus 0$,
\begin{equation*}\label{eq:fixphifixPhi}
    \varphi_1([z])=[z] \text{ with action } t\in\R/\Z \quad
    \Leftrightarrow \quad e^{-2i\pi t}\Phi_1(z) = z,
\end{equation*}
where $t$ is defined by (\ref{eq:action}) (it is well-defined up
to an integer if one does not choose a preferred capping);
this is due to Théret \cite[Proposition~5.8]{The98}.
Therefore, for a fixed $m\in\N^*$, we need to define a family $(\bsigma_{m,t})$ of
tuples generating $(e^{-2i\pi t}\Phi_1)$, $-m\leq t\leq m$.

Let $(\delta_t)$ be the family of small $\C$-equivariant diffeomorphisms
$\delta_t(z):=e^{-2i\pi t}z$, $t\in (-1/2,1/2)$.
The associated elementary generating function is
$w\mapsto -\tan(\pi t)\|w\|^2$.
Let us fix once for all an even number $n_0\geq 4$ and
let $(\bdelta_t^{(1)})$ be the family of
$n_0$-tuples $(\delta_{t/n_0},\ldots,\delta_{t/n_0})$
generating $z\mapsto e^{-2i\pi t}z$ for $t\in (-2,2)$.
For all $m\in\N^*$, let $(\bdelta_t^{(m)})$ be a family
of $mn_0$-tuples generating $z\mapsto e^{-2i\pi t}z$ for
$t\in(-m-1,m+1)$ and satisfying 
\begin{equation}\label{eq:bdeltam}
    \bdelta_t^{(m+1)} = \left(\bdelta_t^{(m)},\bepsilon^{n_0}\right),\quad
    \forall t\in [-m,m].
\end{equation}
More precisely, let $\chi:\R\to\R$ be an odd smooth non-decreasing map
such that $\chi_m\equiv\id$ on $[-m-1/4,m+1/4]$ and $\chi_m\equiv m+1/2$ on
$[m+3/4,+\infty)$. We set
\begin{equation*}
    \bdelta_t^{(m+1)} = \left(\bdelta_{\chi_m(t)}^{(m)}
    ,\bdelta^{(1)}_{t-\chi_m(t)}\right),\quad
    \forall t\in (-m-2,m+2).
\end{equation*}

Finally, we can set
\begin{equation*}
    \bsigma_{m,t} := \left(\bsigma,\bdelta_t^{(m)}\right),\quad
    \forall t\in [-m,m].
\end{equation*}
Since $\tan$ is increasing on $[-\pi/2,\pi/2]$,
we deduce that $\partial_t F_{\bsigma_{m,t}} \leq 0$
by a straightforward computation.

\subsection{Homology of sublevel sets and local homology
of a fixed point}
\label{se:homologysublevel}

The study of the homology of sublevel sets of generating functions
was introduced by Viterbo \cite{Vit} who introduced spectral invariants
of Hamiltonian diffeomorphisms of $\R^{2d}$ with compact support.
This work led to the definition of homology groups of these diffeomorphisms
by Traynor \cite{Tray} (which are in fact isomorphic
to their Floer theoretic analogue \cite{Vit96}).
Here, we show how to define similar homology groups for Hamiltonian
diffeomorphisms of $\CP^d$.
We refer to \cite[Section~5]{periodicCPd} for comprehensive
proofs and references of the results stated in this section.

Throughout this paper, $H_*(X)$ and $H^*(X)$ denote respectively
the singular homology and the singular cohomology of a topological
space or pair $X$ over an indeterminate ring $R$.
If one needs to specify the ring $R$, one writes $H_*(X;R)$
and $H^*(X;R)$ instead.
Let $\bsigma$ be an $n$-tuple of small $\C$-equivariant Hamiltonian
diffeomorphisms.
We denote by $Z(\bsigma)\subset \CP^{n(d+1)-1}$ the sublevel set
\begin{equation*}
    Z(\bsigma) := \left\{ \widehat{F}_{\bsigma} \leq 0 \right\}.
\end{equation*}
We denote by $HZ_*(\bsigma)$ the shifted homology group
\begin{equation*}
    HZ_*(\bsigma) := H_{*+(n-1)(d+1)}(Z(\bsigma)),
\end{equation*}
and if $Z(\bsigma')\subset Z(\bsigma)$,
with $\bsigma'$ an $n$-tuple, we set
\begin{equation*}
    HZ_*(\bsigma,\bsigma') := H_{*+(n-1)(d+1)}(Z(\bsigma),Z(\bsigma')).
\end{equation*}

For $m\in\N^*$ and $a\leq b$ in $[-m,m]$,
one has $F_{\bsigma_{m,b}} \leq F_{\bsigma_{m,a}}$ so
$Z(\bsigma_{m,a})\subset Z(\bsigma_{m,b})$ and
we can set
\begin{equation*}
    G_*^{(a,b)}(\bsigma,m) :=
    HZ_*(\bsigma_{m,b},\bsigma_{m,a}),
\end{equation*}
when $a$ and $b$ are not action values of $\bsigma$.
In \cite[Section~5.4]{periodicCPd},
we remarked that this homology group can be naturally identified to
the homology of sublevel sets
of a map, that is
\begin{equation*}
    G_*^{(a,b)}(\bsigma,m) \simeq H_{*+(n-1)(d+1)}
    \left(\left\{\action \leq b\right\},\left\{\action\leq a\right\}\right),
\end{equation*}
for some $C^1$-map $\action:M\to\R$ 
defined on some manifold $M$
that is smooth in the neighborhood
of its critical points.
The function $\action$ is some kind of finite-dimensional action:
critical points of $\action$ are in one-to-one correspondence with
capped fixed points of $\varphi_1$ with action value inside $[-m,m]$.
This correspondence sends critical value to action value and
Morse index up to a $(n-1)(d+1)$ shift in degree to
the Conley-Zehnder index.
More generally, the local homology of $\action$ (up to the same shift
in degree) is isomorphic to the local Floer homology of the corresponding
capped orbit.
Let us denote by $\lochom_*(f;x)$ the local homology
of the critical point $x$ of a map $f$.
According to the above discussion, we can define up to isomorphism
\begin{equation*}
    \lochom_*(\bsigma;z,t) \simeq \lochom_*\left(\widehat{F}_{\bsigma_{m,t}};\zeta\right)
    \simeq \lochom_{*+(n-1)(d+1)}\left(\action;(\zeta,t)\right),
\end{equation*}
where $\zeta\in\CP^{n(d+1)-1}$ is the critical point of
$\widehat{F}_{\bsigma_{m,t}}$ associated with the fixed point $z\in\CP^d$
of action $t\in[-m,m]$ (see \cite[Section~5.5 and 5.7]{periodicCPd}
for details).
The independence on $m$ of this definition can also easily be deduced from
the isomorphism induced by $\theta_m^{m+1}$ (defined in Section~\ref{se:direct})
on the local homologies.
Local homologies $\lochom_*(\bsigma;z,t)$ and $\lochom_*(\bsigma;z,t+1)$ are isomorphic
up to a $2(d+1)$ shift in degree so we will not specify the choice
of representative $t\in\R$ when the grading is irrelevant.
One can prove it without using the isomorphism with local Floer homology
as was done in \cite[Proposition~5.10]{periodicCPd} or by using the
isomorphism induced by (\ref{iso:periodic}) in local homology.
We can now define precisely (\ref{eq:N}) for a choice of tuple $\bsigma$
by
\begin{equation*}
    N(\bsigma;\F) := \sum_{z\in\mathrm{Fix}(\varphi_1)}
    \dim \lochom_*(\bsigma;z;\F) \in \N.
\end{equation*}

We recall that an integer $k\in\N^*$ is said to be admissible for $\varphi$
at a fixed point $z$ if $\lambda^k\neq 1$ for all eigenvalues $\lambda\neq 1$
of $\ud\varphi(z)$.
Until the end of the section, $\varphi$ is associated with a tuple $\bsigma$
and the periodic points of $\varphi$ are isolated in order to simplify
the statements.
The following proposition was proved by Ginzburg-Gürel in \cite{GG10}
for the local Floer homology.
\begin{prop}\label{prop:gromollmeyer}
    Let $k\in\N^*$ be an admissible iteration of $\varphi$ at
    the fixed point $z$. Then as graded $R$-modules,
    \begin{equation*}
        \lochom_*(\bsigma^k;z) \simeq \lochom_{*-i_k}(\bsigma;z),
    \end{equation*}
    for some shift in degree $i_k\in\Z$.
\end{prop}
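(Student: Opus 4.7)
The plan is to reduce the statement to a classical finite-dimensional Gromoll--Meyer splitting argument, mirroring the Floer-theoretic proof of Ginzburg--Gürel but performed on the projectivized generating function itself.

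First, I would use the identification recalled at the end of Section~\ref{se:homologysublevel},
\begin{equation*}
    \lochom_*(\bsigma;z,t) \simeq \lochom_*\bigl(\wh{F}_{\bsigma_{m,t}};\zeta\bigr),
\end{equation*}
to recast the problem as a statement about local Morse homology of the projectivized generating function at a critical point $\zeta$ corresponding to $z$. Applying the same identification to $\bsigma^k$ yields a critical point $\zeta^{(k)}$ of $\wh{F}_{\bsigma^k_{m,t}}$ also corresponding to $z$ (the fixed point $z$ of $\varphi_1$ is, in particular, a fixed point of $\varphi_1^k$). The goal is then to establish
\begin{equation*}
    \lochom_*\bigl(\wh{F}_{\bsigma^k_{m,t}};\zeta^{(k)}\bigr)
    \simeq \lochom_{*-i_k}\bigl(\wh{F}_{\bsigma_{m,t}};\zeta\bigr),
\end{equation*}
for a suitable integer shift $i_k$.

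Next, I would compute the Hessians at $\zeta$ and $\zeta^{(k)}$. Using the explicit formula
$F_\bsigma(v_1,\dotsc,v_n) = \sum_k f_k\bigl(\tfrac{v_k+v_{k+1}}{2}\bigr) + \tfrac12 \la v_k, i v_{k+1}\ra$,
the Hessian at a critical point has a tridiagonal block-circulant structure whose kernel can be expressed in terms of the linearized map: modulo the tangent direction to the $\C$-line of critical points, the nullspace of the Hessian at the lift of $\zeta$ is canonically isomorphic to the generalized $1$-eigenspace of $\ud\varphi_1(z)$, and the nullspace at the lift of $\zeta^{(k)}$ is canonically isomorphic to the generalized $1$-eigenspace of $(\ud\varphi_1(z))^k$. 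Admissibility of $k$ is exactly the statement that these two generalized $1$-eigenspaces coincide; in particular, the nullities of the two Hessians (on the quotient by the $\C$-direction) agree.

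Finally, I would apply the Gromoll--Meyer splitting lemma to each of $\wh{F}_{\bsigma_{m,t}}$ near $\zeta$ and $\wh{F}_{\bsigma^k_{m,t}}$ near $\zeta^{(k)}$, writing them locally as $Q(u) + G(w)$ where $Q$ is a non-degenerate quadratic form and $G$ is defined on the nullspace and has an isolated critical point at $0$. By the identification of nullspaces above, the two $G$-factors are defined on spaces of equal dimension and, after tracing through the construction of the splitting, are given by the same dynamical data (the return map of $\varphi_1$ restricted to the generalized $1$-eigenspace), so their local homologies coincide. The local homology at a critical point of a function of the form $Q+G$ is the tensor product of the local homology of $G$ with a degree shift equal to the Morse index of $Q$; therefore the two total local homologies differ only by the degree shift $i_k$ equal to the difference of the Morse indices of the non-degenerate quadratic parts for $\bsigma^k$ and $\bsigma$. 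The main obstacle I expect is the careful identification of the nullspace of the tridiagonal block Hessian with the generalized $1$-eigenspace of the linearized map and the verification that the reduced $G$-factor is genuinely iteration-independent under admissibility; once this is in place, the rest of the argument is a mechanical application of the classical splitting lemma.
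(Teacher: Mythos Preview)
Your overall strategy is correct and is the same as the paper's: both reduce to the Gromoll--Meyer shifting theorem applied to the projectivized generating functions. However, the step you flag as the ``main obstacle''---verifying that the reduced $G$-factor is iteration-independent under admissibility---is genuinely the heart of the matter, and your proposal does not say how to carry it out. Arguing abstractly that the two reduced functions are ``given by the same dynamical data'' is not enough: a priori the Gromoll--Meyer splittings for $\widehat{F}_{\bsigma}$ and $\widehat{F}_{\bsigma^k}$ live on different spaces and there is no canonical identification of the $G$-factors without further input.

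The paper's proof supplies exactly this missing input in one line, via the diagonal embedding $\iota:[\mathbf{v}]\mapsto[\mathbf{v}:\cdots:\mathbf{v}]$. The key identity is $\widehat{F}_{\bsigma^k}\circ\iota = \widehat{F}_\bsigma$ (immediate from the cyclic structure of $F_{\bsigma^k}$). One then checks---using the block structure of the Hessian you describe---that if $M$ is a characteristic submanifold for $\widehat{F}_\bsigma$ at $[\mathbf{v}_0]$, then $\iota(M)$ is a characteristic submanifold for $\widehat{F}_{\bsigma^k}$ at $[\mathbf{v}_0:\cdots:\mathbf{v}_0]$ precisely when $k$ is admissible. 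Since the restriction of $\widehat{F}_{\bsigma^k}$ to $\iota(M)$ is literally $\widehat{F}_\bsigma|_M$, the shifting theorem gives the conclusion immediately, with no separate comparison of $G$-factors needed. This is the idea you are missing; once you have it, your Hessian computation becomes the verification that $\iota(M)$ has the correct dimension (equal nullities), and the rest collapses.

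A minor point: the nullspace of the Hessian corresponds to the ordinary $1$-eigenspace of $\ud\varphi_1(z)$, not the generalized one; admissibility is precisely the condition $\ker(\ud\varphi_1(z)^k-\id)=\ker(\ud\varphi_1(z)-\id)$.
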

In our setting, one can prove this result 
by directly applying
the shifting theorem of Gromoll-Meyer \cite[\S3]{GM69b}.
Let us give the proof when $k$ is odd.

\begin{proof}
One can assume that the fixed point $z$ has action $0$
so that $\lochom_*(\bsigma^k;z)$ is isomorphic
to the local homology of $\widehat{F}_{\bsigma^k}$ at
$[\mathbf{v}_0:\cdots:\mathbf{v}_0]$,
where $[\mathbf{v}_0]\in\CP^N$ is the critical point of
$\widehat{F}_\bsigma$ associated with $z$.
Let $M\subset \CP^N$ be a characteristic submanifold for
$\widehat{F}_\bsigma$ at $[\mathbf{v}_0]$.
Then the image of $M$ under the embedding
$\iota:[\mathbf{v}]\mapsto[\mathbf{v}:\cdots:\mathbf{v}]$
is a characteristic submanifold for $\widehat{F}_{\bsigma^k}$
if and only if $k$ is admissible
(see \cite[Equation~(11) and above]{periodicCPd}).
According to the shifting theorem,
the local homology of a function at a given point
is isomorphic to the local homology of the restriction of
this function to a characteristic submanifold at the given point
up to a shift in degree.
Since $\widehat{F}_{\bsigma^k}\circ\iota = \widehat{F}_\bsigma$, the conclusion
follows.
\end{proof}

This proof needs small changes when $k$ is even
(see Section~\ref{se:case2} for an idea of the case $k=2$), but it
will not be needed to prove the following corollary.

\begin{cor}\label{cor:gromollmeyer}
    For every fixed point $z$ of $\varphi$, there exists $B>0$
    such that, for all prime $p$
    \begin{equation*}
        \dim\lochom_*(\bsigma^p;z;\F_p) < B.
    \end{equation*}
\end{cor}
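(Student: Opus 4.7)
The plan is to split the set of primes into those admissible at $z$ (in the sense recalled just before Proposition~\ref{prop:gromollmeyer}) and a finite exceptional set. A non-identity eigenvalue $\lambda$ of $\ud\varphi(z)$ satisfies $\lambda^p=1$ for a prime $p$ only when $\lambda$ is a root of unity of order exactly $p$; since $\ud\varphi(z)$ has only finitely many eigenvalues, only finitely many primes $p$ fail to be admissible at $z$. Denote this finite exceptional set by $P$.

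For every admissible prime $p\notin P$, I would apply Proposition~\ref{prop:gromollmeyer} with coefficients in $\F_p$ to obtain
$$\dim_{\F_p}\lochom_*(\bsigma^p;z;\F_p)=\dim_{\F_p}\lochom_*(\bsigma;z;\F_p).$$
The point is that the right-hand side is bounded uniformly in $p$. Indeed, since $z$ is an isolated fixed point of $\varphi$, the integral local homology $\lochom_*(\bsigma;z;\Z)$ is a finitely generated abelian group, isomorphic to $\Z^r\oplus T$ with $T$ torsion splitting as a direct sum of $t$ cyclic summands. The universal coefficient theorem then yields $\dim_{\F_p}\lochom_*(\bsigma;z;\F_p)\leq r+2t$ for every prime $p$, the factor $2$ accounting for both the $\otimes\F_p$ and the $\mathrm{Tor}(\cdot,\F_p)$ contributions.

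For each of the finitely many $p\in P$, the standing assumption that periodic points of $\varphi$ are isolated ensures that the corresponding critical point of $\widehat{F}_{\bsigma^p_{m,t}}$ is isolated, so $\lochom_*(\bsigma^p;z;\F_p)$ is finite-dimensional. Setting $B$ to be one more than the maximum of $r+2t$ and these finitely many individual dimensions gives the desired uniform bound. The main content is Proposition~\ref{prop:gromollmeyer} together with the universal coefficient theorem; the rest is bookkeeping, so I do not expect a substantive obstacle.
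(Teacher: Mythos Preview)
Your proof is correct and follows essentially the same approach as the paper: both combine Proposition~\ref{prop:gromollmeyer} with the universal coefficient theorem, the only cosmetic difference being that the paper applies the proposition over $\Z$ and then invokes UCT, whereas you apply it over $\F_p$ and bound $\dim_{\F_p}\lochom_*(\bsigma;z;\F_p)$ via UCT on the integral local homology. Your handling of the finitely many non-admissible primes and the explicit bound $r+2t$ are more detailed than the paper's terse argument, but the content is the same.
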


\begin{proof}
    Applying Proposition~\ref{prop:gromollmeyer} with $R:=\Z$,
    the $\lochom_*(\bsigma^p;z;\Z)$'s are isomorphic up to a shift in degree
    for sufficiently large prime numbers $p$.
    Since these $\Z$-modules are finitely generated, the conclusion
    is a straightforward application of the universal coefficient theorem.
\end{proof}

\section{Generating functions homology of Hamiltonian diffeomorphisms of $\CP^d$}
\label{se:gfhomology}

\subsection{Composition of sublevel sets of generating functions}

Given an odd number $n\in\N$, let $A_n$ be the linear automorphism
of $(\C^{d+1})^n$ such that $\mathbf{w} = A_n\mathbf{v}$
with $w_k = \frac{v_k+v_{k+1}}{2}$.
We will often omit $A_n$ in our changes of variables and talking about $w$-variables.
Let $Q_n : (\C^{d+1})^n \to \R$ be the $S^1$-invariant quadratic form
\begin{equation*}
    Q_n(\mathbf{v}) :=
    F_{\bepsilon^n}(\mathbf{v}) =
    \frac{1}{2}\sum_{k=1}^n \la v_k,iv_{k+1} \ra =
    2\sum_{k=1}^n \sum_{l=1}^{k-1} (-1)^{k+l}\la w_k,i w_l \ra,
\end{equation*}
with conventions $v_{n+1} := v_1$ and $w_{n+1} := w_1$.

The following proposition is a direct consequence of the fact that
$Q_n$ is both a quadratic form and a generating function of the identity.
\begin{prop}\label{prop:Qn}
    The quadratic form $Q_n$ has nullity $2(d+1)$.
    Moreover
    \begin{equation*}
        Q_n(v_1,v_2,\ldots,v_n) = - Q_n(v_1,v_n,v_{n-1},\ldots,v_2)
    \end{equation*}
    so that
    \begin{equation*}
        \ind Q_n = \coind Q_n = (n-1)(d+1).
    \end{equation*}
\end{prop}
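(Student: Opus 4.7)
The plan is to prove the three claims in turn: first the nullity, then the symmetry identity, then deduce the index/coindex values.

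For the nullity, I see two clean routes. The most conceptual one uses the fact that $Q_n = F_{\bepsilon^n}$ is a generating function of the identity of $\C^{d+1}$ (by the definition in Section~\ref{se:gf}). Since the critical points of any generating function are in bijection with the fixed points of the associated Hamiltonian diffeomorphism, the critical locus of $Q_n$ has real dimension $2(d+1)$. Because $Q_n$ is a quadratic form its critical locus coincides with its kernel, so the nullity is exactly $2(d+1)$. I would include a direct computational verification as well: writing $\omega(a,b) := \la a, ib\ra$, one has $Q_n(\mathbf{v}) = \tfrac12 \sum_k \omega(v_k,v_{k+1})$, and a short calculation gives $\nabla_{v_k} Q_n = \tfrac{i}{2}(v_{k+1}-v_{k-1})$. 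The equations $v_{k+1}=v_{k-1}$ (indices mod $n$) partition $\{1,\ldots,n\}$ into two orbits of the permutation $k\mapsto k+2$; but since $n$ is odd this permutation is a single $n$-cycle, forcing $v_1=v_2=\cdots=v_n$. The kernel is therefore the diagonal $\C^{d+1}$, of real dimension $2(d+1)$.

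For the sign-reversal identity, I would simply substitute and use antisymmetry of $\omega$. The right-hand side is $\tfrac12[\omega(v_1,v_n)+\omega(v_n,v_{n-1})+\cdots+\omega(v_2,v_1)]$; each term is $-\omega(v_j,v_{j+1})$ for exactly one index $j\in\{1,\ldots,n\}$ (including the cyclic term $\omega(v_n,v_1)$), giving $-Q_n(v_1,\ldots,v_n)$.

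For the index and coindex, the linear involution $\phi(v_1,v_2,\ldots,v_n) := (v_1,v_n,v_{n-1},\ldots,v_2)$ satisfies $Q_n\circ \phi = -Q_n$, so the diagonalization of $Q_n$ with $p$ positive and $q$ negative eigenvalues is carried by $\phi$ to a diagonalization with $q$ positive and $p$ negative eigenvalues. Hence $\ind Q_n = \coind Q_n$. Combining with the nullity $2(d+1)$ and the total real dimension $2n(d+1)$, one has $\ind Q_n + \coind Q_n = 2(n-1)(d+1)$, and therefore $\ind Q_n = \coind Q_n = (n-1)(d+1)$.

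I do not foresee a real obstacle; the only slightly delicate point is the use of $n$ odd in the nullity computation, which is essential (for $n$ even the equations $v_{k+1}=v_{k-1}$ would only force $v_1=v_3=\cdots$ and $v_2=v_4=\cdots$ independently, giving nullity $4(d+1)$ and reflecting that $Q_n$ would no longer be a generating function).
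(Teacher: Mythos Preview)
Your proof is correct and follows exactly the approach the paper indicates: the paper only remarks that the proposition ``is a direct consequence of the fact that $Q_n$ is both a quadratic form and a generating function of the identity,'' and you have spelled out precisely this argument for the nullity, together with the straightforward verification of the sign-reversal identity and the standard deduction of $\ind Q_n = \coind Q_n$. Your additional explicit computation of the kernel (and the remark on why $n$ odd is essential) is a useful complement but not a different route.
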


We denote by $\widetilde{B}_{n,m}:(\C^{d+1})^n \times (\C^{d+1})^m \to (\C^{d+1})^{m+n+1}$
the $\C$-linear map
\begin{equation*}
    \widetilde{B}_{n,m}(\mathbf{w},\mathbf{w}') :=
\left(\mathbf{w},\sum_{k=1}^m (-1)^{k+1} w'_k ,\mathbf{w}'\right),
\end{equation*}
and we denote by $B_{n,m}:\CP^{(d+1)n-1}*\CP^{(d+1)m-1}\to \CP^{(d+1)(n+m+1)-1}$
the associated projective map,
where $*$ denotes the projective join (see Appendix~\ref{se:pj}).
A straightforward computation gives the following proposition.

\begin{prop}
    Given odd integers $n,m\in\N$, for all $\mathbf{w}\in(\C^{d+1})^n$
    and $\mathbf{w}'\in(\C^{d+1})^m$, one has in $w$-variables,
\begin{equation*}
    Q_{n+m+1}\left(\widetilde{B}_{n,m}(\mathbf{w},\mathbf{w}')\right)
    = Q_n(\mathbf{w}) + Q_m(\mathbf{w}').
\end{equation*}
\end{prop}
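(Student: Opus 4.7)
The plan is to expand $Q_{n+m+1}$ on the concatenated tuple and verify that the cross terms cancel exactly. Write $\mathbf{W}=\widetilde{B}_{n,m}(\mathbf{w},\mathbf{w}')$ and $S:=\sum_{j=1}^m(-1)^{j+1}w'_j$, so that $W_j=w_j$ for $1\le j\le n$, $W_{n+1}=S$, and $W_{n+1+j}=w'_j$ for $1\le j\le m$. Using the $w$-variable formula provided by Proposition~\ref{prop:Qn},
\[
Q_{n+m+1}(\mathbf{W})=2\sum_{1\le l<k\le n+m+1}(-1)^{k+l}\langle W_k,iW_l\rangle,
\]
I would partition the index pairs $(l,k)$ into three groups: those entirely in $\{1,\dots,n\}$, those entirely in $\{n+2,\dots,n+m+1\}$, and the remaining ``mixed'' pairs either touching the middle slot $n+1$ or crossing between the two blocks.

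The first group reproduces $Q_n(\mathbf{w})$ verbatim. For the second, the substitution $l=n+1+l'$, $k=n+1+k'$ preserves the parity of $k+l$ and produces $Q_m(\mathbf{w}')$ after relabelling. The mixed contribution splits into three pieces. First, pairs with $l\le n$ and $k=n+1+k'$ for some $k'\ge 1$ give
\[
\sum_{l=1}^n\sum_{k'=1}^m 2(-1)^{n+1+k'+l}\langle w'_{k'},iw_l\rangle = -\sum_{l=1}^n 2(-1)^{n+1+l}\langle S,iw_l\rangle,
\]
using the identity $\sum_{k'=1}^m(-1)^{k'}w'_{k'}=-S$. Second, the pairs with $k=n+1$ and $l\le n$ contribute exactly $\sum_{l=1}^n 2(-1)^{n+1+l}\langle S,iw_l\rangle$, which cancels the previous piece. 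Third, the pairs with $l=n+1$ and $k=n+1+k'$, $k'\ge 1$, give $2\sum_{k'=1}^m(-1)^{k'}\langle w'_{k'},iS\rangle=-2\langle S,iS\rangle=0$ by skew-symmetry of $\langle \cdot,i\cdot\rangle$.

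The only subtle point is the consistent bookkeeping of the sign $(-1)^{k+l}$ through the reindexings; the specific alternating form of $W_{n+1}=\sum_j(-1)^{j+1}w'_j$ built into the definition of $\widetilde{B}_{n,m}$ is exactly what is needed to kill the two mixed sums against the $w_l$'s, so there is no real obstacle beyond careful accounting.
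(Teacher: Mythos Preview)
Your argument is correct and is precisely the ``straightforward computation'' the paper invokes without giving details: splitting the double sum for $Q_{n+m+1}$ in $w$-variables into the two diagonal blocks and the mixed terms, and checking that the alternating middle entry $S=\sum_j(-1)^{j+1}w'_j$ makes the cross terms cancel (the last piece vanishing by $\langle S,iS\rangle=0$). There is nothing to add or correct.
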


\begin{cor}\label{cor:Bmn}
    Given tuples $\bsigma$, $\bsigma'$ of odd respective sizes $n$ and $m$,
    one has
    \begin{equation*}
        F_{(\bsigma,\bepsilon,\bsigma')}
        \left(\widetilde{B}_{n,m}(\mathbf{w},\mathbf{w}')\right)
        = F_\bsigma (\mathbf{w}) + 
        F_{\bsigma'} (\mathbf{w}').
    \end{equation*}
    Therefore, $\widetilde{B}_{n,m}$ induces an injective map
    \begin{equation*}
        \left\{ F_\bsigma \leq a \right\} \times \left\{ F_{\bsigma'} \leq b \right\}
        \to \left\{ F_{(\bsigma,\id,\bsigma')} \leq a+b \right\}
    \end{equation*}
    by restriction and $B_{n,m}$
    induces a map
    \begin{equation*}
        Z(\bsigma)*Z(\bsigma') \to Z(\bsigma,\bepsilon,\bsigma').
    \end{equation*}
\end{cor}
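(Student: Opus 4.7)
My plan is to reduce the identity to the preceding quadratic-form proposition by exploiting the additive structure of $F_{\bsigma}$. The key observation is that in $w$-variables one has
\[
F_{\bsigma}(\mathbf{w}) \;=\; \sum_{k=1}^{n} f_k(w_k) \;+\; Q_n(\mathbf{w}),
\]
so the elementary generating functions $f_k$ contribute a pointwise sum in the $w$-coordinates, while everything else is packaged in $Q_n$. First I would write the analogous expression for $F_{(\bsigma,\bepsilon,\bsigma')}$, observing that the middle slot $\bepsilon=(\id)$ contributes the elementary generating function identically zero (with our normalisation $f(0)=0$, the function generating $\id$ must be constant, hence $0$). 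Plugging in $\widetilde{B}_{n,m}(\mathbf{w},\mathbf{w}')=(w_1,\dotsc,w_n,W_{n+1},w'_1,\dotsc,w'_m)$, the $f$-sum splits into $\sum_{k}f_k(w_k)+0+\sum_{k}f'_k(w'_k)$ because the middle coordinate is suppressed, and the quadratic piece becomes $Q_{n+m+1}(\widetilde{B}_{n,m}(\mathbf{w},\mathbf{w}'))=Q_n(\mathbf{w})+Q_m(\mathbf{w}')$ by the preceding proposition. Recollecting terms gives exactly $F_{\bsigma}(\mathbf{w})+F_{\bsigma'}(\mathbf{w}')$, which is the claimed identity.

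For the induced maps, injectivity of $\widetilde{B}_{n,m}$ is immediate since the first $n$ and last $m$ coordinates of the image recover $\mathbf{w}$ and $\mathbf{w}'$ respectively (the middle coordinate being a linear function of $\mathbf{w}'$). The sublevel-set statement then follows at once: if $F_{\bsigma}(\mathbf{w})\leq a$ and $F_{\bsigma'}(\mathbf{w}')\leq b$, then $F_{(\bsigma,\bepsilon,\bsigma')}(\widetilde{B}_{n,m}(\mathbf{w},\mathbf{w}'))=F_{\bsigma}(\mathbf{w})+F_{\bsigma'}(\mathbf{w}')\leq a+b$. For the projective version I would use that $\widetilde{B}_{n,m}$ is $\C$-linear and that the $F$'s are $2$-homogeneous and $S^1$-invariant: given $[\mathbf{w}]\in Z(\bsigma)$ and $[\mathbf{w}']\in Z(\bsigma')$, any point of the join $Z(\bsigma)*Z(\bsigma')$ is represented by a pair $(\lambda\mathbf{w},\mu\mathbf{w}')$ with $\lambda,\mu\in\C$ not both zero, and $2$-homogeneity yields $F_{\bsigma}(\lambda\mathbf{w})=|\lambda|^2F_{\bsigma}(\mathbf{w})\leq 0$ together with the analogous inequality for $F_{\bsigma'}$. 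The affine identity then gives $F_{(\bsigma,\bepsilon,\bsigma')}(\widetilde{B}_{n,m}(\lambda\mathbf{w},\mu\mathbf{w}'))\leq 0$, which is exactly the statement that $B_{n,m}$ takes the join into $Z(\bsigma,\bepsilon,\bsigma')$. There is no real obstacle: the content of the corollary is entirely encoded in the preceding quadratic-form identity together with the additivity of the $f_k$-sums under concatenation of tuples.
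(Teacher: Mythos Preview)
Your proof is correct and follows essentially the same approach as the paper: both use the decomposition $F_\bsigma(\mathbf{w}) = \sum_k f_k(w_k) + Q_n(\mathbf{w})$ in $w$-variables, note that the middle $\id$ contributes zero, and invoke the preceding proposition on $Q_{n+m+1}\circ\widetilde{B}_{n,m}$. You supply a bit more detail on the induced sublevel-set and projective-join maps than the paper does, but the argument is the same.
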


\begin{proof}
    This is a direct consequence of the form that takes $F_\bsigma$ in $w$-variables:
    \begin{equation*}
        F_\bsigma(\mathbf{w}) = \sum_{k=1}^n f_k(w_k) + Q_n(\mathbf{w}).
    \end{equation*}
    Therefore,
    \begin{eqnarray*}
        F_{(\bsigma,\bepsilon,\bsigma')}
        \left(\widetilde{B}_{n,m}(\mathbf{w},\mathbf{w}')\right)
        &=& \sum_{k=1}^n f_k(w_k) + \sum_{l=1}^m f'_l(w'_l) +
        Q_{n+m+1}\left(\widetilde{B}_{n,m}(\mathbf{w},\mathbf{w}')\right) \\
        &=& \sum_{k=1}^n f_k(w_k) + Q_n(\mathbf{w}) 
        + \sum_{l=1}^m f'_l(w'_l) + Q_m(\mathbf{w}') \\
        &=& F_\bsigma (\mathbf{w}) + 
        F_{\bsigma'} (\mathbf{w}').
    \end{eqnarray*}
\end{proof}

By making use of the homology projective join defined in Appendix~\ref{se:pj},
we are now in position to define a special map in homology involving two
different Hamiltonian flows and their composition.
Let us fix 2 tuples $\bsigma$, $\bsigma'$ of odd respective sizes $n$ and $n'$.
According to Corollary~\ref{cor:Bmn}, the map $B_{n,n'}$ induces a natural
morphism $H_*(Z(\bsigma)*Z(\bsigma')) \to H_*(Z(\bsigma,\bepsilon,\bsigma'))$.
In Appendix~\ref{se:pj}, we define a morphism
\begin{equation*}
    \pj_*: H_*(A\times B) \to H_{*+2}(A*B)
\end{equation*}
called the homology projective join.
The composition of this morphism with the homology projective join defines
a natural morphism
\begin{equation*}
    HZ_*(\bsigma)\otimes HZ_*(\bsigma') \to HZ_{*-2d}((\bsigma,\bepsilon,\bsigma')).
\end{equation*}
It generalizes to the relative case $Z(\bsigma'')\subset Z(\bsigma')$:
\begin{equation*}
    HZ_*(\bsigma)\otimes HZ_*(\bsigma',\bsigma'') \to
    HZ_{*-2d}((\bsigma,\bepsilon,\bsigma'),(\bsigma,\bepsilon,\bsigma''));
\end{equation*}
the relative $HZ_*$ could be in the left hand side of the tensor product as
well, as long as one of the two $HZ_*$'s is an absolute homology group.
In symbols, we will write this map as $\alpha\otimes\beta \mapsto
\alpha \compi \beta$.
The naturality of these morphisms under inclusion maps and boundary maps
follows directly from the naturality of the homology projective join.

In particular, the following diagram commutes
\begin{equation}\label{dia:compipj}
    \begin{gathered}
        \xymatrix{
            HZ_*(\bsigma)\otimes HZ_*(\bsigma') \ar[r]^-{\compi} \ar[d]
            & HZ_{*-2d}((\bsigma,\bepsilon,\bsigma')) \ar[d]\\
            H_{*+r}(\CP^N)\otimes H_{*+r'}(\CP^{N'}) \ar[r]^-{\pj_*}
            & H_{*+r+r'+2}(\CP^{N''})
        }
    \end{gathered},
\end{equation}
where the vertical arrows are induced by inclusions,
$r:=(n-1)(d+1)$, $N:=n(d+1)-1$, $r':= (n'-1)(d+1)$,
$N':=n'(d+1)-1$, $N'':=(n+n'+1)(d+1)-1$ and
we see $\CP^N$ and $\CP^{N'}$ as the disjoint subspaces included in $\CP^{N''}$
\emph{via} $[\mathbf{w}]\mapsto [\mathbf{w}:0]$ and
$[\mathbf{w}']\mapsto [0:\mathbf{w}']$.
The commutativity of this diagram follows from the naturality of $\pj_*$
and the fact that $B_{n,n'}$ is homotopic to
$[\mathbf{w}:\mathbf{w}']\mapsto [\mathbf{w}:0:\mathbf{w}']$.

\begin{prop}\label{prop:associativityComposition}
    Composition morphisms are associative,
    that is the following diagram commutes:
    \begin{equation*}
        \begin{gathered}
            \xymatrix{
                HZ_*(\bsigma)\otimes HZ_*(\bsigma')\otimes HZ_*(\bsigma'')
                \ar[d] \ar[r]
                & HZ_*(\bsigma)\otimes HZ_{*-2d}((\bsigma',\bepsilon,\bsigma''))
                \ar[d] \\
                HZ_{*-2d}((\bsigma,\bepsilon,\bsigma'))\otimes HZ_*(\bsigma'')
                \ar[r]
                & HZ_{*-4d}((\bsigma,\bepsilon,\bsigma',\bepsilon,\bsigma''))
            }
        \end{gathered}.
    \end{equation*}
    In symbols, given $\alpha\in HZ_*(\bsigma)$,
    $\beta\in HZ_*(\bsigma')$ and $\gamma\in HZ_*(\bsigma'')$,
    \begin{equation*}
        (\alpha\compi\beta)\compi\gamma = \alpha\compi(\beta\compi\gamma).
    \end{equation*}
    This is also true for the relative case where
    one of the initial groups (\emph{e.g.} $HZ_*(\bsigma'')$) is
    replaced by a relative homology group (\emph{e.g.} $HZ_*(\bsigma'',\bsigma^{(3)})$)
    while the other groups
    are still absolute homology groups.
\end{prop}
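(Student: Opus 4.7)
The plan is to reduce the statement to two ingredients: the associativity and naturality of the homology projective join $\pj_*$ established in Appendix~\ref{se:pj}, and a direct linear-algebra identity showing that the two possible iterations of the maps $\widetilde{B}_{n,m}$ coincide. Since $\alpha\compi\beta$ is by definition $(B_{n,n'})_*\circ\pj_*(\alpha\otimes\beta)$, after unpacking both sides and applying naturality of $\pj_*$ to pull the first push-forward through the outer join, both $(\alpha\compi\beta)\compi\gamma$ and $\alpha\compi(\beta\compi\gamma)$ become
\[
(B_{\mathrm{out}})_*\circ(B_{\mathrm{in}}*\id)_*\circ\pj^{(2)}_*(\alpha\otimes\beta\otimes\gamma),
\]
where $\pj^{(2)}_*$ denotes the iterated projective join and the pairs $(B_{\mathrm{out}},B_{\mathrm{in}})$ are respectively $(B_{n+n'+1,n''},B_{n,n'})$ and $(B_{n,n'+n''+1},B_{n',n''})$. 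The associativity of $\pj^{(2)}_*$ under the canonical identification $(A*B)*C\cong A*(B*C)$ provided by Appendix~\ref{se:pj} allows us to regard the two outputs of the double-join as the same element of the triple join, so the remaining task is to compare the two outer push-forwards.

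The heart of the argument is then the verification that $\widetilde{B}_{n+n'+1,n''}\circ(\widetilde{B}_{n,n'}\times\id)$ and $\widetilde{B}_{n,n'+n''+1}\circ(\id\times\widetilde{B}_{n',n''})$ coincide as $\C$-linear maps. A direct computation shows that both send $(\mathbf{w},\mathbf{w}',\mathbf{w}'')$ to $(\mathbf{w},\,s',\,\mathbf{w}',\,s'',\,\mathbf{w}'')$, where $s':=\sum_{k=1}^{n'}(-1)^{k+1}w'_k$ and $s'':=\sum_{k=1}^{n''}(-1)^{k+1}w''_k$. The only subtle point is on the right: the middle slot inserted by $\widetilde{B}_{n,n'+n''+1}$ equals $\sum_{k=1}^{n'+n''+1}(-1)^{k+1}u'_k$ with $\mathbf{u}':=(\mathbf{w}',s'',\mathbf{w}'')$, and a short sign count shows that the $s''$-term contributes $(-1)^{n'}s''$ while the $w''$-block contributes $-(-1)^{n'}s''$, so they cancel and leave exactly $s'$. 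Consequently the projective maps $B_{\mathrm{out}}\circ(B_{\mathrm{in}}*\id)$ agree in the two orderings, and so do the induced maps in homology, yielding $(\alpha\compi\beta)\compi\gamma=\alpha\compi(\beta\compi\gamma)$.

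For the relative case, the same argument applies verbatim: naturality of $\pj_*$ extends to maps of pairs, and the linear identity above is insensitive to which of $\bsigma,\bsigma',\bsigma''$ carries the relative data, so replacing the relevant absolute group $HZ_*(\bsigma^{(\bullet)})$ by $HZ_*(\bsigma^{(\bullet)},\bsigma^{(\bullet)\prime})$ changes nothing in the chain of equalities. The main obstacle in this plan is genuinely the associativity and naturality of $\pj_*$, which is non-trivial and is the reason Appendix~\ref{se:pj} exists; granting that, the body of the proof reduces to the elementary sign cancellation above.
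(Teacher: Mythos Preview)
Your proof is correct and follows essentially the same approach as the paper: verify the linear identity $\widetilde{B}_{n+n'+1,n''}\circ(\widetilde{B}_{n,n'}\times\id)=\widetilde{B}_{n,n'+n''+1}\circ(\id\times\widetilde{B}_{n',n''})$, then combine naturality of $\pj_*$ (diagram~(\ref{dia:natural})) with the associativity of $\pj_*$ (Proposition~\ref{prop:pjassociativity}) to conclude. The only cosmetic slip is that in your displayed formula the inner map for the right-hand parenthesization should be $(\id*B_{n',n''})_*$ rather than $(B_{\mathrm{in}}*\id)_*$, but this does not affect the argument.
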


\begin{proof}
    We first remark that
    \begin{multline*}
        \widetilde{B}_{n+n'+1,n''}\left(\widetilde{B}_{n,n'}
        (\mathbf{w},\mathbf{w}'),\mathbf{w}''\right)
        =
        \left(\mathbf{w},\sum_{k=1}^{n'} (-1)^{k+1}w'_k ,
            \mathbf{w}',\sum_{l=1}^{n''} (-1)^{l+1}w''_l,
        \mathbf{w}''\right) \\
        = 
        \widetilde{B}_{n,n'+n''+1}\left(
        \mathbf{w},\widetilde{B}_{n',n''}(\mathbf{w}',\mathbf{w}'')\right),
        \quad \forall \mathbf{w},\mathbf{w}',\mathbf{w}'',
    \end{multline*}
    so that $B_{n+n'+1,n''}\circ (B_{n,n'}*\id) = B_{n,n'+n''+1}\circ (\id * B_{n',n''})$.
    Here, we denote by $f*g$ the map
    $(f*g)[a:b]:=[f(a):g(b)]$.
    Therefore,
    for all $\alpha\in HZ_*(\bsigma)$, $\beta\in HZ_*(\bsigma')$ and
    $\gamma\in HZ_*(\bsigma'')$ (or $\gamma\in HZ_*(\bsigma'',\bsigma^{(3)})$),
    \begin{multline*}
        (B_{n+n'+1,n''})_*\pj_*((B_{n,n'})_*(\pj_*(\alpha\times\beta)\times\gamma))\\
        = 
        (B_{n+n'+1,n''})_*(B_{n,n'}*\id)_*\pj_*(\pj_*(\alpha\times\beta)\times\gamma)\\
        =
        (B_{n,n'+n''+1})_*(\id * B_{n',n''})_*\pj_*(\pj_*(\alpha\times\beta)\times\gamma)\\
        =
        (B_{n,n'+n''+1})_*(\id * B_{n',n''})_*\pj_*(\alpha\times\pj_*(\beta\times\gamma))\\
        =
        (B_{n,n'+n''+1})_*\pj_*(\alpha\times (B_{n',n''})_*\pj_*(\beta\times\gamma)),
    \end{multline*}
    where we use the naturality of the homology projective join (\ref{dia:natural})
    to get the first and last identity, the previous remark to get the second equality
    and the associativity of the homology projective join
    (Proposition~\ref{prop:pjassociativity}) to get the third equality.
\end{proof}

\subsection{The direct system of $G^{(a,b)}_*(\bsigma)$}
\label{se:direct}

For a fixed $m\in\N$, the long exact sequence of the triple induces
inclusion and boundary morphisms fitting into a long exact sequence:
\begin{equation*}
    \cdots \xrightarrow{\partial_{*+1}}
    G_*^{(a,b)}(\bsigma,m) \to G_*^{(a,c)}(\bsigma,m) \to G_*^{(b,c)}(\bsigma,m)
    \xrightarrow{\partial_*} G_{*-1}^{(a,b)}(\bsigma,m) \to \cdots
\end{equation*}
where $-m\leq a\leq b\leq c\leq m$.
In order to precisely define these maps without reference of $m$ anymore,
we will define an isomorphism
\begin{equation}\label{iso:Gm}
    \theta_{m}^{m+1}:G_*^{(a,b)}(\bsigma,m) \to G_*^{(a,b)}(\bsigma,m+1),
\end{equation}
for $-m\leq a\leq b\leq m$, that commutes with the above mentioned inclusion 
and boundary morphisms
and define $G^{(a,b)}_*(\bsigma)$ as the direct limit of
the direct system induced by $(\theta_m^{m+1})_m$:
\begin{equation*}
    G_*^{(a,b)}(\bsigma) := \varinjlim G_*^{(a,b)}(\bsigma,m).
\end{equation*}
We will then have inclusion and boundary morphisms
\begin{equation*}
    \cdots \xrightarrow{\partial_{*+1}} 
    G_*^{(a,b)}(\bsigma) \to G_*^{(a,c)}(\bsigma) \to G_*^{(b,c)}(\bsigma)
    \xrightarrow{\partial_*} G_{*-1}^{(a,b)}(\bsigma) \to \cdots
\end{equation*}
for all $a\leq b\leq c$ that are not action values; one can thus set
\begin{equation*}
    G_*^{(-\infty,b)}(\bsigma) := \varprojlim G_*^{(a,b)}(\bsigma),
    \quad a\to -\infty,
\end{equation*}
and one can then define $G_*^{(-\infty,+\infty)}(\bsigma)$ by taking a direct limit in
a similar way. The inclusion and boundary maps thus extend to the extended real
numbers line
$\overline{\R} := \R\cup\{-\infty,+\infty\}$.

In order to define the isomorphism (\ref{iso:Gm}),
let us remark that for an odd $n\in\N$,
the space $Z(\bepsilon^n)$ retracts on the projectivization of the
maximal non-positive linear subspace
of $Q_n$ which is a $\CP^{N-1}$ with $N=(d+1)(n+1)/2$ according
to Proposition~\ref{prop:Qn}.
Therefore,
\begin{equation*}
    HZ_*(\bepsilon^n) =
    \bigoplus_{k=-(d+1)(n-1)/2}^d R a_k^{(n)}
    \simeq H_{*+(n-1)(d+1)}\left(\CP^{(d+1)(n+1)/2-1}\right),
\end{equation*}
where $a_k^{(n)}$ is the generator of degree $2k$ identified with
the class $[\CP^l]$ of appropriate degree $2l=2k+(n-1)(d+1)$ under the isomorphism
induced by the inclusion of a maximal complex projective subspace
of $Z(\bepsilon^n)$.
With the help of the composition defined in the previous section,
we now define (\ref{iso:Gm}) by
\begin{equation*}
    \theta_m^{m+1}(\alpha) := \alpha\compi a_d^{(n_0-1)}
    \in G_*^{(a,b)}(\bsigma,m+1),
    \quad
    \forall \alpha\in G_*^{(a,b)}(\bsigma,m).
\end{equation*}
This is formally well-defined since
\begin{equation*}
    HZ_*((\bsigma_{m,b},\bepsilon^{n_0}),(\bsigma_{m,a},\bepsilon^{n_0}))
    = G_*^{(a,b)}(\bsigma,m+1),
\end{equation*}
according to (\ref{eq:bdeltam}).

\begin{prop}\label{prop:compad}
    For an odd $n\in\N$,
    the morphism
    \begin{equation*}
        \left\{
        \begin{array}{c c c}
            HZ_*(\bsigma_{m,b},\bsigma_{m,a}) &\to &
            HZ_*((\bsigma_{m,b},\bepsilon^{n+1}),(\bsigma_{m,a},\bepsilon^{n+1}))\\
            \alpha &\mapsto& \alpha\compi a_d^{(n)}
        \end{array}
        \right.
    \end{equation*}
    is an isomorphism
    (and the same is true for $\alpha\mapsto a_d^{(n)}\compi \alpha$).
\end{prop}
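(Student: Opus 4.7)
The approach exploits the geometric meaning of $a_d^{(n)}$ as the fundamental class of a maximal projective subspace inside $Z(\bepsilon^n)$, combined with the naturality of the composition $\compi$.

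First, by the naturality of $\compi$ under inclusion and boundary maps (inherited from the naturality of the homology projective join of Appendix~\ref{se:pj}), the morphism $\alpha \mapsto \alpha \compi a_d^{(n)}$ fits into a commutative ladder between the long exact sequences of the pairs $(Z(\bsigma_{m,b}), Z(\bsigma_{m,a}))$ and $(Z((\bsigma_{m,b}, \bepsilon^{n+1})), Z((\bsigma_{m,a}, \bepsilon^{n+1})))$. The five-lemma then reduces the statement to the absolute case: for each $c \in \{a,b\}$, the map $\alpha \mapsto \alpha \compi a_d^{(n)}$ is an isomorphism $HZ_*(\bsigma_{m,c}) \to HZ_*((\bsigma_{m,c}, \bepsilon^{n+1}))$.

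For this absolute case, Proposition~\ref{prop:Qn} identifies $Z(\bepsilon^n)$ up to deformation retraction with a maximal projective subspace $P \cong \CP^{(d+1)(n+1)/2 - 1}$, and a degree count shows that $a_d^{(n)}$ corresponds to the fundamental class $[P]$. Using diagram~(\ref{dia:compipj}) together with Corollary~\ref{cor:Bmn}, the composition $\alpha \compi a_d^{(n)}$ is realized as the image under the map induced by $B := B_{n',n}$ of $\pj_*(\alpha \otimes [P])$, where $n'$ denotes the size of $\bsigma_{m,c}$ and $B: Z(\bsigma_{m,c}) * P \to Z((\bsigma_{m,c}, \bepsilon^{n+1}))$ is the relevant projective join map. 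Since $B$ is homotopic to the standard inclusion $[\mathbf{w}:\mathbf{w}']\mapsto[\mathbf{w}:0:\mathbf{w}']$, the result reduces to two facts: (i) $B(Z(\bsigma_{m,c}) * P)$ is a deformation retract of $Z((\bsigma_{m,c}, \bepsilon^{n+1}))$; (ii) the projective join with $[P]$ realizes the expected isomorphism onto the relevant homology of $Z(\bsigma_{m,c}) * P$, a general property one expects to be established in Appendix~\ref{se:pj}.

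The main obstacle is fact (i). The idea is to exploit the splitting $F_{(\bsigma_{m,c}, \bepsilon^{n+1})} \circ \widetilde{B} = F_{\bsigma_{m,c}} + Q_{n+1}$ provided by Corollary~\ref{cor:Bmn}, combined with a deformation retraction along the positive eigendirections of $Q_{n+1}$, made $S^1$-equivariant so that it descends to the projectivization. The statement for $\alpha \mapsto a_d^{(n)} \compi \alpha$ then follows by the symmetric argument.
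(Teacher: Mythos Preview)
Your overall strategy matches the paper's: write the morphism as the composition
\[
H_*(A)\xrightarrow{\pj_*(\cdot\times a_d^{(n)})} H_{*+i_0}(A*Z(\bepsilon^n))
\xrightarrow{(B_{n',n})_*} H_{*+i_0}(A'),
\]
invoke Corollary~\ref{cor:pjstabilization} for the first arrow, and argue that the second is an isomorphism. The five-lemma reduction to the absolute case is a harmless variation (the paper works directly with the pair). Step~(ii) is exactly Corollary~\ref{cor:pjstabilization}.

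The gap is in step~(i). The identity from Corollary~\ref{cor:Bmn} reads
$F_{(\bsigma_{m,c},\bepsilon,\bepsilon^n)}\circ\widetilde{B}_{n',n}=F_{\bsigma_{m,c}}+Q_n$
(not $Q_{n+1}$), and crucially it holds only on the \emph{image} of $\widetilde{B}_{n',n}$, a proper linear subspace of codimension $d+1$. It therefore says nothing about how $F_{(\bsigma_{m,c},\bepsilon^{n+1})}$ behaves in the complementary direction, so ``retracting along the positive eigendirections of $Q_{n+1}$'' has no meaning there: in $w$-variables the full function is \emph{linear} in the coordinate $w_{n'+1}$ transverse to $\im\widetilde{B}_{n',n}$, so there is no positive-definite direction to flow along, and the sublevel set does not obviously deformation retract onto the image of $B_{n',n}$.

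The paper supplies the missing ingredient: an explicit linear automorphism $C$ of the whole space $(\C^{d+1})^{n'+n+1}$ (in $v$-variables) such that
\[
F_{(\bsigma_{m,c},\bepsilon^{n+1})}\circ C(\mathbf v,\mathbf v')
= F_{\bsigma_{m,c}}(\mathbf v)+Q_{n+2}(0,\mathbf v').
\]
Now the splitting is global, Givental's retraction lemma applies to show
$\{\widehat{F}\circ C\le 0\}$ retracts onto
$\{\widehat{F}_{\bsigma_{m,c}}\le 0\}*\{\widehat{Q}_{n+2}(0,\cdot)\le 0\}$,
and one then compares this with the image of $B_{n',n}$ via auxiliary linear maps $J,P,J',P'$, checking that the composite $P'C^{-1}\widetilde{B}_{n',n}$ is invertible. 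This is precisely the content your sketch of~(i) is missing.
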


\begin{cor}
    The morphism $\theta_m^{m+1}$ is an isomorphism.
\end{cor}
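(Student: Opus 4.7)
The corollary is essentially immediate from Proposition~\ref{prop:compad}, so the proof proposal is short. The plan is simply to match up the data.

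First I would unpack the definition of $\theta_m^{m+1}$ using the recursion (\ref{eq:bdeltam}). Since $n_0$ is even with $n_0 \geq 4$, we have $n := n_0 - 1 \geq 3$ odd. For $-m \leq a \leq b \leq m$, the relation (\ref{eq:bdeltam}) gives
\begin{equation*}
    \bsigma_{m+1,t} = (\bsigma, \bdelta_t^{(m+1)}) = (\bsigma, \bdelta_t^{(m)}, \bepsilon^{n_0}) = (\bsigma_{m,t}, \bepsilon^{n+1}),
\end{equation*}
for $t \in \{a,b\}$. Consequently,
\begin{equation*}
    G_*^{(a,b)}(\bsigma, m+1) = HZ_*(\bsigma_{m+1,b}, \bsigma_{m+1,a}) = HZ_*((\bsigma_{m,b}, \bepsilon^{n+1}), (\bsigma_{m,a}, \bepsilon^{n+1})),
\end{equation*}
so the definition $\theta_m^{m+1}(\alpha) = \alpha \compi a_d^{(n_0-1)} = \alpha \compi a_d^{(n)}$ lands precisely in the target described in Proposition~\ref{prop:compad}.

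Then I would invoke Proposition~\ref{prop:compad} directly with this choice of $n$ and the pair $(\bsigma_{m,a}, \bsigma_{m,b})$ playing the role of the reference tuple: the stated map $\alpha \mapsto \alpha \compi a_d^{(n)}$ is an isomorphism, and it coincides with $\theta_m^{m+1}$ under the identifications above. No further work is required; the only possible obstacle is a bookkeeping check that the parity of $n_0 - 1$ matches the odd-size hypothesis of Proposition~\ref{prop:compad}, which holds since $n_0$ was fixed to be even at the outset of Section~\ref{se:gf}.
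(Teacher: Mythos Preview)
Your proposal is correct and matches the paper's intent: the corollary is the special case $n = n_0 - 1$ of Proposition~\ref{prop:compad}, and the identifications you spell out using (\ref{eq:bdeltam}) are exactly those the paper records just before stating the proposition. The proof block that follows the corollary in the paper is in fact the proof of Proposition~\ref{prop:compad} itself (for general odd $n$), not a separate argument for the corollary, so your short deduction is precisely what the authors have in mind.
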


\begin{proof}
    Let $A:= Z(\bsigma_{m,b},\bsigma_{m,a})$,
    $A' := Z((\bsigma_{m,b},\bepsilon^{n+1}),(\bsigma_{m,a},\bepsilon^{n+1}))$,
    $B:= Z(\bepsilon^n)$,
    $n'$ be the size of $\bsigma_{m,b}$
    and let us denote by $\theta$ the morphism in question.
    Up to a shift in degree,
    the morphism $\theta$ can be written explicitly as
    the composition
    \begin{equation*}
        H_*(A) \xrightarrow{\pj_*(\cdot\times a_d^{(n)})}
        H_{*+i_0}(A*B) \xrightarrow{(B_{n',n})_*}
        H_{*+i_0}(A'),
    \end{equation*}
    for some $i_0\in\N$.
    According to Corollary~\ref{cor:pjstabilization},
    the first morphism is an isomorphism.
    It remains to show that $(B_{n',n})_*$ is
    also an isomorphism.

    Let
    $C$ be the following
    automorphism of $(\C^{d+1})^{n'+n+1}$:
    \begin{equation*}
        C(\mathbf{v},\mathbf{v}') := (\mathbf{v},\mathbf{v}'+(v_1,v_{n'},v_1,
        v_{n'},\ldots,v_1,v_{n'})),
    \end{equation*}
    where $\mathbf{v}\in(\C^{d+1})^{n'}$ and
    $\mathbf{v}'\in(\C^{d+1})^{n+1}$.
    By a direct computation
    \begin{equation*}
        F_{(\bsigma_{m,t},\bepsilon^{n+1})}\circ C(\mathbf{v},\mathbf{v}') = 
        F_{\bsigma_{m,t}}(\mathbf{v}) + Q_{n+2}(0,\mathbf{v}'),
    \end{equation*}
    (this is an explicit version of \cite[Proposition~5.2]{periodicCPd}).
    According to Givental \cite[Proposition~B.1]{Giv90},
    $\{ \widehat{F}_{(\bsigma_{m,t},\bepsilon^{n+1})}\circ C \leq 0 \}$ retracts on
    $\{ \widehat{F}_{\bsigma_{m,t}} \leq 0 \} * \{ \widehat{Q}_{n+2}(0,\cdot) \leq 0\}$.
    The quadratic form $Q_{n+2}(0,\cdot)$ is non-degenerate with index $(n+1)(d+1)$,
    so that the sublevel set
    $\{ \widehat{Q}_{n+2}(0,\cdot)\leq 0\}$ retracts on a complex projective
    space of $\C$-dimension $(d+1)(n+1)/2 -1$.

    The injective linear map $J:(\C^{d+1})^n \to (\C^{d+1})^{n+1}$,
    $J\mathbf{v} := (\mathbf{v},v_1)$ satisfies
    \begin{equation*}
        Q_{n+2}(0,J\mathbf{v}) = Q_n(\mathbf{v}).
    \end{equation*}
    Since the sum of the index and the nullity of $Q_n$ is $(n+1)(d+1)$,
    $\{ \widehat{Q}_n\leq 0\}$ retracts on a projective space of $\C$-dimension
    $(d+1)(n+1)/2 -1$ in such a way that $J$ induces an isomorphism in homology
    \begin{equation*}
        J_*:
        H_*\left(\left\{ \widehat{Q}_n\leq 0\right\}\right) \xrightarrow{\simeq}
        H_*\left(\left\{ \widehat{Q}_{n+2}(0,\cdot)\leq 0\right\} \right).
    \end{equation*}

    Let $P:(\C^{d+1})^{n+1}\to (\C^{d+1})^n$ be the surjective linear map
    \begin{equation*}
        P(v_1,\ldots,v_{n+1}):=(v_{n+1},v_2,v_3,\ldots,v_n)
    \end{equation*}
    so that $PJ=\id$.
    Let $P':(\C^{d+1})^{n+n'+1} \to (\C^{d+1})^{n+n'}$
    be $P'(\mathbf{v},\mathbf{v}')=(\mathbf{v},P\mathbf{v}')$
    and let $J':(\C^{d+1})^{n+n'} \to (\C^{d+1})^{n+n'+1}$
    be $J'(\mathbf{v},\mathbf{v}')=(\mathbf{v},J\mathbf{v}')$ so
    that $P'J'=\id$.
    In $v$-variables, $\widetilde{B}_{n',n}$ takes the form
    \begin{equation*}
        (\mathbf{v},\mathbf{v}') \mapsto (\mathbf{v},v_1,\mathbf{v}'
        + (v_1'-v_1,v_1-v_1',v_1'-v_1,\ldots,v_1-v_1')).
    \end{equation*}
    A direct computation then shows that the endomorphism
    $\tilde{f}:=P'C^{-1}\widetilde{B}_{n',n}$ is invertible.
    More precisely, $\tilde{f}(\mathbf{v},\mathbf{v}')=(\mathbf{v},\tilde{g}(\mathbf{v}')
    +\tilde{h}(\mathbf{v}))$ where $\tilde{g}$ and $\tilde{h}$ are $\C$-linear and
    $\tilde{g}$ is invertible.
    Let $f:A*B\to A*B$ be the induced projective map.
    
    We then have the following commutative diagram:
    \begin{equation*}
        \xymatrixcolsep{4pc}
        \begin{gathered}
        \xymatrix{
            H_*(A*B) \ar[r]^-{(B_{n',n})_*} \ar[d]^-{f_*}_-{\simeq} &
            H_*(A') \\
            H_*(A*B) \ar@/^/[r]^-{J'_*}_-{\simeq} &
            H_*\left(A*\left\{ \widehat{Q}_{n+2}(0,\cdot) \leq 0\right\}
            \right) \ar@/^/[l]^-{P'_*}
            \ar[u]^-{C_*}_-{\simeq}
        }
    \end{gathered},
    \end{equation*}
    By the above discussion, the induced maps $f_*$, $C_*$,
    $J'_*$ and $P'_*$ are isomorphisms.
    Therefore, $(B_{n',n})_*$ is an isomorphism
    and so is $\theta$.
\end{proof}

By construction of the map $\theta_m^{m+1}$, the following diagram
commutes.
\begin{equation}\label{dia:thetaCPN}
    \begin{gathered}
        \xymatrixcolsep{7pc}
        \xymatrix{
            G_*^{(a,b)}(\bsigma,m) \ar[r]^-{\theta_m^{m+1}}_-{\simeq}
            & G_*^{(a,b)}(\bsigma,m+1) \\
        HZ_*(\bsigma_{m,b})
        \ar[r]^-{\cdot \compi a_d^{(n_0-1)}} \ar[u] \ar[d]
            & HZ_*(\bsigma_{m+1,b})
            \ar[u] \ar[dd] \\
            H_{*+r}(\CP^N) \ar[d] \\
            H_{*+r}(\CP^{N+n_0(d+1)})
            \ar[r]^-{\pj_*(\cdot\times[\CP^{(d+1)n_0/2}])}
            & H_{*+r+n_0(d+1)}(\CP^{N+n_0(d+1)})
        }
    \end{gathered},
\end{equation}
where the vertical arrows are induced by inclusions,
$r:=(n_1+mn_0-1)(d+1)$, $n_1$ is the size of $\bsigma$,
$N:=(n_1+mn_0)(d+1)-1$.

Applying the commutativity of (\ref{dia:compipj}) to
$\bsigma=\bepsilon^n$ and $\bsigma'=\bepsilon^{n'}$,
one has
\begin{equation*}
a^{(n)}_k \compi a^{(n')}_l = a^{(n+n'+1)}_{k+l-d},
\end{equation*}
for $-(d+1)(n-1)/2 \leq k\leq d$ and $-(d+1)(n'-1)/2\leq l\leq d$.
By associativity of $\compi$ (Proposition~\ref{prop:associativityComposition}),
we deduce that the isomorphism
$\theta_m^{m+k} := \theta^{m+k}_{m+k-1}\circ\cdots\circ\theta_m^{m+1}$
is $\theta_m^{m+k}(\alpha) = \alpha\compi a_d^{(kn_0-1)}$.

By using the same construction as for $\theta_m^{m+k}$,
one can define an isomorphism
\begin{equation*}
    \eta_k : G_*^{(a,b)}(\bsigma,m)
    \xrightarrow{\simeq} G_*^{(a,b)}((\bepsilon^k,\bsigma),m),
\end{equation*}
sending $\alpha$ to $a_d^{(k-1)}\compi \alpha$
for each even $k\in\N$.
This isomorphism commutes with the direct system induced by the $\theta_m^{m+1}$'s
and the inclusion maps
and makes a diagram similar to (\ref{dia:thetaCPN}) commute where
in particular
$\bsigma_{m+1,b}$ is replaced by $(\bepsilon^k,\bsigma)_{m,b}$.
The commutation with the direct system induces a natural final isomorphism
\begin{equation*}
    \eta_k :
    G_*^{(a,b)}(\bsigma) \xrightarrow{\simeq} 
     G_*^{(a,b)}((\bepsilon^k,\bsigma)).
\end{equation*}

\subsection{Interpolation isomorphisms}
\label{se:interpolation}

We start this section with a general statement that is easily
deduced from Morse theory.
\begin{prop}\label{prop:isotopy}
    Let $X$ be a closed manifold and $m>0$.
    Let $f_{s,t}:X\to\R$, $s\in[0,1]$, $t\in [-m,m]$, be a $C^1$-family
    of maps. We suppose that for all $s\in[0,1]$, $t\in(-m,m)$ and
    $x\in X$, $\frac{\ud}{\ud t} f_{s,t}(x) \leq 0$.
    If $a,b\in(-m,m)$, $a\leq b$, satisfy that $0$ is a regular value of
    $f_{s,a}$ and $f_{s,b}$ for all $s\in[0,1]$, then the inclusion
    $X\hookrightarrow [0,1]\times X$, $x\mapsto (s,x)$,
    induces the following isomorphism in homology for all $s\in[0,1]$
    \begin{equation*}
        H_*\left(\{ f_{s,b} \leq 0 \},\{ f_{s,a} \leq 0\}\right) \xrightarrow{\simeq}
        H_*\left(\{ (r,x)\ |\ f_{r,b}(x) \leq 0 \},
        \{ (r,x)\ |\ f_{r,a}(x) \leq 0\}\right),
    \end{equation*}
    where $(r,x)$ are describing $[0,1]\times X$ in the right hand side of the
    arrow.
    The analogous non-relative statement is also true:
    let $f_s : X\to\R$, $s\in [0,1]$, be a $C^1$-family of maps
    with $0$ as a regular value.
    Then the inclusion
    $X\hookrightarrow [0,1]\times X$, $x\mapsto (s,x)$,
    induces the following isomorphism in homology for all $s\in[0,1]$
    \begin{equation*}
        H_*\left(\{ f_s \leq 0 \}\right) \xrightarrow{\simeq}
        H_*\left(\{ (r,x)\ |\ f_r(x) \leq 0 \}\right).
    \end{equation*}
\end{prop}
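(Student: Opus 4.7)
The plan is to establish the non-relative statement via a fibration argument over the contractible base $[0,1]$, and then deduce the relative version from it by applying the five lemma to the long exact sequences of the pairs.

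For the non-relative case, write $F : [0,1] \times X \to \R$, $F(r,x) := f_r(x)$, and set $\Omega := \{F \leq 0\}$. Since $X$ is closed, $\Omega$ is compact; since $0$ is a regular value of each $f_r$, the subset $\partial \Omega := \{F = 0\}$ is smooth, and the projection $\pi : \Omega \to [0,1]$, $(r,x) \mapsto r$, is a submersion on both the open interior $\{F < 0\}$ (trivially) and on $\partial \Omega$ (because the non-vanishing of $\nabla_x f_r(x)$ on $\partial \Omega$ combines with $\partial_r$ to yield a tangent vector to $\partial \Omega$ with nonzero $r$-component). By Ehresmann's theorem for proper submersions of manifolds with boundary, $\pi$ is a locally trivial fibre bundle; since $[0,1]$ is contractible the bundle is trivialisable, and consequently the slice inclusion $\{f_s \leq 0\} \hookrightarrow \Omega$ is a homotopy equivalence for every $s \in [0,1]$, yielding the desired isomorphism in singular homology.

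For the relative case, introduce $\Omega_t := \{(r,x) \mid f_{r,t}(x) \leq 0\}$ for $t \in \{a,b\}$. The monotonicity hypothesis $\partial_t f_{s,t} \leq 0$ gives $\Omega_a \subset \Omega_b$, so the pair $(\Omega_b, \Omega_a)$ is well-defined (and likewise for the slice pairs). The inclusion of slice pairs into total pairs induces a morphism between the associated long exact sequences of pairs; by the non-relative case applied separately to $f_{r,a}$ and $f_{r,b}$, the vertical arrows on all absolute-homology terms are isomorphisms, and the five lemma forces the vertical arrows on the relative-homology terms to be isomorphisms as well.

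The main obstacle is carrying out the trivialisation concretely—in particular, dealing with the corners of $\Omega$ at $\{0,1\} \times X \cap \partial \Omega$, which technically make $\Omega$ a manifold with corners rather than merely with boundary. A hands-on way around this is to produce the trivialisation explicitly as the flow of a vector field of the form $V = \partial_r - (\partial_r f_r / |\nabla_x f_r|^2)\, \nabla_x f_r$ (with respect to an auxiliary Riemannian metric on $X$) defined on a neighbourhood of $\partial \Omega$ and smoothly extended to $\Omega$ via a cut-off: the defining property $dF(V) = 0$ on $\partial \Omega$ then guarantees that the flow preserves $\partial \Omega$ and hence $\Omega$, while the $\partial_r$-component ensures that integrating from time $0$ to time $s$ identifies the fibres $\pi^{-1}(0)$ and $\pi^{-1}(s)$ diffeomorphically. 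Compactness of $X$ furnishes the uniform lower bound on $|\nabla_x f_r|$ needed to make this global construction smooth.
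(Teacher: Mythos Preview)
Your argument is correct and takes a genuinely different route from the paper's main proof. The paper proceeds locally in the $s$-variable: by compactness it finds $\varepsilon,\delta>0$ so that over any subinterval $I\subset[0,1]$ of length $\leq\delta$ one has a nested chain of four pairs (two ``product'' pairs and two ``total-space'' pairs, at levels $-\varepsilon,0,\varepsilon,2\varepsilon$), applies the Morse deformation lemma to the outer inclusions, and deduces the slice inclusion is an isomorphism over $I$; it then patches these local results over all of $[0,1]$ via Mayer--Vietoris. Your approach is instead global: you observe that the projection $\Omega\to[0,1]$ is a proper submersion of a manifold with boundary (corners), invoke an Ehresmann-type trivialisation (made explicit by the tangent vector field you write down), and then handle the relative case via the five lemma on the long exact sequences of the two pairs. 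Interestingly, the paper explicitly records your approach as an alternative in the paragraph immediately following its proof, phrasing it as a deformation retraction along the gradient flow of the projection $I\times X\to I$. Your method is cleaner and more conceptual; the paper's method has the mild advantage of sidestepping the corner issue and the regularity question you raise (your vector field is only $C^0$ when the family is merely $C^1$, so uniqueness of its flow is not automatic---though this is easily fixed by approximation or by weakening ``diffeomorphism'' to ``homotopy equivalence'').
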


\begin{proof}
    For any interval $I\subset [0,1]$, let $f_{I,t}:I\times X\to \R$
    be the map $f_{I,t}(r,x) := f_{r,t}(x)$.
    Let $a\leq b$ be real numbers as above.
    By compacity of $[0,1]$, there exists an $\varepsilon > 0$ such that
    $[-\varepsilon,2\varepsilon]$ contains only regular values of $f_{s,a}$
    and $f_{s,b}$ for all $s\in [0,1]$.
    By compacity, there also exists a $\delta>0$ such that 
    $\| f_{s,c} - f_{r,c} \|_\infty < \varepsilon$ for $|s-r|\leq \delta$
    and $c\in\{ a,b\}$.

    We recall that if topological pairs $A\subset B\subset C\subset D$ satisfy that the
    maps induced by inclusions $H_*(A)\to H_*(C)$ and $H_*(B)\to H_*(D)$ are isomorphisms, then
    the map induced by inclusion $H_*(B)\to H_*(C)$ is also an isomorphism.
    We apply this result to $A:=(\{ f_{I,b}\leq -\varepsilon \}, \{ f_{I,a}\leq -\varepsilon \})$,
    $B:=(I\times \{ f_{s,b}\leq 0 \}, I\times \{ f_{s,a}\leq 0 \})$,
    $C:=(\{ f_{I,b}\leq \varepsilon \}, \{ f_{I,a}\leq \varepsilon \})$ and
    $D:=(I\times \{ f_{s,b}\leq 2\varepsilon \}, I\times\{ f_{s,a}\leq 2\varepsilon \})$
    for $I\subset [0,1]$ an interval of length less than $\delta$ and $s\in I$.
    Indeed, these topological pairs are increasing for $\subset$ by definition of $\delta$
    and the needed isomorphisms come from the Morse deformation lemma which
    can be applied by definition of $\varepsilon$.
    We thus have the following commutative diagram:
    \begin{equation*}
        \begin{gathered}
            \xymatrix{
                H_*(I\times \{ f_{s,b}\leq 0 \}, I\times \{ f_{s,a}\leq 0 \}) \ar[r]^-{\simeq} &
                H_*(\{ f_{I,b}\leq \varepsilon \}, \{ f_{I,a}\leq \varepsilon \}) \\
                H_*(\{s\}\times \{ f_{s,b}\leq 0 \}, \{s\}\times\{ f_{s,a}\leq 0 \})
                \ar[r]^-{\simeq} \ar[u]^-{\simeq} &
                H_*(\{ f_{I,b}\leq 0\}, \{ f_{I,a}\leq 0\}) \ar[u]^-{\simeq}
            }
        \end{gathered},
    \end{equation*}
    where every map is induced by inclusion. The top arrow is an isomorphism by the above
    general fact. The left hand side arrow is an isomorphism because $I$ retracts on $\{s\}$.
    The right hand side arrow is an isomorphism by the Morse deformation lemma,
    since $[0,\varepsilon]$ contains only regular values of $f_{I,a}$ and $f_{I,b}$.
    Therefore, the bottom arrow is an isomorphism.
    
    According to the Mayer-Vietoris long exact sequence,
    given topological pairs $A$ and $B$,
    if the inclusion maps $H_*(A\cap B) \to H_*(A)$ and
    $H_*(A\cap B)\to H_*(B)$ are isomorphisms, then the inclusion map
    $H_*(A\cap B) \to H_*(A\cup B)$ is an isomorphism.
    We apply this result to 
    $A:=(\{ f_{I,b}\leq 0\}, \{ f_{I,a}\leq 0\})$ and
    $B:=(\{ f_{J,b}\leq 0\}, \{ f_{J,a}\leq 0\})$ for increasing
    length of $I$ and $\length(J)\leq \delta$ to show inductively
    the result.
\end{proof}

Another way to proceed is to remark that 
$(\{ f_{I,b}\leq 0\} \setminus \{ f_{I,a} <0 \},
\{ f_{I,a} = 0 \})$
retracts on
$(\{ f_{s,b}\leq 0\} \setminus \{ f_{s,a} <0 \},
\{ f_{s,a} = 0 \})$
relative to boundaries through the gradient
flow of the restriction of the projection $I\times X\to I$,
which has no critical points under the hypothesis made on $a$ and $b$.

Let $s\mapsto \bsigma^{(s)}$ be a $C^1$-family 
(or more generally $C^1$-piecewise) of tuples associated with
the same $\C$-equivariant Hamiltonian diffeomorphism $\Phi$
without $\C$-lines of fixed points.
We apply Proposition~\ref{prop:isotopy} to the following family of
maps:
\begin{equation*}
    f_s := \widehat{F}_{\bsigma^{(s)}} : \CP^N\to\R.
\end{equation*}
The assumptions of Proposition~\ref{prop:isotopy} are
fulfilled and we define $\Delta$ so that the following diagram
commutes:
\begin{equation*}
    \begin{gathered}
        \xymatrix{
            HZ_*(\bsigma^{(0)}) \ar[r]^-{\simeq}
            \ar[d]^-{\Delta}_-{\simeq}
                & H_{*+i_0} (A) \\
                HZ_*(\bsigma^{(1)}) \ar[ru]^-{\simeq}
            }
    \end{gathered},
\end{equation*}
where the non-vertical arrows are the inclusions maps and
\begin{equation*}
A := \left\{ (r,x)\ |\ f_r (x)\leq 0 \right\}.
\end{equation*}
Since these isomorphisms are defined with inclusion maps the above way,
they clearly commute with inclusion and boundary morphisms.
In the same way, let $(\eta_t^{(s)})$ be a $C^1$-family
of tuples so that for each $s$ the family
$s\mapsto \eta_t^{(s)}$ is associated with the same
$\C$-equivariant Hamiltonian diffeomorphism.
If the associated map $f_{s,t}:\CP^N\to\R$ satisfies the assumption
of Proposition~\ref{prop:isotopy}, then we can define the associated isomorphism
\begin{equation*}
    \Delta : HZ_*(\boldeta_b^{(0)},\boldeta_a^{(0)}) 
    \xrightarrow{\simeq}
    HZ_*(\boldeta_b^{(1)},\boldeta_a^{(1)}).
\end{equation*}
As an important example, 
let $s\mapsto \bsigma^{(s)}$ be a $C^1$-family 
(or more generally $C^1$-piecewise) of tuples associated with
the same $\C$-equivariant Hamiltonian diffeomorphism $\Phi$.
If $a\leq b$ are not action values of $\bsigma$,
then $\eta_t^{(s)} := \bsigma^{(s)}_{m,t}$ satisfies the above
assumption and $\Delta$ is an isomorphism
\begin{equation*}
    \Delta : G_*^{(a,b)}(\bsigma^{(0)},m)
    \xrightarrow{\simeq} G_*^{(a,b)}(\bsigma^{(1)},m).
\end{equation*}
We will call the isomorphism $\Delta$ the interpolation isomorphism associated
with $(\bsigma^{(s)})$ and write in symbols $\Delta \longleftrightarrow
s\mapsto \bsigma^{(s)}$.

\begin{prop}\label{prop:interpolationComposition}
    Let $\Delta$, $\Delta'$ and $\Delta''$ be the interpolation isomorphisms
    associated with $(\bsigma^{(s)})$, $(\boldeta^{(s)}_t)$ and
    $(\bsigma^{(s)},\bepsilon,\boldeta^{(s)}_t)$ respectively.
    The following diagram commutes:
    \begin{equation*}
        \begin{gathered}
            \xymatrix{
                HZ_*\left(\bsigma^{(0)}\right)\otimes 
                HZ_*\left(\boldeta^{(0)}_1,\boldeta^{(0)}_0\right) \ar[r]^-{\compi}
                \ar[d]^-{\Delta\otimes\Delta'}_-{\simeq}
                & HZ_{*-2d}\left(
                \left(\bsigma^{(0)},\bepsilon,\boldeta^{(0)}_1\right),
                \left(\bsigma^{(0)},\bepsilon,\boldeta^{(0)}_0\right)\right)
                \ar[d]^-{\Delta''}_-{\simeq}\\
                HZ_*\left(\bsigma^{(1)}\right)\otimes 
                HZ_*\left(\boldeta^{(1)}_1,\boldeta^{(1)}_0\right) \ar[r]^-{\compi}
                & HZ_{*-2d}\left(\left(\bsigma^{(1)},\bepsilon,\boldeta^{(1)}_1\right),
                \left(\bsigma^{(1)},\bepsilon,\boldeta^{(1)}_0\right)\right)
            }
        \end{gathered}.
    \end{equation*}
\end{prop}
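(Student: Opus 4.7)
The plan is to exploit that $\compi$ is a composite of three natural operations (the Künneth cross product, the projective join $\pj_*$, and the map $(B_{n,n'})_*$) and that $B_{n,n'}$ does not depend on the parameter $s$. This will let me assemble a parameterized composition morphism over $[0,1]$ that intertwines the fiber compositions at $s=0$ and $s=1$ with the inclusion-induced isomorphisms defining $\Delta$, $\Delta'$, $\Delta''$.

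First I would introduce the parameterized total spaces
\begin{align*}
\mathcal{A} &:= \{ (s,[\mathbf{v}]) \in [0,1]\times\CP^N : \widehat{F}_{\bsigma^{(s)}}([\mathbf{v}]) \le 0 \}, \\
\mathcal{B}_t &:= \{ (s,[\mathbf{w}]) \in [0,1]\times\CP^{N'} : \widehat{F}_{\boldeta^{(s)}_t}([\mathbf{w}]) \le 0 \}, \\
\mathcal{C}_t &:= \{ (s,[\mathbf{u}]) \in [0,1]\times\CP^{N''} : \widehat{F}_{(\bsigma^{(s)},\bepsilon,\boldeta^{(s)}_t)}([\mathbf{u}]) \le 0 \}.
\end{align*}
Proposition~\ref{prop:isotopy} says that every fiber inclusion $\iota_s$, $\jmath_s$, $\kappa_s$ of $Z(\bsigma^{(s)})$, $(Z(\boldeta^{(s)}_1), Z(\boldeta^{(s)}_0))$, $(Z(\bsigma^{(s)},\bepsilon,\boldeta^{(s)}_1), Z(\bsigma^{(s)},\bepsilon,\boldeta^{(s)}_0))$ into the corresponding total space is a homology isomorphism, and by construction of the interpolation isomorphisms one has $\Delta = (\iota_1)_*^{-1}(\iota_0)_*$, $\Delta' = (\jmath_1)_*^{-1}(\jmath_0)_*$, $\Delta'' = (\kappa_1)_*^{-1}(\kappa_0)_*$.

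Next, since $\widetilde{B}_{n,n'}$ depends only on the sizes $n, n'$, the identity $F_{(\bsigma^{(s)},\bepsilon,\boldeta^{(s)}_t)} \circ \widetilde{B}_{n,n'} = F_{\bsigma^{(s)}} + F_{\boldeta^{(s)}_t}$ of Corollary~\ref{cor:Bmn} holds uniformly in $s$, so the $s$-independent map $\widetilde{\mathcal{B}} : (s, [\mathbf{w}:\mathbf{w}']) \mapsto (s, B_{n,n'}([\mathbf{w}:\mathbf{w}']))$ sends the fiberwise join $\mathcal{A} *_{[0,1]} \mathcal{B}_t$ into $\mathcal{C}_t$. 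Assembling the Künneth cross product, a fiberwise projective join $\pj^{[0,1]}_*$, and $\widetilde{\mathcal{B}}_*$ yields a parameterized composition
\begin{equation*}
\compi^{[0,1]} : H_*(\mathcal{A}) \otimes H_*(\mathcal{B}_1,\mathcal{B}_0) \longrightarrow H_{*-2d}(\mathcal{C}_1,\mathcal{C}_0)
\end{equation*}
(with the appropriate degree shift). The naturality of each of the three building blocks under inclusion of a fiber into the parameterized total space gives the commutation $(\kappa_s)_* \circ \compi = \compi^{[0,1]} \circ ((\iota_s)_* \otimes (\jmath_s)_*)$ for $s \in \{0,1\}$. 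The conclusion is then a diagram chase: applying $(\kappa_1)_*$ to $\Delta''(\alpha \compi \beta)$ produces $(\kappa_0)_*(\alpha \compi \beta) = \compi^{[0,1]}((\iota_0)_* \alpha \otimes (\jmath_0)_* \beta)$ by the $s=0$ square and the definition of $\Delta''$, while $(\kappa_1)_*(\Delta(\alpha) \compi \Delta'(\beta)) = \compi^{[0,1]}((\iota_1)_*\Delta(\alpha) \otimes (\jmath_1)_*\Delta'(\beta))$ equals the same expression by the definitions of $\Delta$ and $\Delta'$; since $(\kappa_1)_*$ is an isomorphism, the two sides of the proposition agree.

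The main obstacle is making the fiberwise join $\mathcal{A} *_{[0,1]} \mathcal{B}_t$ and the parameterized projective join $\pj^{[0,1]}_*$ precise and verifying that fiber inclusions into them remain homology isomorphisms, so that the naturality of $\pj_*$ proved in Appendix~\ref{se:pj} transports cleanly to this parameterized setting. The cleanest route is to apply Proposition~\ref{prop:isotopy} directly to the family $f_{s,t} := \widehat{F}_{\bsigma^{(s)}} + \widehat{F}_{\boldeta^{(s)}_t}$ on $\CP^N \times \CP^{N'}$ and use Corollary~\ref{cor:Bmn} fiberwise to identify the relevant sublevel sets with joins: the isotopy argument of Proposition~\ref{prop:isotopy} then applies verbatim, and the naturality of $\pj_*$ supplies the remaining intertwining.
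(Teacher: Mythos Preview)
Your approach is essentially the same as the paper's: extend $\pj_*$ to a fiberwise version over $[0,1]$, use that $B_{n,n'}$ is $s$-independent, and deduce commutativity from naturality of the parameterized composition under the fiber inclusions defining $\Delta,\Delta',\Delta''$. The one simplification you miss is that the paper first observes it suffices to treat the case where \emph{one} of the two families is constant in $s$ (since $\Delta''$ for the concatenated family factors accordingly); this lets the fiberwise join be taken between a fixed $A\subset\CP^N$ and a parameterized $B\subset[0,1]\times\CP^{N'}$, so that $p_1:E_{A,B}\to A\times B$ is visibly a $\CP^1$-bundle and the naturality square for $\pj_*$ under $\{s\}\hookrightarrow[0,1]$ is immediate, avoiding the ``doubly parameterized'' join you flag as the main obstacle.
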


\begin{proof}
    One can assume that either $(\bsigma^{(s)})$ or
    $(\boldeta^{(s)}_t)$ is independent of $s$.
    The proof of the proposition is a consequence of the naturality of
    $(B_{n,n'})_*$
    and a slightly generalized version of $\pj_*$ to projective bundles.
    Let $I:=[0,1]$ and let us extend $\pj_*$ to subsets of $I\times\CP^N$
    the following way.
    Let $A\subset \CP^n$ and $B\subset I\times \CP^m$, we set
    $B_s := B\cap \{s\}\times \CP^m$
    and define $A*B\subset I\times\CP^{m+n+1}$ by
    $A*B := \bigcup_s \{s\}\times (A * B_s)$.
    Let $E_{A,B}$ be the set of those $(a,(s,b),(t,c))$'s with $a\in A$, $(s,b)\in B$
    and $(t,c)\in A*B$ such that $s=t$ and $c\in (ab)$;
    let $p_1 : E_{A,B} \to A\times B$ and $p_2 : E_{A,B} \to A*B$
    be associated projection maps.
    Now $p_1$ is a $\CP^1$-fiber bundle and
    $\pj_* : H_*(A\times B) \to H_{*+2}(A*B)$ is defined by $(p_2)_*\circ (p_1)^*$.
    Since $E_{A,B_s}$ is the restriction of the fiber bundle $E_{A,B}$ to
    $A\times B_s$,
    by naturality of the morphisms involved, the following diagram
    commutes for all $s\in I$:
    \begin{equation*}
        \begin{gathered}
            \xymatrix{
                H_*(A\times B_s) \ar[d] \ar[r]^-{\pj_*}
            & H_{*+2} (A*B_s) \ar[d] \\
            H_*(A\times B) \ar[r]^-{\pj_*}
            & H_{*+2} (A*B)
        }
    \end{gathered},
\end{equation*}
where the vertical arrows are inclusion morphisms induced by
$\{s\}\hookrightarrow I$.
By giving to $Z(\boldeta^I_t)$ the meaning of
$\bigcup_s \{s\}\times Z(\boldeta^{(s)}_t)$ and then extending the
definition of $HZ_*$ accordingly, we deduce that the following diagram
commutes for all $s\in I$:
\begin{equation*}
    \begin{gathered}
        \xymatrix{
            HZ_*\left(\bsigma\right)\otimes 
            HZ_*\left(\boldeta^{(s)}_1,\boldeta^{(s)}_0\right) \ar[r]^-{\compi}
            \ar[d]^-{\simeq}
                & HZ_{*-2d}\left(
                    \left(\bsigma,\bepsilon,\boldeta^{(s)}_1\right),
                \left(\bsigma,\bepsilon,\boldeta^{(s)}_0\right)\right)
                \ar[d]^-{\simeq}\\
                HZ_*\left(\bsigma\right)\otimes 
                HZ_*\left(\boldeta^I_1,\boldeta^I_0\right) \ar[r]^-{\compi}
                & HZ_{*-2d}\left(\left(\bsigma,\bepsilon,\boldeta^I_1\right),
                \left(\bsigma,\bepsilon,\boldeta^I_0\right)\right)
            }
        \end{gathered},
    \end{equation*}
    where the vertical arrows are inclusion morphisms.
    The conclusion follows.
\end{proof}

In particular, the interpolation isomorphism
$\Delta : G_*^{(a,b)}(\bsigma^{(0)},m)\to
G_*^{(a,b)}(\bsigma^{(1)},m)$ commutes with the direct system
$(\theta_m^{m+1})$,
so it ultimately defines the interpolation isomorphism
\begin{equation*}
    \Delta : G_*^{(a,b)}(\bsigma^{(0)})
    \xrightarrow{\simeq} G_*^{(a,b)}(\bsigma^{(1)})
\end{equation*}
that commutes with inclusion and boundary morphisms.

We are now in position to prove that $G^{(a,b)}_*(\bsigma)$ and
its inclusion and boundary morphisms are independent, up to isomorphism, of the choice
of continuous family of $n$-tuples of small Hamiltonian flows $(\bsigma^s)$
generating the $\C$-equivariant Hamiltonian flow $(\Phi_s)$ lifting $(\varphi_s)$
with $\bsigma^0 = \bepsilon^n$ and $\bsigma^1 = \bsigma$.
Indeed, let $(\bsigma^s)$ and $((\bsigma')^s)$ be a $n$-tuple and a $n'$-tuple
of small Hamiltonian flows generating $(\Phi_s)$ with $n\geq n'$.
One can define an isomorphism $G_*^{(a,b)}(\bsigma) \xrightarrow{\simeq}
G_*^{(a,b)}(\bsigma')$ by composition of reduction maps $\eta_k$
and interpolation maps $\Delta$ in the following way:
\begin{equation*}
    \begin{gathered}
        \xymatrix{
            G_*^{(a,b)}(\bsigma) \ar[r]^{\simeq}
            & G_*^{(a,b)}(\bsigma') \\
            G_*^{(a,b)}((\bepsilon^{2n},\bsigma)) \ar[d]^-{\Delta}_-{\simeq}
            \ar[u]^-{\eta_{2n}}_-{\simeq}
            & G_*^{(a,b)}((\bepsilon^{3n-n'},\bsigma')) 
            \ar[u]^-{\eta_{3n-n'}}_-{\simeq} \\
            G_*^{(a,b)}((\bsigma,\bepsilon^{2n})) \ar[r]^-{\Delta'}_-{\simeq}
            & G_*^{(a,b)}((\bsigma,\bsigma^{-1},\bepsilon^{n-n'},\bsigma'))
            \ar[u]^-{\Delta''}_-{\simeq}
        }
    \end{gathered},
\end{equation*}
where $\Delta$, $\Delta'$ and $\Delta''$ are interpolation isomorphisms
associated with isotopies of $3n$-tuples generating the same
Hamiltonian diffeomorphism $\Phi$ in the following way
\begin{eqnarray*}
    \Delta & \longleftrightarrow & s\mapsto
    \begin{cases}
        (\bsigma^{2s},\bsigma^{-2s},\bsigma),& 0\leq s\leq 1/2,\\
        (\bsigma,\bsigma^{-2(1-s)},\bsigma^{2(1-s)}),& 1/2 \leq s\leq 1,
    \end{cases} \\
    \Delta' &\longleftrightarrow &s\mapsto
    (\bsigma,\bsigma^{-s},\bepsilon^{n-n'},(\bsigma')^s),\\
    \Delta'' &\longleftrightarrow& s\mapsto
    (\bsigma^{1-s},\bsigma^{s-1},\bepsilon^{n-n'},\bsigma').
\end{eqnarray*}

\subsection{Composition of generating functions homologies}

Let us fix 2 tuples $\bsigma$, $\bsigma'$ of odd respective sizes $n$ and $n'$,
$a,b,c\in\R$ that are not action values of $\bsigma$ or $\bsigma'$.
Let $m,m'\in\N$ such that $m>2m'>4n$.
The composition map has the form
\begin{equation*}
    HZ_*(\bsigma'_{m',c})\otimes G_*^{(a,b)}(\bsigma,m) \xrightarrow{\compi}
    HZ_*(\boldsymbol{\eta}_b,\boldsymbol{\eta}_a),
\end{equation*}
where
\begin{equation*}\label{simeq:C}
\boldsymbol{\eta}_t := \left(\bsigma',\bdelta_c^{(m')},\bepsilon,
\bsigma,\bdelta^{(m)}_t\right).
\end{equation*}
Let $(\boldsymbol{\eta}_t^s)_s$ be a homotopy of tuples of small Hamiltonians
from $\boldsymbol{\eta}_t^0 = \boldsymbol{\eta}_t$ to
$\boldsymbol{\eta}_t^1 =
(\bsigma',\bepsilon,\bsigma)_{t+c,m+m'}$, for $2|t|<m$,
generating the same diffeomorphism for  a fixed value of $t$.
The condition $m'>2m>4n'$ makes the construction of such a homotopy
possible, we sketch the successive stages of it:
\begin{multline*}
\boldsymbol{\eta}_t = 
\left(\bsigma',\bdelta_c^{(m')},\bepsilon,\bsigma,\bdelta^{(m)}_t\right)
\leadsto 
\left(\bsigma',\bepsilon^{m'n_0+1},\bsigma,\bdelta^{(m)}_{t+c}\right)
\\ \leadsto
\left(\bsigma',\bepsilon,\bsigma,\bsigma^{-1},
\bepsilon^k,\bsigma,\bdelta^{(m)}_{t+c}\right)
\leadsto
\left(\bsigma',\bepsilon,\bsigma,
\bepsilon^{m'n_0},\bdelta^{(m)}_{t+c}\right)
\\ \leadsto
\left(\bsigma',\bepsilon,\bsigma,\bdelta^{(m+m')}_{t+c}\right)
= (\bsigma',\bepsilon,\bsigma)_{t+c,m+m'}.
\end{multline*}
According to the previous section, this homotopy induces
an interpolation isomorphism
\begin{equation*}
    \Delta:
    HZ_*(\boldsymbol{\eta}_b,\boldsymbol{\eta}_a)
    \xrightarrow{\simeq} G_*^{(a+c,b+c)}(
    (\bsigma,\bepsilon,\bsigma'),m+m').
\end{equation*}
The composition of the above composition morphism with $\Delta$
gives this generating functions homology version of the composition morphism:
\begin{equation*}
    HZ_*(\bsigma'_{m',c})\otimes G_*^{(a,b)}(\bsigma,m) \to
    G_{*-2d}^{(a+c,b+c)}((\bsigma',\bepsilon,\bsigma),m+m').
\end{equation*}
We define the same way composition morphism of absolute homology groups:
\begin{equation*}
        HZ_*(\bsigma_{m,t}) \otimes HZ_*(\bsigma'_{m',t'}) \to
        HZ_{*-2d}((\bsigma,\bepsilon,\bsigma')_{m+m',t+t'}).
\end{equation*}
We will denote these maps $\alpha\otimes\beta\mapsto \alpha\compii\beta$
so that in symbols
\begin{equation*}
    \alpha\compii\beta := \Delta(\alpha\compi\beta).
\end{equation*}

Since interpolation isomorphisms commute with inclusion in the total space,
the commutativity of (\ref{dia:compipj}) implies the commutativity of the
analogous square
\begin{equation}\label{dia:compiipj}
    \begin{gathered}
        \xymatrix{
            HZ_*(\bsigma_{m,t})\otimes HZ_*(\bsigma'_{m',t'}) \ar[r]^-{\compii} \ar[d]
            & HZ_{*-2d}((\bsigma,\bepsilon,\bsigma')_{m+m',t+t'}) \ar[d]\\
            H_{*+r}(\CP^N)\otimes H_{*+r'}(\CP^{N'}) \ar[r]^-{\pj_*}
            & H_{*+r+r'+2}(\CP^{N''})
        }
    \end{gathered}.
\end{equation}
This new form of the composition morphism
is also associative.
\begin{cor}
    The following diagram of composition morphisms commutes:
    \begin{equation*}
        \begin{gathered}
        \resizebox{0.98\displaywidth}{!}{            
            \xymatrix{
                HZ_*(\bsigma_{m,c})\otimes HZ_*(\bsigma'_{m',c'})\otimes
                G_*^{(a,b)}(\bsigma'',m'')
                \ar[d] \ar[r]
                & HZ_*(\bsigma_{m,c})\otimes
                G^{(a+c',b+c')}_{*-2d}((\bsigma',\bepsilon,\bsigma''),m'+m'')
                \ar[d] \\
                HZ_{*-2d}((\bsigma,\bepsilon,\bsigma')_{m+m',c+c'})\otimes
                G_*^{(a,b)}(\bsigma'',m'')
                \ar[r]
                & G^{(a+c+c',b+c+c')}_{*-4d}
                ((\bsigma,\bepsilon,\bsigma',\bepsilon,\bsigma''),m+m'+m'')
        } }
        \end{gathered}.
    \end{equation*}
    In symbols, given $\alpha\in HZ_*(\bsigma_{m,c})$,
    $\beta\in HZ_*(\bsigma'_{m',c'})$ and
    $\gamma\in G_*^{(a,b)}(\bsigma'',m'')$,
    \begin{equation*}
        (\alpha\compii\beta)\compii\gamma =
        \alpha\compii(\beta\compii\gamma).
    \end{equation*}
\end{cor}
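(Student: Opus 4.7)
The plan is to reduce the statement to the associativity of the $\compi$-product (Proposition~\ref{prop:associativityComposition}), using the compatibility of interpolation isomorphisms with $\compi$ (Proposition~\ref{prop:interpolationComposition}) to expose the $\Delta$'s hidden inside $\compii$ and to rearrange them.

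First I would unfold the definition $\alpha\compii\beta := \Delta(\alpha\compi\beta)$: write $\alpha\compii\beta = \Delta_1(\alpha\compi\beta)$ and $(\alpha\compii\beta)\compii\gamma = \Delta_3((\alpha\compii\beta)\compi\gamma)$, where $\Delta_1$ and $\Delta_3$ are the two interpolation isomorphisms produced by the definition of the inner and outer $\compii$'s. Applying Proposition~\ref{prop:interpolationComposition} with the identity interpolation on the $\gamma$-slot pushes $\Delta_1$ past the outer $\compi$, giving $(\alpha\compii\beta)\compi\gamma = \Delta_1'((\alpha\compi\beta)\compi\gamma)$ for an interpolation isomorphism $\Delta_1'$ attached to the obvious concatenated family, whence $(\alpha\compii\beta)\compii\gamma = (\Delta_3\circ\Delta_1')((\alpha\compi\beta)\compi\gamma)$. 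A symmetric computation yields $\alpha\compii(\beta\compii\gamma) = (\Delta_4\circ\Delta_2')(\alpha\compi(\beta\compi\gamma))$. By Proposition~\ref{prop:associativityComposition} the two inner triple $\compi$-products coincide, so the assertion reduces to the equality $\Delta_3\circ\Delta_1' = \Delta_4\circ\Delta_2'$ of two composite interpolation isomorphisms.

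The hard part is establishing this last equality. For this I would prove the general fact that any two interpolation isomorphisms between the same source and target tuples coincide, provided they are built from homotopies through families of tuples all generating a common $\C$-equivariant Hamiltonian flow. I would deduce it from a two-parameter refinement of Proposition~\ref{prop:isotopy}: for a smooth $[0,1]^2$-family $f_{s,u}:\CP^N\to\R$ with $f_{0,u}$ and $f_{1,u}$ both independent of $u$, the inclusions of the edges $\{u=0\}$ and $\{u=1\}$ into the total space $[0,1]^2\times\CP^N$ induce the same identification of the sublevel set homologies of $f_{0,\cdot}$ and $f_{1,\cdot}$, since both factor through the inclusion of the full square; its proof is a direct adaptation of the Morse deformation and Mayer-Vietoris argument given in Proposition~\ref{prop:isotopy}. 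Applied to an explicit two-parameter interpolation between the two cascades of elementary rearrangements used in the definition of $\compii$—which differ only in the order in which insertions and cancellations are performed on disjoint blocks of identity tuples, hence can easily be deformed one into the other through tuples still generating the fixed flow—this invariance principle delivers $\Delta_3\circ\Delta_1' = \Delta_4\circ\Delta_2'$ and completes the proof.
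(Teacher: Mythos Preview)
Your proposal is correct and follows essentially the same route as the paper: unfold $\compii$ into $\Delta\circ\compi$, use Proposition~\ref{prop:interpolationComposition} to pull the inner $\Delta$'s outside, invoke the associativity of $\compi$ (Proposition~\ref{prop:associativityComposition}), and finish by showing the two composite interpolation isomorphisms agree. The paper dispatches this last step in one sentence (``these two interpolation isomorphisms are associated with homotopies that are themselves homotopic through homotopies preserving the associated family of diffeomorphisms; it is then a simple consequence of the definition''), whereas you spell it out as a two-parameter version of Proposition~\ref{prop:isotopy}; this is precisely the content of that sentence, so the arguments are the same.
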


\begin{proof}
    According to Proposition~\ref{prop:interpolationComposition},
    \begin{equation*}
        (\alpha\compii\beta)\compii\gamma =
        \Delta_2(\Delta_1(\alpha\compi\beta)\compi\gamma)
        = \Delta_2\circ\widetilde{\Delta}_1((\alpha\compi\beta)\compi\gamma),
    \end{equation*}
    where the interpolation isomorphisms are associated with
    homotopies in the following way:
    \begin{eqnarray*}
        \Delta_1 &\longleftrightarrow & (\bsigma_{m,c},\bepsilon,\bsigma'_{m',c'})
        \leadsto (\bsigma,\bepsilon,\bsigma')_{m+m',c+c'},\\
        \widetilde{\Delta}_1 &\longleftrightarrow & 
        ((\bsigma_{m,c},\bepsilon,\bsigma'_{m',c'}),\bepsilon,\bsigma''_{m'',t})
        \leadsto ((\bsigma,\bepsilon,\bsigma')_{m+m',c+c'},\bepsilon,\bsigma''_{m'',t}),\\
        \Delta_2 &\longleftrightarrow &
        ((\bsigma,\bepsilon,\bsigma')_{m+m',c+c'},\bepsilon,\bsigma''_{m'',t})
        \leadsto (\bsigma,\bepsilon,\bsigma',\bepsilon,\bsigma'')_{m+m'+m'',c+c'+t}.
    \end{eqnarray*}
    In the same way,
    \begin{equation*}
        \alpha\compii(\beta\compii\gamma) =
        \Delta_2'(\alpha\compi\Delta_1'(\beta\compi\gamma))
        = \Delta_2'\circ\widetilde{\Delta}_1'(\alpha\compi(\beta\compi\gamma)),
    \end{equation*}
    with convenient interpolation isomorphisms $\Delta_1'$, $\widetilde{\Delta}_1'$
    and $\Delta_2'$.
    According to the associativity of $\compi$
    (Proposition~\ref{prop:associativityComposition}),
    it is enough to prove that
    $\Delta_2\circ\widetilde{\Delta}_1=\Delta_2'\circ\widetilde{\Delta}_1'$.
    These two interpolation isomorphisms are associated with
    homotopies that are themselves homotopic through homotopies
    preserving the associated family of diffeomorphisms.
    It is then a simple consequence of the definition of the interpolation
    isomorphisms.
\end{proof}

As a consequence, the following diagram commutes
\begin{equation*}
    \begin{gathered}
        \xymatrix{
            HZ_*(\bsigma'_{m',c}) \otimes G_*^{(a,b)}(\bsigma,m) \ar[r]^-{\compii}
            \ar[d]^-{\id\otimes\theta_m^{m+1}}_-{\simeq}
            & G_{*-2d}^{(a+c,b+c)}((\bsigma',\bepsilon,\bsigma),m+m') 
            \ar[d]^-{\theta_{m+m'}^{m+m'+1}}_-{\simeq} \\
            HZ_*(\bsigma'_{m',c}) \otimes G_*^{(a,b)}(\bsigma,m+1) \ar[r]^-{\compii}
            & G_{*-2d}^{(a+c,b+c)}((\bsigma',\bepsilon,\bsigma),m+m'+1)
        }
    \end{gathered}.
\end{equation*}
It ultimately defines a morphism
\begin{equation*}\label{eq:compositionUltimate}
    HZ_*(\bsigma'_{m',c}) \otimes G_*^{(a,b)}(\bsigma) \xrightarrow{\compii}
    G_{*-2d}^{(a+c,b+c)}((\bsigma',\bepsilon,\bsigma)),
\end{equation*}
for almost all $a,b\in\overline{\R}$.

By naturality of the composition morphism, 
given $t\geq 0$ and $m\in\N^*$, the following diagram commutes
\begin{equation}\label{dia:compositionInclusion}
    \begin{gathered}
        \xymatrix{
            G^{(a,b)}_*(\bsigma)\otimes HZ_*(\bepsilon_{m,0}) \ar[r]^-{\compii}
            \ar[d]^-{\id\otimes\mathrm{inc}_*}
            & G^{(a,b)}_{*-2d}((\bsigma,\bepsilon^2)) \ar[d]^-{\mathrm{inc}_*}\\
            G^{(a,b)}_*(\bsigma)\otimes HZ_*(\bepsilon_{m,t}) \ar[r]^-{\compii}
            & G^{(a+t,b+t)}_{*-2d}((\bsigma,\bepsilon^2))
        }
    \end{gathered}.
\end{equation}

\subsection{Properties of the generating functions homology}
\label{se:propertiesG}

Let $\bsigma$ and $\bsigma'$ be two different tuples of
$(h_s)$.
We proved that the graded modules $G_*^{(a,b)}(\bsigma)$
and $G_*^{(a,b)}(\bsigma')$ are isomorphic and that
there exists a family of isomorphism compatible with the
inclusion maps so it makes sense to define $G_*^{(a,b)}(h_s)$
in Section~\ref{se:outline}.
Nevertheless, we will keep track of the specific choice of $\bsigma$
in our statements for the sake of being precise.

Let us first focus on the special case $\bsigma = \bepsilon$.
Let us denote by $T_{m,t}$
the family of generating functions associated with $(\bepsilon_{m,t})_t$.
Since the elementary generating function of $\delta_s$ is
$u\mapsto -\tan(\pi s)\| u\|^2$, the map $T_{m,t}$ is a quadratic
form.
Since $T_{m,t}$ is a generating function, its kernel as a quadratic form
has dimension $2(d+1)$.
We had already remarked that $T_{m,0}$ is equivalent to $-T_{m,0}$
(they both generates the identity) which implies that
$\ind T_{m,0} = mn_0(d+1)$ (Proposition~\ref{prop:Qn}).
The variation of index is governed by the Maslov index of
$(e^{-2i\pi t})$ so that
\begin{equation*}
    \ind T_{m,t} - \ind T_{m,0} = 2(d+1)\lfloor t\rfloor,
\end{equation*}
(See \cite[Section~3 and Lemma~5.5]{periodicCPd}).
Therefore, there exists an increasing sequence of
complex projective subspaces $P_{m,-m}\subset P_{m,-m+1}
\subset \cdots \subset P_{m,m}$
such that $P_{m,k} \simeq \CP^{(d+1)(k+mn_0/2)}$ and
$Z(\bepsilon_{m,t})$ retracts on $P_{m,\lfloor t\rfloor}$
inducing an equivalence between the persistence modules
$(H_*(P_{m,\lfloor t\rfloor}))$
and $(H_*(Z(\bepsilon_{m,t})))$, $-m\leq t\leq m$.
Thus, as a graded $R$-module,
\begin{equation*}
    HZ_*(\bepsilon_{m,t}) = \bigoplus_{k=-(d+1)mn_0/2}^{d+(d+1)\lfloor t\rfloor}
    Ra_k^{(mn_0+1)}(t),
\end{equation*}
where $a_k^{(mn_0+1)}(t)$ is the generator of degree $2k$ identified with
the class $[\CP^l]$ of appropriate degree $2l=2k+(d+1)mn_0$ under the isomorphism
induced by $P_{m,\lfloor t\rfloor}\hookrightarrow Z(\bepsilon_{m,t})$.
The inclusion morphism $HZ_*(\bepsilon_{m,t}) \to HZ_*(\bepsilon_{m,s})$
maps each $a_k^{(mn_0+1)}(t)$ to $a_k^{(mn_0+1)}(s)$
(for $-m\leq t\leq s\leq m$).
Hence,
\begin{equation*}
    G_*^{(a,b)}(\bepsilon,m) =
    \bigoplus_{k=d+(d+1)\lfloor a\rfloor}^{d+(d+1)\lfloor b\rfloor}
    R\alpha_k^{(m)}(a,b),
\end{equation*}
for $-m < a\leq b< m$,
where $\alpha_k^{(m)}(a,b)$ is the image of $a_k^{(mn_0+1)}(b)$
under the inclusion morphism $HZ_*(\bepsilon_{m,b})
\to G_*^{(a,b)}(\bepsilon,m)$.
According to the commutativity of (\ref{dia:thetaCPN}),
one has $\theta_m^{m+1}\alpha_k^{(m)}(a,b) = \alpha_k^{(m+1)}(a,b)$.
We set $\alpha_k(a,b) := \theta_m^\infty \alpha_k^{(m)}(a,b)$.
For $a<b<c$, if $\alpha_k(b,c)$ is well-defined,
then $\alpha_k(a,c)$ is also well-defined and sent to
the former.
We deduce that there exists a well-defined $\alpha_k(-\infty,c)
\in G_{2k}^{(-\infty,c)}(\bepsilon)$ sent to $\alpha_k(a,c)$
for all $a\leq c$.
Let $\alpha_k$ be the image of $\alpha_k(-\infty,c)$ under
$G_{2k}^{(-\infty,c)}(\bepsilon) \to G_{2k}^{(-\infty,+\infty)}(\bepsilon)$.
Finally,
\begin{equation*}
    G_*^{(-\infty,+\infty)}(\bepsilon) = \bigoplus_{k\in\Z} R\alpha_k,
\end{equation*}
we will show in Theorem~\ref{thm:spectral} that this is also the case for
any $\bsigma$.

In order to show the ``periodicity'' of the persistence module
of $\bsigma$, let us define a natural isomorphism
\begin{equation}\label{iso:periodic}
    G_*^{(a,b)}(\bsigma) \xrightarrow{\simeq}
    G_{*+2(d+1)}^{(a+1,b+1)}(\bsigma).
\end{equation}
In order to simplify the exposition, let us set
$a_d := a_d^{(mn_0+1)}(0)\in HZ_{2d}(\bepsilon_{m,0})$
and $a_{2d+1} := a_{2d+1}^{(mn_0+1)}(1)\in HZ_{2(2d+1)}(\bepsilon_{m,1})$.
According to Proposition~\ref{prop:compad},
the morphism $G_*^{(a+1,b+1)}(\bsigma) \to G_*^{(a+1,b+1)}((\bepsilon^2,
\bsigma))$, $\alpha\mapsto a_d\compii \alpha$, is an isomorphism;
let us write $\alpha\mapsto a_d^{-1}\compii \alpha$ its inverse morphism.
We define the morphism (\ref{iso:periodic}) by
$\alpha\mapsto a_d^{-1}\compii a_{2d+1} \compii \alpha$.

\begin{prop}\label{prop:periodicity}
    The morphism (\ref{iso:periodic}) is an isomorphism
    commuting with inclusion and boundary morphisms.
\end{prop}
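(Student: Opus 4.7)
The plan is to factor the morphism $\Theta(\alpha) := a_d^{-1} \compii a_{2d+1} \compii \alpha$ as the composition
\[
G_*^{(a,b)}(\bsigma) \xrightarrow{a_{2d+1} \compii \cdot} G_{*+2(d+1)}^{(a+1,b+1)}((\bepsilon^2,\bsigma)) \xrightarrow{(a_d \compii \cdot)^{-1}} G_{*+2(d+1)}^{(a+1,b+1)}(\bsigma),
\]
where the second arrow is the inverse of the isomorphism $a_d \compii \cdot$ obtained by combining Proposition~\ref{prop:compad} with the compatibility of $\compii$ with interpolation (Proposition~\ref{prop:interpolationComposition}). Proving that $\Theta$ is an isomorphism thus reduces to showing that the first arrow $a_{2d+1} \compii \cdot$ is an isomorphism.

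To establish this I would mimic the proof of Proposition~\ref{prop:compad}. By unpacking the definition of $\compii$ and using that its built-in interpolation map $\Delta$ is already an isomorphism, it suffices to check that $\alpha \mapsto a_{2d+1} \compi \alpha$ is an isomorphism $HZ_*(\bsigma_{m,b},\bsigma_{m,a}) \to HZ_*((\bepsilon_{m',1},\bepsilon,\bsigma_{m,b}),(\bepsilon_{m',1},\bepsilon,\bsigma_{m,a}))$. The class $a_{2d+1}$ corresponds (via the analogue of the identification used for $a_d^{(n)}$) to the fundamental class of the maximal complex projective subspace $P_{m',1} \simeq \CP^{(d+1)(m'n_0/2+1)}$ inside $Z(\bepsilon_{m',1})$. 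I would construct a linear change of coordinates on the ambient vector space that splits the generating function $F_{(\bepsilon_{m',1},\bepsilon,\bsigma_{m,t})}$ additively as $F_{\bsigma_{m,t}}(\cdot) + \widetilde{Q}(\cdot)$, with $\widetilde{Q}$ the quadratic form generated by $\bepsilon_{m',1}$, and then apply Givental's retraction \cite[Proposition~B.1]{Giv90} to identify the resulting sublevel sets with projective joins. The induced map in relative homology then takes the form $\alpha \mapsto \pj_*(\alpha \otimes [P_{m',1}])$, which is an isomorphism by Corollary~\ref{cor:pjstabilization}.

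Commutation of $\Theta$ with inclusion and boundary morphisms is inherited from the naturality of $\compii$ in both arguments, which itself follows from the naturality of the projective join $\pj_*$ (Diagram~(\ref{dia:compiipj})) and of the interpolation isomorphisms (Proposition~\ref{prop:interpolationComposition}); this naturality passes to the inverse of the isomorphism $a_d \compii \cdot$, and hence to the composition $\Theta$. The main obstacle I foresee is the construction of the splitting change of variables itself: the original proof of Proposition~\ref{prop:compad} benefitted from $\bepsilon^{n+1}$ producing a pure quadratic form $Q_{n+2}$, whereas $\bepsilon_{m',1}$ carries tangent corrections generated by $\bdelta^{(m')}_1$. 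I expect the correct change of variables to be a suitable shear that absorbs these tangent contributions, after which the remainder of the argument (retraction, identification with $\pj_*$, application of stabilisation) proceeds just as in Proposition~\ref{prop:compad}.
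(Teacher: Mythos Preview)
Your reduction to showing that $\alpha\mapsto a_{2d+1}\compii\alpha$ is an isomorphism, and your handling of naturality, are both correct and match the paper. Where you diverge is in \emph{how} you prove that $a_{2d+1}\compii\cdot$ is an isomorphism.

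The paper does \emph{not} redo the geometric argument of Proposition~\ref{prop:compad}. Instead it introduces the class $a_{-1}:=a_{-1}^{(mn_0+1)}(-1)\in HZ_{-2}(\bepsilon_{m,-1})$ and observes, via the commutativity of~(\ref{dia:compiipj}), that $a_{2d+1}\compii a_{-1}=a_{-1}\compii a_{2d+1}=a_d$. This yields two commuting triangles
\[
(a_{-1}\compii\cdot)\circ(a_{2d+1}\compii\cdot)=a_d\compii\cdot
\qquad\text{and}\qquad
(a_{2d+1}\compii\cdot)\circ(a_{-1}\compii\cdot)=a_d\compii\cdot,
\]
in which the right-hand sides are already known isomorphisms by Proposition~\ref{prop:compad}. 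A standard two-out-of-three argument then shows that every arrow is an isomorphism. This is a purely algebraic trick using the multiplicative structure of the $a_k$'s, and it sidesteps entirely the obstacle you correctly identified: there is no need to build a splitting change of variables absorbing the tangent corrections of $\bdelta^{(m')}_1$.

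Your route is not wrong in principle. Since $\bepsilon_{m',1}$ still generates the identity (because $e^{-2i\pi}=1$) and its generating function $T_{m',1}$ is still a nondegenerate quadratic form up to a $2(d+1)$-dimensional kernel, the stabilisation argument of Proposition~\ref{prop:compad} (ultimately an instance of \cite[Proposition~5.2]{periodicCPd}) should go through with a suitable shear. But this requires tracking the modified index of $T_{m',1}$ and verifying that the induced $(B_{n',n})_*$ is again an isomorphism, which is real work. The paper's argument buys you the result for free from the already-proven case $a_d$, at the cost of one line of bookkeeping in $HZ_*(\bepsilon_{m,t})$.
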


\begin{proof}
    The naturality of this morphism comes from the naturality of
    $\alpha\mapsto a_d\compii\alpha$ and $\alpha\mapsto a_{2d+1}\compii\alpha$.
    It remains to prove that $\alpha\mapsto a_{2d+1}\compii\alpha$
    is an isomorphism.
    Let us set $a_{-1} := a_{-1}^{(mn_0+1)}(-1) \in HZ_{-2}(\bepsilon_{m,-1})$.
    According to the commutativity of (\ref{dia:compiipj}),
    $a_{2d+1}\compii a_{-1} = a_{-1}\compii a_{2d+1}
    = a_d$ where $a_d$ is identified with $a_d^{(2mn_0+3)}(0)$.
    Thus, the following diagram commutes
\begin{equation*}
    \begin{gathered}
        \xymatrixcolsep{4pc}
        \xymatrix{
            G_*^{(a,b)}(\bsigma) \ar[r]^-{a_{2d+1}\compii\cdot}
            \ar[rd]_-{a_d\compii\cdot}^-{\simeq}
            & G_{*+2(d+1)}^{(a+1,b+1)}((\bepsilon^2,\bsigma)) 
            \ar[d]^-{a_{-1}\compii\cdot} \ar[rd]_-{a_d\compii\cdot}^-{\simeq}\\
            & G_*^{(a,b)}((\bepsilon^4,\bsigma)) \ar[r]_-{a_{2d+1}\compii\cdot}
            & G_{*+2(d+1)}^{(a+1,b+1)}((\bepsilon^6,\bsigma))
        }
    \end{gathered}
\end{equation*}
so every arrow in it is an isomorphism.
\end{proof}

\begin{thm}\label{thm:spectral}
    Let $\bsigma$ be a tuple of small $\C$-equivariant Hamiltonian
    diffeomorphisms
    associated with the Hamiltonian diffeomorphism $\varphi$ of $\CP^d$.
    As a graded $R$-module,
    \begin{equation*}
        G_*^{(-\infty,+\infty)}(\bsigma) = \bigoplus_{k\in\Z} R\alpha_k
    \end{equation*}
    for some non-zero $\alpha_k$'s with $\deg\alpha_k = 2k$.
    For all $k\in\Z$, let
    \begin{equation*}
        c_k(\bsigma) := \inf \left\{ t\in \R\ |\ \alpha_k \in
            \im \left( G_*^{(-\infty,t)}(\bsigma) \to
        G_*^{(-\infty,+\infty)}(\bsigma) \right)\right\}.
    \end{equation*}
    Then for all $k\in\Z$, $c_k(\bsigma)\in\R$ is an action value
    of $\bsigma$ and
    $c_{k+d+1}(\bsigma)=c_k(\bsigma)+1$. Moreover \[c_k(\bsigma) \leq c_{k+1}(\bsigma)\] for all $k \in \Z,$ and if there exists $k\in\Z$ such that $c_k(\bsigma)=c_{k+1}(\bsigma)$,
    then $\varphi$ has infinitely many fixed points of action $c_k(\bsigma)$.
    If $d+1$ consecutive $c_k(\bsigma)$'s are equal then $\varphi=\id$.
\end{thm}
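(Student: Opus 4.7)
The first step is to pin down the algebraic structure of $G_*^{(-\infty,+\infty)}(\bsigma)$. For fixed $m$, a deformation argument along the lines of Proposition~\ref{prop:isotopy} identifies $G_*^{(-m,m)}(\bsigma,m)$, as a graded $R$-module, with the ambient $H_{*+(n+mn_0-1)(d+1)}(\CP^N)$ for $N=(n+mn_0)(d+1)-1$, free of rank one in each even degree of the appropriate range. Passing to the colimit in $m$ through $\theta_m^{m+1}$, whose effect on ambient homology is the stabilizing inclusion into a larger projective space, yields one copy of $R$ in every even degree $2k$, $k\in\Z$, providing the generators $\alpha_k$. I choose these coherently under the periodicity isomorphism~(\ref{iso:periodic}) so that $\alpha_k\mapsto\alpha_{k+d+1}$ under the shift. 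Finiteness of $c_k(\bsigma)$ comes from the direct-limit structure (upper bound) and the boundedness of the action spectrum of $\bsigma_{m,\cdot}$ on $[-m,m]$ (lower bound); that $c_k(\bsigma)$ is an action value of $\bsigma$ follows from Proposition~\ref{prop:isotopy}, since between two non-action values the persistence morphism is an isomorphism. The periodicity $c_{k+d+1}(\bsigma)=c_k(\bsigma)+1$ is then immediate from the naturality of~(\ref{iso:periodic}) in $(a,b)$ and the chosen labelling of the $\alpha_k$.

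For monotonicity I would use the composition action $a_{d-1}\compii\cdot$ with the class $a_{d-1}:=a_{d-1}^{(mn_0+1)}(0)\in HZ_{2(d-1)}(\bepsilon_{m,0})$, which has spectral value $0$ since the explicit description of $G_*(\bepsilon)$ computed just before the theorem gives $c_{d-1}(\bepsilon)=0$. The commutativity of~(\ref{dia:compiipj}), the formula $\pj_*([\CP^a]\times[\CP^b])=[\CP^{a+b+1}]$ from the appendix on the homology projective join, and the degree count $2(d-1)+2(k+1)-2d=2k$, together identify $a_{d-1}\compii\alpha_{k+1}$ as a unit multiple of $\alpha_k$. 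Composition with a class of action $0$ preserves the action filtration by~(\ref{dia:compositionInclusion}), hence any representative of $\alpha_{k+1}$ in $G_*^{(-\infty,t)}(\bsigma)$ yields a representative of $\alpha_k$ in the same module, whence $c_k(\bsigma)\leq c_{k+1}(\bsigma)$.

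Finally I would handle the equality statements via a Lusternik--Schnirelmann cup-length argument. If $c_k(\bsigma)=c_{k+1}(\bsigma)=c$ and $\varphi$ has only finitely many fixed points of action $c$, then the critical $\C$-lines of $\widehat{F}_{\bsigma_{m,c}}$ at value $0$ would be isolated with finite-dimensional local homology; however, the relation $\alpha_k=a_{d-1}\compii\alpha_{k+1}$ with both classes born at the same critical level forces, in the style of Givental's proof of Fortune--Weinstein, the critical set at value $0$ to be positive-dimensional, and via the correspondence between critical $\C$-lines of $\widehat{F}_{\bsigma_{m,c}}$ and fixed points of $\varphi$ this produces infinitely many fixed points of action $c$. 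Iterating with $j$-fold compositions $a_{d-1}\compii\cdots\compii a_{d-1}\compii\alpha_{k+j}=\lambda_j\alpha_k$ shows that $d+1$ consecutive equal $c_k(\bsigma)$'s force the critical set at a single level to have cup-length at least $d$, which by a dimension count fills the whole projective subspace parameterizing fixed $\C$-lines and forces $\varphi=\id$. The main technical obstacle is precisely this LS step: translating the projective-join composition $\compii$ into a genuine cup-length inequality on the sublevel sets of $\widehat{F}_{\bsigma_{m,c}}$ and exploiting it to rule out a finite critical set.
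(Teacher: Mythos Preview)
Your first step has a real gap. You claim that a deformation ``along the lines of Proposition~\ref{prop:isotopy}'' identifies $G_*^{(-m,m)}(\bsigma,m)$ with the homology of the ambient $\CP^N$, but Proposition~\ref{prop:isotopy} requires that $0$ remain a regular value throughout the isotopy, and this fails for any isotopy $\bsigma^{(s)}$ from $\bsigma$ to $\bepsilon^n$: as $s$ varies the generated diffeomorphism changes, action values move, and critical values of $\widehat{F}_{\bsigma^{(s)}_{m,t}}$ inevitably cross $0$ for some $s$ at the endpoints $t\approx\pm m$ (indeed $\bepsilon$ itself has every integer as an action value). Moreover, even for $\bepsilon$ the explicit computation in the beginning of Section~\ref{se:propertiesG} shows that $G_*^{(a,b)}(\bepsilon,m)$ has rank $(d+1)(\lfloor b\rfloor-\lfloor a\rfloor)$, which for $a\approx -m$, $b\approx m$ is about $2m(d+1)$, not the full $(n+mn_0)(d+1)=\dim H_*(\CP^N)$; so the identification you assert is simply false as stated.

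The paper circumvents this by a different mechanism: instead of an isotopy, it uses the pointwise inequalities
\[
    F_{(\delta_\tau,\ldots,\delta_\tau)_{m,t}} \leq F_{\bsigma_{m,t}} \leq
    F_{(\delta_{-\tau},\ldots,\delta_{-\tau})_{m,t}}
\]
coming from the $C^0$-bound $|f_k|\leq\tan(\pi\tau)$ on the unit ball, together with interpolation isomorphisms $(\delta_{\pm\tau},\ldots,\delta_{\pm\tau})_{m,t}\leadsto (\bepsilon^n)_{m,t\pm n\tau}$, to produce a genuine $n\tau$-interleaving between the persistence modules of $\bsigma$ and of $\bepsilon^n$. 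This interleaving is what forces $G_*^{(-\infty,+\infty)}(\bsigma)\simeq G_*^{(-\infty,+\infty)}(\bepsilon^n)$ and simultaneously transports the characterisation of $\alpha_k$ in Proposition~\ref{prop:spectral} from $\bepsilon^n$ to $\bsigma$. Your monotonicity argument via $a_{d-1}\compii\cdot$ is a reasonable alternative to the Lyusternik--Schnirelmann route the paper takes (though you must carefully compose with $\eta_2^{-1}$ to land back in $G_*(\bsigma)$ and verify that the image is $\alpha_k$), and your treatment of the equality cases is in the same spirit as the paper's reference to \cite{The98} and \cite{periodicCPd}; but the computation of $G_*^{(-\infty,+\infty)}(\bsigma)$ needs the interleaving, not a deformation.
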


The following proposition is contained in the proof of Theorem~\ref{thm:spectral}.
It makes precise the fact that one could informally think of $\alpha_k$ as ``the class
$[\CP^{k+\infty}]\in H_{2(k+\infty)}(\CP^{2\infty})$''.

\begin{prop}\label{prop:spectral}
    Let $\bsigma$ be an $n_1$-tuple of small $\C$-equivariant Hamiltonian diffeomorphisms
    and let $k\in\Z$.
    Let $m\in\N$, $K\in\R$ and $t\in\R$ such that $-m<-K<c_k(\bsigma)<t<m$.
    We set $r:=(n_1+mn_0-1)(d+1)$ and $N:=(n_1+mn_0)(d+1)-1$.
    Let $\alpha'_k\in G_{2k}^{(-\infty,t)}(\bsigma)$ be a class sent to $\alpha_k$
    under the inclusion map $G_*^{(-\infty,t)}(\bsigma)\to G_*^{(-\infty,+\infty)}(\bsigma)$.
    Then the image $\alpha_k''\in G_{2k}^{(-K,t)}(\bsigma)$ under the inclusion map
    is non-zero and there exists $b_k'\in G_{2k}^{(-K,t)}(\bsigma,m)$ such that
    $\theta_m^\infty(b_k') = \alpha_k''$.
    The class $b_k'$ is the image of a class $b_k\in
    HZ_{2k}(\bsigma_{m,t})$ that is sent to
    $[\CP^{k+r/2}]\in H_{2k+r}(\CP^N)$ under the map induced by inclusion
    $Z(\bsigma_{m,t}) \hookrightarrow \CP^N$.
\end{prop}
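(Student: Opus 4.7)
I would treat the three assertions in the stated order, with the last being the main task.

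First, the non-triviality of $\alpha''_k$ follows by a short diagram chase on the long exact sequence of the triple
\[
G_{2k}^{(-\infty,-K)}(\bsigma) \to G_{2k}^{(-\infty,t)}(\bsigma) \to G_{2k}^{(-K,t)}(\bsigma).
\]
If $\alpha''_k$ vanished, exactness would express $\alpha'_k$ as the image of some element of $G_{2k}^{(-\infty,-K)}(\bsigma)$; pushing forward to $G_{2k}^{(-\infty,+\infty)}(\bsigma)$ would then realise $\alpha_k$ as coming from $G_{2k}^{(-\infty,-K)}(\bsigma)$, contradicting $-K < c_k(\bsigma)$ by the infimum definition of $c_k(\bsigma)$. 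For the existence of $b'_k$, observe that since every $\theta_m^{m+1}$ is an isomorphism (Proposition~\ref{prop:compad}), the canonical map $\theta_m^\infty \colon G_{2k}^{(-K,t)}(\bsigma,m) \to G_{2k}^{(-K,t)}(\bsigma)$ into the direct limit is itself an isomorphism (for any $m$ with $-K,t \in [-m,m]$), so $b'_k := (\theta_m^\infty)^{-1}(\alpha''_k)$ is uniquely defined.

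The third claim is the substantive one. The class $b'_k$ lifts to some $b_k \in HZ_{2k}(\bsigma_{m,t})$ through the long exact sequence of the pair $(Z(\bsigma_{m,t}), Z(\bsigma_{m,-K}))$ if and only if the connecting homomorphism $\partial$ annihilates it. My plan is to establish this vanishing together with the ambient identification by exploiting the inclusion $Z(\bsigma_{m,t}) \hookrightarrow \CP^N$: naturality of the long exact sequences gives a map from that of $(Z(\bsigma_{m,t}), Z(\bsigma_{m,-K}))$ into the one of $(\CP^N, Z(\bsigma_{m,-K}))$. The strategy is to show that the image of $b'_k$ in $H_{2k+r}(\CP^N, Z(\bsigma_{m,-K}))$ is the natural relative lift of $[\CP^{k+r/2}] \in H_{2k+r}(\CP^N)$; this simultaneously forces $\partial b'_k = 0$ (since absolute classes have zero relative boundary) and pins down the image of the resulting lift $b_k$ in $\CP^N$.

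The hard part is proving that identification. I would first establish it in the model case $\bsigma = \bepsilon$, where Section~\ref{se:propertiesG} already exhibits an explicit retract of $Z(\bepsilon_{m,t})$ onto a projective subspace $P_{m,\lfloor t \rfloor}$ and identifies the classes $\alpha_k^{(m)}(a,b)$ directly with inclusion images of fundamental classes of linear subspaces, making the diagram chase transparent. For general $\bsigma$, the interpolation tools of Section~\ref{se:interpolation} cannot compare $\bsigma$ to $\bepsilon$ directly (they generate different Hamiltonian flows), so I would instead leverage the commutative diagram~(\ref{dia:thetaCPN}), which ensures that inclusion into $\CP^N$ is compatible with the stabilisation $\theta_m^{m+1}$. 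The images then assemble into a single map to $H_*(\CP^\infty)$, which is one-dimensional in each even degree; combined with the freeness of $G_{2k}^{(-\infty,+\infty)}(\bsigma)$ over $R$ with $\alpha_k$ among the generators and the compatibility of the composition product $\compii$ with the ambient inclusion (diagram~(\ref{dia:compiipj})), this should force the image to be the prescribed fundamental class $[\CP^{k+r/2}]$ exactly, completing the argument.
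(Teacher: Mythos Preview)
Your handling of the first two assertions is sound and matches the paper's implicit reasoning. The substantive divergence is in the third claim.

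The paper's approach, carried out inside the proof of Theorem~\ref{thm:spectral}, does not rely on the diagrams~(\ref{dia:thetaCPN}) or~(\ref{dia:compiipj}) to pass from $\bepsilon^n$ to a general $\bsigma$. Instead it uses a direct comparison of generating functions: choosing $\tau\in(0,1/2)$ so that each elementary generating function $f_j$ is bounded by $\tan(\pi\tau)$ on the unit ball, one gets the pointwise inequalities
\[
F_{(\delta_\tau,\ldots,\delta_\tau)_{m,t}} \;\leq\; F_{\bsigma_{m,t}} \;\leq\; F_{(\delta_{-\tau},\ldots,\delta_{-\tau})_{m,t}},
\]
hence genuine inclusions of sublevel sets
\[
Z\big((\delta_{-\tau},\ldots,\delta_{-\tau})_{m,t}\big)\;\subset\; Z(\bsigma_{m,t})\;\subset\; Z\big((\delta_\tau,\ldots,\delta_\tau)_{m,t}\big).
\]
After an interpolation isomorphism identifying $(\delta_{\pm\tau},\ldots,\delta_{\pm\tau})_{m,t}$ with $(\bepsilon^n)_{m,t\pm n\tau}$, these inclusions furnish an $n\tau$-interleaving between the persistence modules of $\bsigma$ and $\bepsilon^n$. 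Because the interleaving morphisms are literally induced by inclusions into the same ambient $\CP^N$, the explicit identification of the spectral classes with $[\CP^{k+r/2}]$ in the $\bepsilon^n$ case transfers verbatim to $\bsigma$. You dismissed this route on the grounds that interpolation requires the tuples to generate the same diffeomorphism; that is true for interpolation isomorphisms, but the paper is using inclusion morphisms here, not interpolation.

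Your alternative route has a gap at the decisive point. Knowing that $H_{2k+r}(\CP^N)$ is free of rank one tells you only that $b_k$ maps to some $R$-multiple of $[\CP^{k+r/2}]$; nothing in your argument pins down that multiple as $1$ rather than $0$ or a non-unit. Invoking the freeness of $G_{2k}^{(-\infty,+\infty)}(\bsigma)$ is circular, since that structure is itself established in the paper via the very interleaving above. And the commutativity of~(\ref{dia:compiipj}) would only help if you already knew the image of some $b_k$ in $\CP^N$ to feed into the product. The sandwiching by rotations is the missing ingredient.
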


\begin{proof}[Proof of Theorem~\ref{thm:spectral}]
    In the beginning of this section, we have proved the theorem
    for $\bsigma=\bepsilon$; hence for $\bsigma=\bepsilon^n$
    for all odd $n\in\N$.
    Let us show that the persistence module of any $n$-tuple
    $\bsigma$ and the persistence module of $\bepsilon^n$ are
    $\delta$-interleaved for some $\delta>0$.

    Let $\bsigma$ be an $n$-tuple and let us denote
    $f_1,\ldots,f_n : \C^{d+1} \to \R$ the elementary generating functions
    of $\sigma_1,\ldots,\sigma_n$ respectively.
    Let $\tau\in (0,1/2)$ be such that
    \begin{equation*}
        \max_{z\in B,j\in\{ 1,\ldots,n\}} |f(z)| \leq \tan(\pi\tau),
    \end{equation*}
    where $B\subset \C^{d+1}$ denotes the closed unit ball.
    Then
    \begin{equation*}
        F_{(\delta_\tau,\ldots,\delta_\tau)_{m,t}} \leq
        F_{\bsigma_{m,t}} \leq
        F_{(\delta_{-\tau},\ldots,\delta_{-\tau})_{m,t}}.
    \end{equation*}
    Composing interpolation morphisms associated with
    $(\delta_{\pm\tau},\ldots,\delta_{\pm\tau})_{m,t}
    \leadsto (\bepsilon^n)_{m,t\pm n\tau}$,
    \begin{equation*}
        s\mapsto 
        (\delta_{\pm(1-s)\tau},\ldots,\delta_{\pm(1-s)\tau})_{m,t+sn\tau},
    \end{equation*}
    with inclusion morphisms induced by
    \begin{equation*}
        Z({(\delta_{-\tau},\ldots,\delta_{-\tau})_{m,t}})
        \subset
        Z(\bsigma_{m,t})
        \subset
        Z({(\delta_{\tau},\ldots,\delta_{\tau})_{m,t}}),
    \end{equation*}
    one gets a natural
    (\emph{i.e.} that commutes with inclusions and the direct system)
    $n\tau$-interleaving between $G_*^{(-K,t)}(\bsigma,m)$
    and $G_*^{(-K,t)}(\bepsilon^n,m)$ with $K\in (-m,1-m)$ fixed,
    almost every $t\in (-K,m)$ and large $m$.
    These natural interleavings induced a $n\tau$-interleaving
    between $G_*^{(-\infty,t)}(\bsigma)$ and $G_*^{(-\infty,t)}(\bepsilon^n)$.
    The fact that $G_*^{(-\infty,+\infty)}(\bsigma)$ is isomorphic
    as a graded $R$-module to $G_*^{(-\infty,+\infty)}(\bepsilon^n)$
    is now a direct consequence of the existence of an interleaving
    between their associated persistence modules.
    The characterisation of $c_k(\bsigma)$ and $\alpha_k$ given by
    Proposition~\ref{prop:spectral} is true for $\bsigma = \bepsilon^n$
    by the above discussion.
    Since the $n\tau$-interleaving is ultimately induced by inclusions,
    Proposition~\ref{prop:spectral} is still true for
    $\bsigma$ that is ``$\alpha_k$ corresponds to $[\CP^{N(m)+k}]$''
    seen in $G_*^{(a,b)}(\bsigma,m)$ for $m$, $-a$ and $b$ large enough.

    The fact that $c_{k+d+1}(\bsigma) = c_k(\bsigma) +1$ is a direct consequence
    of Proposition~\ref{prop:periodicity} applied to $a=-\infty$ and
    $b= c_k(\bsigma) + \varepsilon$ for $\varepsilon>0$.
    The last statements of the Theorem~\ref{thm:spectral} are consequences
    of the Lyusternik-Schnirelmann theory, as one can see that
    $G_*^{(a,b)}(\bsigma,m)$ is naturally isomorphic to the relative homology
    of sublevel sets of one single map (see \cite[Section~5]{The98} or
    \cite[Section~5.4]{periodicCPd} for
    details).
\end{proof}

The dual statement holds for the generating functions cohomology groups
with the additional structure of $R$-algebra induced by the cup-product:
the $R$-algebra $G^*_{(-\infty,+\infty)}(\bsigma)$ is generated by
a class $e$ of degree 2 that is invertible and of infinite order:
\begin{equation*}
    G^*_{(-\infty,+\infty)}(\bsigma) = \bigoplus_{k\in\Z} R e^k.
\end{equation*}
Informally one could think of $e^k$ as ``the class $u^{k+\infty}
\in H^{2(k+\infty)}(\CP^{2\infty})$'' where $u$ is the generator
of $H^*(\CP^\infty)$.
Therefore, one can define alternatively the spectral values $c_k(\bsigma)$ by
\begin{equation*}
        c_k(\bsigma) = \inf \left\{ t\in \R\ |\ e^k \not\in
        \ker \left( G^*_{(-\infty,+\infty)}(\bsigma) \to
        G^*_{(-\infty,t)}(\bsigma) \right)\right\}.
\end{equation*}

Given an $n$-tuple $\bsigma=(\sigma_1,\ldots,\sigma_n)$,
we denote by $\bsigma^{-1}$ the $n$-tuple $(\sigma_n^{-1},\ldots,\sigma_1^{-1})$.
We recall that if $f:\C^{d+1}\to\R$ is an elementary generating function of $\sigma$
then $-f$ is an elementary generating function of $\sigma^{-1}$.
Therefore, one has
\begin{equation}\label{eq:Finverse}
    F_{\bsigma^{-1}}(v_1,v_2,\ldots,v_n) = -F_\bsigma(v_1,v_n,v_{n-1},\ldots,v_2)
\end{equation}
(this identity has already been stated in the special case $\bsigma=\bepsilon^n$ in
Proposition~\ref{prop:Qn}).

Given an $n$-tuple $\bsigma$ and an $m$-tuple $\bsigma'$, one has
\begin{equation}\label{eq:Fcyclic}
    F_{(\bsigma,\bsigma')}(\mathbf{v},\mathbf{v}') =
    F_{(\bsigma',\bsigma)}(\mathbf{v}',\mathbf{v}),\quad
    \forall \mathbf{v} \in (\C^{d+1})^n,
    \forall \mathbf{v}' \in (\C^{d+1})^m.
\end{equation}

\begin{prop}[Poincaré duality]\label{prop:poincareduality}
    Let $\bsigma$ be a tuple of small $\C$-equivariant Hamiltonian diffeomorphisms
    of $\C^{d+1}\setminus 0$.
    There exists a duality isomorphism between generating function homology and cohomology
    \begin{equation*}
        PD: G^*_{(a,b)}(\bsigma) \xrightarrow{\simeq}
        G_{2d-*}^{(-b,-a)}(\bsigma^{-1}),
    \end{equation*}
    with $-\infty\leq a\leq b\leq +\infty$ and $a,b$ not action values.
    This isomorphism is natural: it commutes with inclusion and boundary maps.
\end{prop}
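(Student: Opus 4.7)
The plan is to realize $PD$ as classical Poincaré–Lefschetz duality on the ambient $\CP^N$, combined with the $\C$-linear involution $\tau$ coming from (\ref{eq:Finverse}) which identifies $\widehat{F}_{\bsigma^{-1}}$ with $-\widehat{F}_{\bsigma}\circ\tau$, and then an interpolation isomorphism to replace $(\bsigma_{m,t})^{-1}$ by $(\bsigma^{-1})_{m,-t}$.

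Fix $m$ large and $a\leq b$ in $(-m,m)$ not action values. Let $n$ be the size of $\bsigma_{m,t}$ (independent of $t$) and set $N:=n(d+1)-1$. Let $\tau:\CP^N\to\CP^N$ be the involution induced by the permutation $(v_1,v_2,\ldots,v_n)\mapsto(v_1,v_n,v_{n-1},\ldots,v_2)$. By (\ref{eq:Finverse}), $\widehat{F}_{(\bsigma_{m,t})^{-1}}=-\widehat{F}_{\bsigma_{m,t}}\circ\tau$, so $\tau$ restricts to a homeomorphism $Z((\bsigma_{m,t})^{-1})\to\{\widehat{F}_{\bsigma_{m,t}}\geq 0\}$. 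Since $a,b$ are not action values, $0$ is a regular value of $\widehat{F}_{\bsigma_{m,a}}$ and $\widehat{F}_{\bsigma_{m,b}}$, so the Morse deformation lemma gives $\CP^N\setminus Z(\bsigma_{m,t})\simeq\{\widehat{F}_{\bsigma_{m,t}}\geq 0\}$ for $t\in\{a,b\}$.

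Classical Poincaré–Lefschetz duality on $\CP^N$ (a closed oriented $2N$-manifold) then yields a natural isomorphism
\begin{equation*}
    H^p\bigl(Z(\bsigma_{m,b}),Z(\bsigma_{m,a})\bigr)
    \xrightarrow{\simeq}
    H_{2N-p}\bigl(Z((\bsigma_{m,a})^{-1}),Z((\bsigma_{m,b})^{-1})\bigr),
\end{equation*}
where on the right we have applied $\tau$ to identify the relevant complements of sublevel sets with sublevel sets of $\widehat{F}_{(\bsigma_{m,t})^{-1}}$. Since $(\bsigma_{m,t})^{-1}$ and $(\bsigma^{-1})_{m,-t}$ both generate $e^{2i\pi t}\Phi_1^{-1}$, we can apply the interpolation isomorphism from Section~\ref{se:interpolation} along a homotopy of tuples between them, which further identifies the right-hand side with $H_{2N-p}(Z((\bsigma^{-1})_{m,-a}),Z((\bsigma^{-1})_{m,-b}))$. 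Taking $p=k+(n-1)(d+1)$ and unwinding the shifts in $HZ^*$ and $HZ_*$, we get a degree target $2N-p-(n-1)(d+1)=2N-2(n-1)(d+1)-k=2d-k$, using $N=n(d+1)-1$. This produces the finite-$m$ duality
\begin{equation*}
    PD_m: G^k_{(a,b)}(\bsigma,m)\xrightarrow{\simeq} G^{(-b,-a)}_{2d-k}(\bsigma^{-1},m).
\end{equation*}

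Naturality with respect to the inclusion and boundary morphisms is inherited from the naturality of each of the four ingredients (Lefschetz duality, the deformation retract of open superlevels, the homeomorphism $\tau$, and interpolation isomorphisms), which all commute with pair inclusions and connecting maps. The main obstacle is to check that $PD_m$ commutes with the direct system $\theta_m^{m+1}(\alpha)=\alpha\compi a_d^{(n_0-1)}$ so that one obtains an isomorphism on the direct limits $G^k_{(a,b)}(\bsigma)$. This amounts to comparing stabilization by $a_d^{(n_0-1)}$ on the cohomology side with the analogous stabilization on the homology side of $\bsigma^{-1}$; the key point is that under $\tau$ (extended suitably to the enlarged projective space), the top class $a_d^{(n_0-1)}$ used for stabilization on one side corresponds to the analogous top class for $\bsigma^{-1}$, as can be read off from the commutative diagram (\ref{dia:compipj}) and the explicit description of $a_k^{(n)}$ as the inclusion of a maximal projective subspace of $\{Q_n\leq 0\}$. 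Once this commutation is established, passing to direct and inverse limits in $m$ and in $a\to-\infty$, $b\to+\infty$ yields the stated duality on the intrinsic groups $G^*_{(a,b)}(\bsigma)$, with naturality in $(a,b)$ preserved.
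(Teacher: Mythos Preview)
Your proof is correct and follows essentially the same approach as the paper: classical Poincar\'e--Lefschetz duality on the ambient $\CP^N$, the coordinate-reversal involution $\tau$ realizing (\ref{eq:Finverse}), and an interpolation isomorphism from $(\bsigma_{m,t})^{-1}$ to $(\bsigma^{-1})_{m,-t}$. The paper states the duality for manifolds with boundary $M=\{a\leq f\leq b\}$ and uses excision to pass to pairs of sublevel sets, but this is the same argument as your complement formulation; your degree computation matches the paper's. One remark: you explicitly flag commutation with the direct system $\theta_m^{m+1}$ as the ``main obstacle'' and sketch why it holds, whereas the paper's proof is silent on this point and simply works at fixed $m$---so your treatment is if anything slightly more careful here, though your justification remains a sketch rather than a full verification.
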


\begin{proof}
    Let us recall a version of the classical Poincaré duality
    (see for instance \cite[Theorem~3.43]{Hatcher}).
    Let $M$ be a compact $R$-orientable $n$-dimensional manifold
    whose boundary $\partial M$ is the union of two disjoint manifolds $A$
    and $B$. Then the cap-product with the fundamental class
    $[M]\in H_n(M,\partial M)$ gives a natural isomorphism
    $H^*(M,A) \to H_{n-*}(M,B)$.
    This general statement can be applied to sublevel sets of
    a $C^1$ map of a closed $R$-orientable $n$-manifold $f:W\to\R$ in the following way.
    Let $a<b$ be regular values of $f$ so that $A:= \{ f=a\}$
    and $B:=\{ f=b\}$ are the disjoint pieces of the boundary of the
    compact $R$-orientable $n$-manifold $M:= \{ a \leq f\leq b\}$.
    Now, by excision (which can be used because the boundary of
    $M$ admits a collar neighborhood)
    \begin{equation*}
        H^*(M,A) \simeq H^*\big(\{ f\leq b\},\{ f\leq a\}\big)   \text{ and } 
        H_*(M,B) \simeq H_*\big(\{ -f\leq -a \},\{ -f\leq -b\}\big).
    \end{equation*}
    Finally, one has the duality isomorphism
    \begin{equation*}
        PD : H^*\big(\{ f\leq b\},\{ f\leq a\}\big)
        \xrightarrow{\simeq}
        H_{n-*}\big(\{ -f\leq -a \},\{ -f\leq -b\}\big).
    \end{equation*}

    Let us apply the same idea in $W:= \CP^N$ with $N:=(n+mn_0)(d+1)-1$:
    \begin{align*}
        G^*_{(a,b)}(\bsigma,m) &=
        H^{(n+mn_0-1)(d+1)+*}\left(
            \left\{ \widehat{F}_{\bsigma_{m,b}} \leq 0 \right\},
        \left\{ \widehat{F}_{\bsigma_{m,a}} \leq 0 \right\} \right)\\
                               &\simeq
        H_{2N-((n+mn_0-1)(d+1)+*)}\left(
            \left\{ \widehat{F}_{\bsigma_{m,a}} \geq 0 \right\},
        \left\{ \widehat{F}_{\bsigma_{m,b}} \geq 0 \right\} \right),
    \end{align*}
    where we have used that $0$ is not a critical value of either
    $\widehat{F}_{\bsigma_{m,a}}$ or $\widehat{F}_{\bsigma_{m,b}}$. 
    This last homology group is isomorphic to
    \begin{equation*}
        H_{(n+mn_0-1)(d+1)+(2d-*)}\left(
            \left\{ \widehat{F}_{(\bsigma^{-1})_{m,-a}} \leq 0 \right\},
        \left\{ \widehat{F}_{(\bsigma^{-1})_{m,-b}} \leq 0 \right\} \right)
        = G_{2d-*}^{(-b,-a)}(\bsigma^{-1},m).
    \end{equation*}
    Indeed, according to identity (\ref{eq:Finverse}),
    this homology group is naturally isomorphic to the homology group of a pair
    of sublevel sets of functions $-\widehat{F}_{(\bsigma_{m,t})^{-1}}$.
    The conclusion follows by applying
    Proposition~\ref{prop:isotopy} to an interpolation
    from $(\bsigma_{m,t})^{-1}$ to $(\bsigma^{-1})_{m,-t}$.

    Let us precise the interpolation argument.
    One has $(\bsigma_{m,t})^{-1} = ((\bdelta^{(m)}_t)^{-1},\bsigma^{-1})$.
    According to (\ref{eq:Fcyclic}), the sublevel set induced by this tuple
    is homeomorphic to the one induced by $(\bsigma^{-1},(\bdelta^{(m)}_t)^{-1})$
    so it is enough to find an interpolation from $(\bdelta^{(m)}_t)^{-1}$
    to $\bdelta^{(m)}_{-t}$.
    By definition, 
    \begin{equation*}
        (\bdelta^{(m)}_t)^{-1} = (\bepsilon^{qn_0},\delta_s^{-1},\delta_1^{-1},
        \ldots,\delta_1^{-1}) 
    \end{equation*}
    with $s=t-\lfloor t\rfloor$.
    We remark that $\delta_t^{-1} = \delta_{-t}$.
    It is then easy to find the wanted homotopy
    among tuples of the form $(\delta_{t_1},\delta_{t_2},
    \ldots)$ with $t_1 + t_2 +\cdots = -t$.
\end{proof}

Using both equivalent definitions of the spectral values $c_k(\bsigma)$,
the Poincaré duality implies the following identity.

\begin{cor}\label{cor:spectralinverse}
    Let $\bsigma$ be a tuple of small $\C$-equivariant
    Hamiltonian diffeomorphisms, then
    \begin{equation*}
        c_k (\bsigma^{-1}) = - c_{d-k} (\bsigma),\quad
        \forall k\in\Z.
    \end{equation*}
\end{cor}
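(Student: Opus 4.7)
The plan is to deduce Corollary~\ref{cor:spectralinverse} from Proposition~\ref{prop:poincareduality} by purely formal manipulation of the two definitions of spectral values recalled just before its statement. The key observation is that Poincaré duality intertwines the (contravariant) cohomology restriction maps with the (covariant) inclusion-induced maps on homology, and that both $G^{2(d-k)}_{(-\infty,+\infty)}(\bsigma)$ and $G_{2k}^{(-\infty,+\infty)}(\bsigma^{-1})$ are free of rank one by Theorem~\ref{thm:spectral}.

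First, I would start from the cohomological formula
\[
    c_{d-k}(\bsigma) = \inf\left\{ t\in\R \ \middle|\ e^{d-k} \notin \ker\bigl(G^{2(d-k)}_{(-\infty,+\infty)}(\bsigma) \to G^{2(d-k)}_{(-\infty,t)}(\bsigma)\bigr)\right\}
\]
and transport the condition through the Poincaré duality isomorphisms
\[
    G^{2(d-k)}_{(-\infty,+\infty)}(\bsigma) \xrightarrow{\simeq} G_{2k}^{(-\infty,+\infty)}(\bsigma^{-1}), \qquad G^{2(d-k)}_{(-\infty,t)}(\bsigma) \xrightarrow{\simeq} G_{2k}^{(-t,+\infty)}(\bsigma^{-1}).
\]
Naturality of $PD$ with respect to inclusion morphisms, asserted in Proposition~\ref{prop:poincareduality}, ensures that the restriction map in cohomology corresponds exactly to the inclusion-induced map $G_{2k}^{(-\infty,+\infty)}(\bsigma^{-1}) \to G_{2k}^{(-t,+\infty)}(\bsigma^{-1})$ in homology. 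Since the groups in the top line of the square are free of rank one and $PD$ is an isomorphism, $PD(e^{d-k})$ must be a unit multiple of $\alpha_k$; hence the vanishing of $e^{d-k}$ in $G^{2(d-k)}_{(-\infty,t)}(\bsigma)$ is equivalent to the vanishing of $\alpha_k$ in $G_{2k}^{(-t,+\infty)}(\bsigma^{-1})$.

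Next, I would apply the long exact sequence of the triple $-\infty<-t<+\infty$, which rephrases the latter condition as $\alpha_k \in \im\bigl(G_{2k}^{(-\infty,-t)}(\bsigma^{-1}) \to G_{2k}^{(-\infty,+\infty)}(\bsigma^{-1})\bigr)$. Chaining the equivalences and using the homological definition of $c_k(\bsigma^{-1})$ gives
\[
    t > c_{d-k}(\bsigma) \quad\Longleftrightarrow\quad \alpha_k \notin \im\bigl(G_{2k}^{(-\infty,-t)}(\bsigma^{-1}) \to G_{2k}^{(-\infty,+\infty)}(\bsigma^{-1})\bigr) \quad\Longleftrightarrow\quad t > -c_k(\bsigma^{-1}),
\]
and taking the infimum over $t$ yields $c_{d-k}(\bsigma) = -c_k(\bsigma^{-1})$, i.e.\ the stated identity.

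No serious obstacle is expected: the entire argument is formal once the naturality of $PD$ from Proposition~\ref{prop:poincareduality} and the rank-one assertions of Theorem~\ref{thm:spectral} are in hand. The only delicate point is the possible off-by-one behaviour at the boundary values $t=c_{d-k}(\bsigma)$ and $-t=c_k(\bsigma^{-1})$, which is handled uniformly by the right-continuity of our persistence modules so that the two infima coincide.
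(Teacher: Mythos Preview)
Your argument is correct and follows precisely the route the paper indicates: the paper states only that the corollary follows from Poincar\'e duality together with the two equivalent definitions of the spectral values, and you have spelled out exactly this passage via the naturality of $PD$, the rank-one property from Theorem~\ref{thm:spectral}, and the long exact sequence. Nothing is missing; the boundary-value caveat you raise is handled by the persistence-module conventions as you note.
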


Finally, let us apply the composition morphisms to spectral classes
in order to prove the sub-additivity of the spectral values.

\begin{prop}\label{prop:compositionSpectral}
    Let $m,m'\in\N$ and $k,l\in\Z$ be such that $c_k(\bsigma)\in I_m$ and 
    $c_l(\bsigma')\in I_{m'}$
    and let $t\in (c_k(\bsigma),m)$ and $t'\in (c_l(\bsigma'),m')$.
    Let $b_k\in HZ_{2k}(\bsigma_{m,t})$ and $b'_l\in HZ_{2l}(\bsigma'_{m',t'})$
    be classes associated with the respective spectral classes $\alpha_k$ and $\alpha_l'$
    in the way expressed in Proposition~\ref{prop:spectral}.
    Then the composition morphism
    \begin{equation*}
        HZ_*(\bsigma_{m,t}) \otimes HZ_*(\bsigma'_{m',t'}) \to
        HZ_{*-2d}((\bsigma,\bepsilon,\bsigma')_{m+m',t+t'})
    \end{equation*}
    maps the class $b_k\otimes b'_l$ to a class $b''_{k+l-d}$
    that is sent to the $[\CP^r]\in H_*(\CP^N)$ of appropriate degree
    under the inclusion morphism.
\end{prop}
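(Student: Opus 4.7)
The plan is to transfer the composition to the ambient projective spaces via the naturality square (\ref{dia:compiipj}), where the homology projective join $\pj_*$ is explicit on fundamental classes of linear projective subspaces, and then invoke the characterization of spectral classes from Proposition~\ref{prop:spectral}.

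First, by Proposition~\ref{prop:spectral} applied to $b_k$ and $b'_l$, the images under the inclusions $Z(\bsigma_{m,t})\hookrightarrow \CP^{N_1}$ and $Z(\bsigma'_{m',t'})\hookrightarrow \CP^{N_2}$ are the fundamental classes $[\CP^{k+r_1/2}]$ and $[\CP^{l+r_2/2}]$, where $r_1:=(n_1+mn_0-1)(d+1)$, $r_2:=(n_2+m'n_0-1)(d+1)$, and $n_1,n_2$ denote the sizes of $\bsigma,\bsigma'$. Since the geometric projective join of two disjoint linear subspaces $\CP^a$ and $\CP^b$ embedded in $\CP^{N''}$ is itself a linear $\CP^{a+b+1}$, naturality of $\pj_*$ yields
\begin{equation*}
    \pj_*\bigl([\CP^{k+r_1/2}] \times [\CP^{l+r_2/2}]\bigr) = [\CP^{k+l+1+(r_1+r_2)/2}]
\end{equation*}
in $H_*(\CP^{N''})$. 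This is the same geometric fact already encoded in the product formula $a_k^{(n)} \compi a_l^{(n')} = a_{k+l-d}^{(n+n'+1)}$ used in the construction of $\theta_m^{m+1}$.

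The commutativity of (\ref{dia:compiipj}) then ensures that the image of $b_k \compii b'_l$ under the inclusion $Z((\bsigma,\bepsilon,\bsigma')_{m+m',t+t'}) \hookrightarrow \CP^{N''}$ equals the right-hand side above. Setting $r'':=(n_1+n_2+(m+m')n_0)(d+1)$ for the shift of the composed tuple, the arithmetic identity $r_1+r_2+2(d+1)=r''$ rewrites the exponent as $(k+l-d)+r''/2$. Thus $b''_{k+l-d}:=b_k\compii b'_l$ has degree $2(k+l-d)$ and is sent to $[\CP^{(k+l-d)+r''/2}]$ in the ambient projective space, which by Proposition~\ref{prop:spectral} characterizes it as the class associated with the spectral value $\alpha_{k+l-d}$ of $(\bsigma,\bepsilon,\bsigma')$.

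The argument is mostly bookkeeping; the only substantive step is the join-of-linear-subspaces identity for $\pj_*$, which is implicit in the paper's earlier product computations. The main obstacle I anticipate is simply tracking three grading shifts in parallel and verifying the degree identity $r_1+r_2+2(d+1)=r''$ carefully, so that the degree of the resulting class matches the one produced by Proposition~\ref{prop:spectral} for the composed tuple.
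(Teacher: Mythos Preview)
Your proof is correct and follows the same route as the paper: the paper's proof is the single sentence ``This is a direct consequence of the commutativity of (\ref{dia:compiipj}),'' and you have simply unpacked that consequence by invoking Proposition~\ref{prop:spectral} for the characterization of $b_k,b'_l$, Proposition~\ref{prop:pjuk} for $\pj_*([\CP^a]\times[\CP^b])=[\CP^{a+b+1}]$, and verifying the shift identity $r_1+r_2+2(d+1)=r''$. One minor remark: your final sentence overshoots slightly, since Proposition~\ref{prop:spectral} only gives the implication from spectral class to image $[\CP^r]$, not the converse; but the proposition you are proving only asks for the image under inclusion, so this extra claim is neither needed nor used.
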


\begin{proof}
    This is a direct consequence of the commutativity of (\ref{dia:compiipj}).
\end{proof}

\begin{cor} Given any tuples $\bsigma$ and $\bsigma'$ of small
    $\C$-equivariant Hamiltonian diffeomorphisms, one has
    \begin{equation*}
        c_{k+l-d}((\bsigma,\bepsilon,\bsigma')) \leq
        c_k(\bsigma) + c_l(\bsigma'),\quad
        \forall k,l\in\Z.
    \end{equation*}
\end{cor}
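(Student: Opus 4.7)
The plan is to deduce the sub-additivity directly from Proposition~\ref{prop:compositionSpectral} together with the characterization of the spectral classes $\alpha_k$ given by Proposition~\ref{prop:spectral}. Fix $k,l\in\Z$ and pick any $t>c_k(\bsigma)$ and $t'>c_l(\bsigma')$; by choosing $m,m'\in\N$ sufficiently large one can assume $t\in(-m,m)$ and $t'\in(-m',m')$ and that neither $t$ nor $t'$ is an action value.

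By Proposition~\ref{prop:spectral}, I obtain classes $b_k\in HZ_{2k}(\bsigma_{m,t})$ and $b'_l\in HZ_{2l}(\bsigma'_{m',t'})$ whose images in $G_{2k}^{(-K,t)}(\bsigma,m)$ and $G_{2l}^{(-K',t')}(\bsigma',m')$ represent the spectral classes $\alpha_k$ and $\alpha_l$ after stabilization $\theta_m^\infty,\theta_{m'}^\infty$, and which are sent under the inclusion of sublevel sets into the ambient projective space to the classes $[\CP^{k+r/2}]$ and $[\CP^{l+r'/2}]$ respectively. Then Proposition~\ref{prop:compositionSpectral} gives a class $b''_{k+l-d}\in HZ_{2(k+l-d)}((\bsigma,\bepsilon,\bsigma')_{m+m',t+t'})$ which is the image of $b_k\otimes b'_l$ under $\compii$ and which maps, under the inclusion of the sublevel set into the appropriate $\CP^{N''}$, to a class $[\CP^{r''}]$ of the correct degree.

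Next I push $b''_{k+l-d}$ forward along the natural map
\begin{equation*}
HZ_{2(k+l-d)}((\bsigma,\bepsilon,\bsigma')_{m+m',t+t'}) \longrightarrow G_{2(k+l-d)}^{(-\infty,t+t')}((\bsigma,\bepsilon,\bsigma'))
\end{equation*}
(composing with the direct system maps $\theta^\infty$). By the characterization in Proposition~\ref{prop:spectral}, which says the spectral class is precisely the one whose representative at finite level maps to $[\CP^{r''}]$ in the ambient projective space, the resulting element in $G_{2(k+l-d)}^{(-\infty,+\infty)}((\bsigma,\bepsilon,\bsigma'))$ is exactly $\alpha_{k+l-d}$. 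Therefore $\alpha_{k+l-d}$ lies in the image of $G_*^{(-\infty,t+t')}((\bsigma,\bepsilon,\bsigma'))\to G_*^{(-\infty,+\infty)}((\bsigma,\bepsilon,\bsigma'))$, which by definition of $c_{k+l-d}$ forces $c_{k+l-d}((\bsigma,\bepsilon,\bsigma'))\leq t+t'$. Taking the infimum over $t>c_k(\bsigma)$ and $t'>c_l(\bsigma')$ yields the desired inequality.

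The only subtle point in this outline is the verification that the element in $G_*^{(-\infty,+\infty)}$ produced by the composition is genuinely the spectral class $\alpha_{k+l-d}$, and not some other class in the same degree; this is where one uses the explicit description of $\alpha_k$ as the unique stabilized class whose representative maps to $[\CP^{\bullet}]$ in the ambient projective space. Once that identification is in hand, everything else is a straightforward chase through the diagrams set up in Sections~\ref{se:gfhomology} and \ref{se:propertiesG}.
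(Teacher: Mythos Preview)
Your proposal is correct and follows exactly the approach the paper intends: the corollary is stated in the paper without proof, as it is meant to be an immediate consequence of Proposition~\ref{prop:compositionSpectral} together with the characterization of spectral classes in Proposition~\ref{prop:spectral}, and you have spelled out precisely those details. The ``subtle point'' you flag is handled by the fact that $G_{2(k+l-d)}^{(-\infty,+\infty)}((\bsigma,\bepsilon,\bsigma'))$ is free of rank one (Theorem~\ref{thm:spectral}), so any class whose finite-level representative maps to the correct $[\CP^{r''}]$ must be a unit multiple of $\alpha_{k+l-d}$, which is all that is needed for the definition of $c_{k+l-d}$.
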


As explained in Section~\ref{se:outline},
we can now associate to every $\bsigma$ a persistence module
$(G_*^{(-\infty,t)}(\bsigma))_t$ that satisfies
the ``periodicity'' property
$G_*^{(-\infty,t+1)}(\bsigma) \simeq G_{*+2(d+1)}^{(-\infty,t)}(\bsigma)$,
the isomorphism being an isomorphism of persistence module
according to the naturality of (\ref{iso:periodic}).
While discussing barcodes properties,
we assume that the persistence module is over a field
and the number of associated fixed points in $\CP^d$
is finite.
Since this periodicity property shifts the degree
by a constant positive integer $2(d+1)$, it
induces a permutation of the bars of the barcode 
sending a bar $[a,b)$ on a bar $[a+1,b+1)$ that
generates a free $\Z$-action on the bars.
A family of representatives of the bars is given
by the union of the barcodes of
$(G_k^{(-\infty,t)}(\bsigma))_t$ for $0\leq k\leq 2d+1$.
For instance, Figure~\ref{fig:barcode} represents a part of
the barcode of some $\bsigma$ associated with a Hamiltonian
diffeomorphism of $\CP^1$.
This barcode has 2 $\Z$-orbits of finite bars
and $d+1=2$ $\Z$-orbits of infinite bars corresponding
to the spectral values $c_1+\Z$ and $c_2+\Z$
where $c_k:=c_k(\bsigma)$.

\begin{figure}
\begin{center}
\begin{small}
\def\svgwidth{1\textwidth}
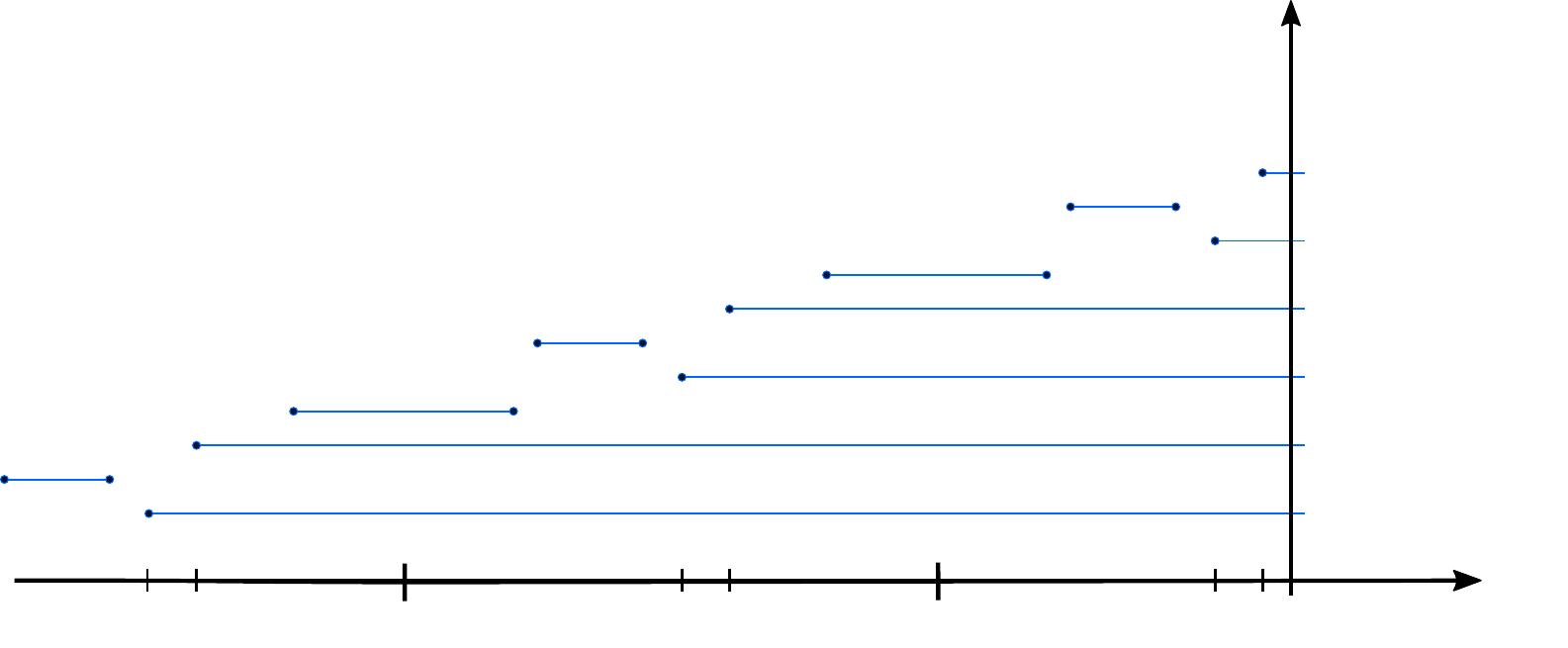
\end{small}
\caption{Barcode of a Hamiltonian diffeomorphism of $\CP^1$
    in the neighborhood of $[k,k+1]$ for some $k\in\Z$
(bars of degree less than 2 are missing).}
\label{fig:barcode}
\end{center}
\end{figure}

\begin{lem}\label{lem:extremities}
    Let $\bsigma$ be a tuple of $\C$-equivariant Hamiltonian
    diffeomorphisms with a finite number of fixed $\C$-lines
    and let $a<b$ that are not action values of $\bsigma$.
    For every field $\F$,
    the number of bars of the barcode of $\bsigma$
    over $\F$ that
    intersect $t=a$ or $t=b$ but not both
    is equal to $\dim G_*^{(a,b)}(\bsigma;\F)$.
\end{lem}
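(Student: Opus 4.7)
The plan is to read off both sides of the equality from the barcode decomposition of the graded persistence module $(G_*^{(-\infty,t)}(\bsigma;\F))_t$ and compare them \emph{via} the long exact sequence of the pair. First, for each degree $k$ separately, the persistence module $(G_k^{(-\infty,t)}(\bsigma;\F))_t$ is of finite type on the compact interval $[a,b]$: indeed, since $\bsigma$ has finitely many fixed $\C$-lines, the finite-dimensional models $G_k^{(a,b)}(\bsigma,m;\F)$ coming from sublevel sets of $\widehat{F}_{\bsigma_{m,t}}$ have only finitely many critical values in $[a,b]$, and the isomorphism $\theta_m^\infty$ passes this finiteness to the direct limit. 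By the normal form theorem recalled in Section~\ref{se:persistence}, we may therefore decompose each such persistence module into bars. Call $\bB_k$ the (locally finite) collection of bars of degree $k$.

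Next I would translate both sides of the claimed equality into bar-counts. On the one hand, a bar $I\in\bB_k$ contains $t$ in its interior (or in its half-open closure) iff the class it represents is non-zero in $G_k^{(-\infty,t)}(\bsigma;\F)$, so the number of bars intersecting exactly one of $\{t=a,t=b\}$ decomposes as
\begin{equation*}
    \#\{I \text{ crosses only } a\} + \#\{I \text{ crosses only } b\}
    = \dim \ker i_k + \dim \operatorname{coker} i_k,
\end{equation*}
where $i_k : G_k^{(-\infty,a)}(\bsigma;\F) \to G_k^{(-\infty,b)}(\bsigma;\F)$ is the persistence morphism: a bar crossing only $a$ contributes to the kernel (it dies between $a$ and $b$), and a bar crossing only $b$ contributes to the cokernel (it is born between $a$ and $b$), while a bar crossing both is sent isomorphically and contributes to neither.

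On the other hand, the long exact sequence of Section~\ref{se:direct},
\begin{equation*}
    \cdots \to G_k^{(-\infty,a)}(\bsigma;\F) \xrightarrow{i_k}
    G_k^{(-\infty,b)}(\bsigma;\F) \to G_k^{(a,b)}(\bsigma;\F)
    \xrightarrow{\partial} G_{k-1}^{(-\infty,a)}(\bsigma;\F) \xrightarrow{i_{k-1}} \cdots,
\end{equation*}
together with the hypothesis that $a,b$ are not action values (so that this sequence indeed identifies the extended groups with the inverse-limit groups at the endpoints), yields the short exact sequence
\begin{equation*}
    0 \to \operatorname{coker} i_k \to G_k^{(a,b)}(\bsigma;\F) \to \ker i_{k-1} \to 0,
\end{equation*}
hence $\dim G_k^{(a,b)}(\bsigma;\F) = \dim\operatorname{coker} i_k + \dim\ker i_{k-1}$. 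Summing over $k\in\Z$ and re-indexing the kernel term absorbs both contributions to bars crossing only one endpoint, in any degree, so the total equals $\dim G_*^{(a,b)}(\bsigma;\F)$.

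The only real point of care is the finiteness/locally-finite barcode step: I expect the main obstacle to be justifying rigorously that the persistence module in each degree is of finite type on $[a,b]$ (so that the normal form theorem applies and the kernel/cokernel dimensions are actually finite and well-defined). Once that is in hand, the comparison of dimensions via the long exact sequence is formal.
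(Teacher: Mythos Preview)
Your proof is correct and follows essentially the same route as the paper: both invoke the normal form theorem to identify bars crossing only $a$ (resp.\ only $b$) with $\ker i$ (resp.\ $\operatorname{coker} i$) of the inclusion morphism, then extract from the long exact sequence of the triple the short exact sequence $0\to\operatorname{coker} i\to G_*^{(a,b)}(\bsigma;\F)\to\ker i\to 0$ to conclude. The paper's version works with the total grading at once and chooses explicit bases $(\alpha_i,\delta_j^-)$ and $(\beta_k,\delta_j^+)$ adapted to the barcode, while you phrase it degree by degree and sum, but this is a purely cosmetic difference.
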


\begin{proof}
    Let us consider the long exact sequence
    \begin{multline}\label{eq:lesGF}
        \cdots \xrightarrow{\partial_{*+1}}
        G_*^{(-\infty,a)}(\bsigma;\F) \to
        G_*^{(-\infty,b)}(\bsigma;\F) \to
        G_*^{(a,b)}(\bsigma;\F)\\
        \xrightarrow{\partial_*}
        G_{*-1}^{(-\infty,a)}(\bsigma;\F) \to
        G_{*-1}^{(-\infty,b)}(\bsigma;\F) \to \cdots.
    \end{multline}
    Applying the normal form theorem of persistence modules,
    one can find bases
    $((\alpha_i),(\delta_j^-))$ and $((\beta_k),(\delta_j^+))$
    of the $\F$-vector spaces
    $G_*^{(-\infty,a)}(\bsigma;\F)$ and
    $G_*^{(-\infty,b)}(\bsigma;\F)$ that are in a canonical
    bijection with bars of the barcode intersecting $t=a$
    and $t=b$ respectively,
    $\delta_j^-$ and $\delta_j^+$ being associated with the
    same bar for each $j$ while the bars associated with
    the $\alpha_i$'s do not intersect $t=b$ and
    the bars associated with the $\beta_k$'s do not intersect $t=a$
    (see Figure~\ref{fig:betatot}).
    In other words, the following diagram commutes
    \begin{equation*}
        \begin{gathered}
            \xymatrix{
                G^{(-\infty,a)}_*(\bsigma;\F) =
                \bigoplus_i \F\alpha_i \oplus \bigoplus_j \F\delta_j^- 
                \qquad\qquad\
                \ar@<-15ex>[d]  \ar@<8ex>[d]^-{\simeq}\\
                G^{(-\infty,b)}_*(\bsigma;\F) =
                \qquad\qquad\ \bigoplus_j \F\delta_j^+ \oplus
                \bigoplus_k \F\beta_k
            }
        \end{gathered},
    \end{equation*}
    where the left arrow is the inclusion morphism
    and the right arrow sends $\delta_j^-$ to $\delta_j^+$ for all $j$
    and the $\alpha_i$'s to $0$.
    Let us recall that, according to the finiteness assumption on the
    number of fixed points, the number of $\alpha_i$'s and
    $\beta_k$'s is finite (here $a$ and $b$ are finite).
    With the above diagram, one can extract
    a short exact sequence of finite dimensional vector spaces
    from the long exact sequence (\ref{eq:lesGF})
    \begin{equation*}
        0 \to \bigoplus_k \F\beta_k \to G_*^{(a,b)}(\bsigma;\F)
        \to \bigoplus_i \F\alpha_i \to 0.
    \end{equation*}
    Hence the result.
    \begin{figure}
        \begin{center}
            \begin{small}
                \def\svgwidth{0.6\textwidth}
\begingroup%
  \makeatletter%
  \providecommand\color[2][]{%
    \errmessage{(Inkscape) Color is used for the text in Inkscape, but the package 'color.sty' is not loaded}%
    \renewcommand\color[2][]{}%
  }%
  \providecommand\transparent[1]{%
    \errmessage{(Inkscape) Transparency is used (non-zero) for the text in Inkscape, but the package 'transparent.sty' is not loaded}%
    \renewcommand\transparent[1]{}%
  }%
  \providecommand\rotatebox[2]{#2}%
  \newcommand*\fsize{\dimexpr\f@size pt\relax}%
  \newcommand*\lineheight[1]{\fontsize{\fsize}{#1\fsize}\selectfont}%
  \ifx\svgwidth\undefined%
    \setlength{\unitlength}{261.21957813bp}%
    \ifx\svgscale\undefined%
      \relax%
    \else%
      \setlength{\unitlength}{\unitlength * \real{\svgscale}}%
    \fi%
  \else%
    \setlength{\unitlength}{\svgwidth}%
  \fi%
  \global\let\svgwidth\undefined%
  \global\let\svgscale\undefined%
  \makeatother%
  \begin{picture}(1,0.72591898)%
    \lineheight{1}%
    \setlength\tabcolsep{0pt}%
    \put(0,0){\includegraphics[width=\unitlength,page=1]{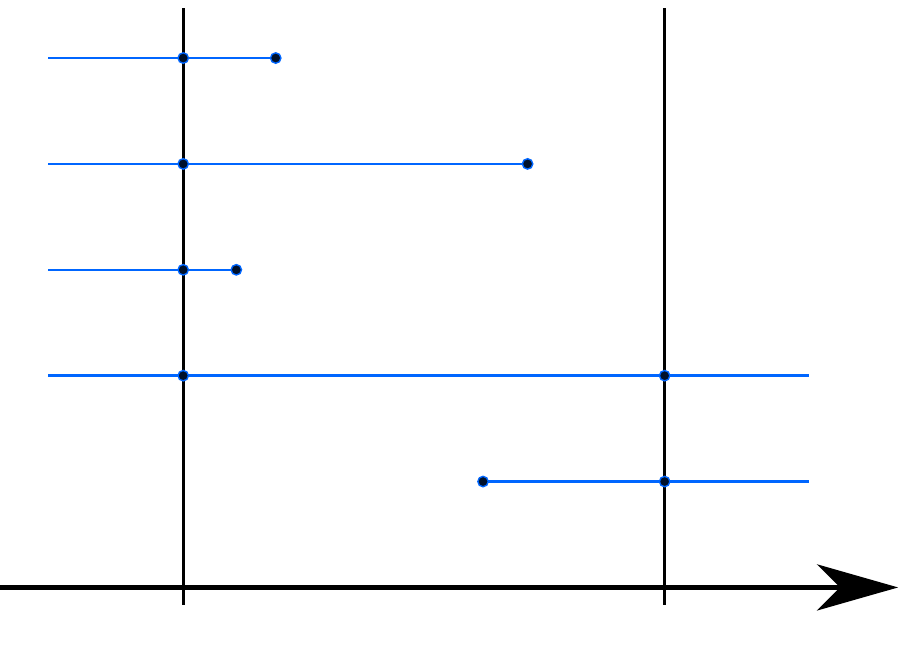}}%
    \put(0.18771684,0.00269918){\makebox(0,0)[lt]{\lineheight{1.25}\smash{\begin{tabular}[t]{l}$a$\end{tabular}}}}%
    \put(0.7183369,0.00269918){\makebox(0,0)[lt]{\lineheight{1.25}\smash{\begin{tabular}[t]{l}$b$\end{tabular}}}}%
    \put(0.76689871,0.22551526){\makebox(0,0)[lt]{\lineheight{1.25}\smash{\begin{tabular}[t]{l}$\beta_k$\end{tabular}}}}%
    \put(0.23422522,0.34398785){\makebox(0,0)[lt]{\lineheight{1.25}\smash{\begin{tabular}[t]{l}$\delta_j^-$\end{tabular}}}}%
    \put(0.76689871,0.34398785){\makebox(0,0)[lt]{\lineheight{1.25}\smash{\begin{tabular}[t]{l}$\delta_j^+$\end{tabular}}}}%
    \put(0.23422522,0.46246045){\makebox(0,0)[lt]{\lineheight{1.25}\smash{\begin{tabular}[t]{l}$\alpha_1$\end{tabular}}}}%
    \put(0.23422522,0.58093313){\makebox(0,0)[lt]{\lineheight{1.25}\smash{\begin{tabular}[t]{l}$\alpha_2$\end{tabular}}}}%
    \put(0.23422522,0.69940573){\makebox(0,0)[lt]{\lineheight{1.25}\smash{\begin{tabular}[t]{l}$\alpha_3$\end{tabular}}}}%
  \end{picture}%
\endgroup%

            \end{small}
            \caption{Relationship between the barcode of $\bsigma$ in the interval
                $(a,b)$ and its persistence module (there are infinitely
                many bars that do not appear in the figure).
                The value $\dim G_*^{(a,b)}(\bsigma;\F)$
        gives the number of $\alpha_i$'s and $\beta_k$'s.}
            \label{fig:betatot}
        \end{center}
    \end{figure}   
\end{proof}

\begin{prop}\label{prop:N}
    Given a tuple $\bsigma$ of $\C$-equivariant Hamiltonian
    diffeomorphisms of $\C^{d+1}\setminus 0$
    with a finite number of fixed $\C$-lines,
    for every field $\F$,
    \begin{equation*}
        N(\bsigma;\F) = d+1 + 2 K(\bsigma;\F),
    \end{equation*}
    where $K(\bsigma;\F)$ is the number of $\Z$-orbits of finite bars of the
    persistence module
    of $\bsigma$ over the field $\F$.
    In other words, $N(\bsigma;\F)$ is the number of (finite) extremities of
    a set of representative bars.
\end{prop}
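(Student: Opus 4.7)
The plan is to count, in two different ways, the total number of bar endpoints lying in a fundamental domain $(a, a+1] \subset \R$ for the $\Z$-action on the barcode induced by Proposition~\ref{prop:periodicity}, where $a$ is chosen not to be an action value of $\bsigma$.

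For the geometric count, Theorem~\ref{thm:spectral} combined with Proposition~\ref{prop:periodicity} gives exactly $d+1$ $\Z$-orbits of infinite bars, and by definition there are $K(\bsigma;\F)$ orbits of finite bars. Each orbit of infinite bars contributes one left endpoint to $(a, a+1]$, coming from the unique representative whose left endpoint falls in that interval. Each orbit of finite bars contributes exactly two endpoints to $(a, a+1]$, namely a left endpoint and a right endpoint carried by (possibly distinct) representatives. Hence
\begin{equation*}
    \#\{\text{bar endpoints in } (a, a+1]\} = (d+1) + 2K(\bsigma; \F).
\end{equation*}

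For the analytic count, fix an action value $c \in (a, a+1]$ and choose $\epsilon > 0$ small enough that $c$ is the unique action value in $(c - \epsilon, c + \epsilon]$. Any bar endpoint in $(c - \epsilon, c + \epsilon]$ must equal $c$, and no bar has both endpoints in this interval, since that would produce a length-zero bar. Lemma~\ref{lem:extremities} applied to $(c - \epsilon, c + \epsilon)$ therefore yields
\begin{equation*}
    \dim G_*^{(c - \epsilon, c + \epsilon)}(\bsigma; \F) = \#\{\text{bar endpoints at } c\}.
\end{equation*}
On the other hand, the Morse-theoretic identification of $G_*^{(a, b)}(\bsigma)$ with the relative homology of sublevel sets of the finite-dimensional action $\action$ recalled in Section~\ref{se:homologysublevel}, together with the Morse deformation lemma around isolated critical points of $\action$ at critical value $c$, gives
\begin{equation*}
    \dim G_*^{(c - \epsilon, c + \epsilon)}(\bsigma; \F) = \sum_{z\,:\,t(z) = c} \dim \lochom_*(\bsigma; z; \F).
\end{equation*}

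Summing both identities over the action values $c \in (a, a+1]$, and using that the free $\Z$-action on cappings associates to each fixed point of $\varphi$ a unique capped orbit with action in $(a, a+1]$, the right-hand sides aggregate to $N(\bsigma; \F)$ while the left-hand sides aggregate to $\#\{\text{bar endpoints in } (a, a+1]\}$. Equating with the geometric count yields $N(\bsigma; \F) = (d+1) + 2K(\bsigma; \F)$. The only point requiring care is matching the $\Z$-action on bars with the $\Z$-action on cappings of fixed points under the identification of the GF-homology with the Morse-theoretic model; this compatibility is built into the naturality of the periodicity isomorphism~(\ref{iso:periodic}).
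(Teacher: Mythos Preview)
Your proof is correct and follows essentially the same approach as the paper: reduce to counting bar endpoints in a fundamental domain for the $\Z$-action, use Lemma~\ref{lem:extremities} on small intervals around each action value to identify the endpoint count with $\dim G_*^{(c-\varepsilon,c+\varepsilon)}(\bsigma;\F)$, and compute the latter via the local homology decomposition. The only difference is presentational: you make the ``geometric count'' $(d+1)+2K(\bsigma;\F)$ explicit, whereas the paper treats that identity as part of the statement and proves only the ``analytic count''.
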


\begin{proof}
    According to the $\Z$-symmetry of the barcode,
    it boils down to proving that the number
    of extremities of the barcode lying inside $[0,1)$
    is equal to $N(\bsigma;\F)$.
    Let $0\leq t_1 < t_2 <\cdots < t_n <1$ be the action
    values of $\bsigma$ in $[0,1)$,
    that is the points where extremities of the barcode could
    appear.
    Let $t_j^\pm := t_j \pm \varepsilon$ where $
    \varepsilon>0$ is strictly less than the minimum
    distance between two action values
    so that $t_j$ is the only action value inside
    $[t_j^-,t_j^+]$.
    According to Lemma~\ref{lem:extremities},
    $\dim G_*^{(t_j^-,t_j^+)}(\bsigma;\F)$ equals
    the number of extremities at $t=t_j$.
    Therefore, we just have to prove that
    \begin{equation*}
        N(\bsigma;\F) =
        \sum_{j=1}^n \dim G_*^{(t_j^-,t_j^+)}(\bsigma;\F).
    \end{equation*}

    Let us denote by $\varphi$ the Hamiltonian diffeomorphism
    associated with $\bsigma$.
    Since $t_j$ is the only action value in $[t_j^-,t_j^+]$,
    an excision argument gives
    \begin{equation*}
        G_*^{(t_j^-,t_j^+)}(\bsigma) \simeq
        \bigoplus_z \lochom_*(\bsigma;z,t_j),
    \end{equation*}
    where the direct sum is over the fixed points $z\in\CP^d$
    of $\varphi$ with action value $t_j$
    (see \cite[Section~5.5]{periodicCPd}).
    By taking these isomorphisms over the field $\F$ for all $j$,
    the conclusion follows.
\end{proof}

\section{Uniform bound on $\betamax$}\label{se:betamax}

\begin{thm}\label{thm:betamax}
    For every tuple of small $\C$-equivariant Hamiltonian diffeomorphisms
    $\bsigma$ generating a Hamiltonian diffeomorphism
    of $\CP^d$ with finitely many fixed points,
    the longest finite bar of its barcode is not greater than 1:
    \begin{equation*}
        \betamax(\bsigma) \leq 1.
    \end{equation*}
\end{thm}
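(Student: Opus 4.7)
The plan is to construct, following Shelukhin's strategy in the Floer setting, an explicit interleaving between the persistence module $(G_*^{(-\infty,t)}(\bsigma))_t$ and the persistence module $(G_*^{(-\infty,t)}(\bepsilon))_t$ associated with the trivial tuple. Since the barcode of $\bepsilon$ consists only of infinite bars, the interleaving will force every finite bar of $\bsigma$ to have length bounded by the interleaving parameter, which in turn will be controlled by the top spectral invariants $c_d(\bsigma)$ and $c_d(\bsigma^{-1})$.

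More precisely, for $\varepsilon>0$ small, Proposition~\ref{prop:spectral} produces spectral classes $b^\sigma\in HZ_{2d}(\bsigma_{m,c_d(\bsigma)+\varepsilon})$ and $b^{\sigma^{-1}}\in HZ_{2d}(\bsigma^{-1}_{m,c_d(\bsigma^{-1})+\varepsilon})$ realizing $\alpha_d$ for $\bsigma$ and $\bsigma^{-1}$ respectively. Since the tuples $(\bsigma,\bepsilon,\bsigma^{-1})$ and $(\bsigma^{-1},\bepsilon,\bsigma)$ both generate the identity Hamiltonian flow, Section~\ref{se:interpolation} provides interpolation isomorphisms identifying their GF-homology with that of $\bepsilon$ (after appropriate $\eta_k$-reductions). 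Composing with these spectral classes and then with the interpolation isomorphisms defines degree-preserving morphisms of persistence modules
\begin{equation*}
    \Psi_t:G_*^{(-\infty,t)}(\bsigma)\to G_*^{(-\infty,t+c_d(\bsigma^{-1})+\varepsilon)}(\bepsilon),\quad \beta\mapsto b^{\sigma^{-1}}\compii\beta,
\end{equation*}
\begin{equation*}
    \Phi_t:G_*^{(-\infty,t)}(\bepsilon)\to G_*^{(-\infty,t+c_d(\bsigma)+\varepsilon)}(\bsigma),\quad \gamma\mapsto b^{\sigma}\compii\gamma.
\end{equation*}
Degree preservation holds because $\compii$ shifts degree by $-2d$, compensated by the degree $2d$ of the spectral classes.

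To verify that these are interleaving maps, I use associativity of $\compii$ to write
$\Phi_{t+c_d(\bsigma^{-1})+\varepsilon}\circ\Psi_t(\beta)=(b^\sigma\compii b^{\sigma^{-1}})\compii\beta$.
By Proposition~\ref{prop:compositionSpectral}, $b^\sigma\compii b^{\sigma^{-1}}$ represents (after the interpolation isomorphism $(\bsigma,\bepsilon,\bsigma^{-1})\sim\bepsilon$) a spectral class for $\alpha_{2d-d}=\alpha_d$ of $\bepsilon$ at level $c_d(\bsigma)+c_d(\bsigma^{-1})+2\varepsilon$. Because $c_d(\bepsilon)=0$, this class is precisely the image of the unit class $\alpha_d(\bepsilon)$ under the inclusion morphism up to that level. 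The commutativity of diagram (\ref{dia:compositionInclusion}), together with the defining property of $\theta_m^{m+1}$ as composition with $a_d$, then shows that the composite $\Phi\circ\Psi$ coincides with the inclusion $G_*^{(-\infty,t)}(\bsigma)\to G_*^{(-\infty,t+c_d(\bsigma)+c_d(\bsigma^{-1})+2\varepsilon)}(\bsigma)$. The symmetric computation gives the analogous identity for $\Psi\circ\Phi$, so $\Psi$ and $\Phi$ realize a $(c_d(\bsigma^{-1})+\varepsilon,c_d(\bsigma)+\varepsilon)$-interleaving.

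Since the barcode of $\bepsilon$ has no finite bars, the isometry theorem implies that each finite bar of the barcode of $\bsigma$ has length at most $c_d(\bsigma)+c_d(\bsigma^{-1})+2\varepsilon$. Letting $\varepsilon\to 0$ and combining Corollary~\ref{cor:spectralinverse} (giving $c_d(\bsigma^{-1})=-c_0(\bsigma)$) with the monotonicity and periodicity from Theorem~\ref{thm:spectral} (giving $c_0(\bsigma)\leq c_d(\bsigma)\leq c_{d+1}(\bsigma)=c_0(\bsigma)+1$) yields
\begin{equation*}
    \betamax(\bsigma)\leq c_d(\bsigma)-c_0(\bsigma)\leq 1.
\end{equation*}
The principal technical obstacle is bookkeeping through the web of interpolation isomorphisms and $\eta_k$-reductions: one must check that the class $b^\sigma\compii b^{\sigma^{-1}}$ really is transported to the spectral class of $\alpha_d(\bepsilon)$ under the interpolation $(\bsigma,\bepsilon,\bsigma^{-1})\sim\bepsilon^k$, and that $\Phi\circ\Psi$ literally agrees with the inclusion morphism after all these identifications. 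This is where the naturality results of Section~\ref{se:interpolation}, especially Proposition~\ref{prop:interpolationComposition}, are indispensable.
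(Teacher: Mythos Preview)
Your proposal is correct and follows essentially the same approach as the paper: build interleaving morphisms using composition $\compii$ with the spectral classes $b^\sigma$ and $b^{\sigma^{-1}}$, then conclude via $c_d(\bsigma)+c_d(\bsigma^{-1})=c_d(\bsigma)-c_0(\bsigma)\leq 1$. The one notable difference is that the paper avoids the full isometry theorem by proving a one-sided lemma (Lemma~\ref{lem:betabound}): if $W$ has no finite bars and there exist persistence morphisms $f:(V^t)\to(W^{t+\delta})$, $g:(W^t)\to(V^{t+\delta'})$ with only the single identity $g_{t+\delta}f_t=\pi_t^{t+\delta+\delta'}$, then already $\betamax(V)\leq\delta+\delta'$. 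This spares the paper from verifying your ``symmetric computation'' $\Psi\circ\Phi=\mathrm{inc}$, and also sidesteps the finiteness hypotheses needed to invoke the normal form/isometry theorems directly on the full $\Z$-periodic module.
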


As a matter of fact, the proof allows us to give
the more precise bound:
\begin{equation*}
    \betamax(\bsigma) \leq c_{d+k}(\bsigma) - c_k(\bsigma),
\end{equation*}
for all $k\in \Z$ (in particular, one can always replace $\leq 1$ by $<1$).

In the proof, we will essentially prove that the persistence modules
of $\bsigma$ and $\bepsilon$ are $(c_d(\bsigma),-c_0(\bsigma))$-interleaved.
The isometry theorem between the interleaving distance and
the barcode distance states that the distance between the two associated barcodes
is not more than $c_d(\bsigma)-c_0(\bsigma)$.
Since the barcode of $\bepsilon$ does not have any finite bar,
the conclusion follows.

In order to simplify the proof, we will use a slightly weaker result
than the isometry theorem.
We recall that the maximal length of a finite bar $\betamax(V^t)\geq 0$ in the persistence
module $(V^t)$ can alternatively be defined by
\begin{equation*}
    \betamax(V^t) = \sup \left\{ \beta \geq 0\ |\ \exists t\in\R,\
    \ker(V^t \to V^{t+\beta}) \neq \ker(V^t \to V^{+\infty}) \right\}.
\end{equation*}

\begin{lem}\label{lem:betabound}
    Let $((V^t),\pi)$ be a persistence module and
    $((W^t),\kappa)$ be a persistence module without any finite bar.
    If there exist $\delta,\delta'\in\R$ with $\delta+\delta'\geq 0$ and
    $f:(V^t) \to (W^{t+\delta})$ and $g : (W^t)\to (V^{t+\delta'})$ that
    are morphisms of persistence modules such that
    $g_{t+\delta}f_t = \pi_t^{t+\delta+\delta'}$ for all $t\in\R$,
    then $\betamax(V^t)\leq \delta+\delta'$.
\end{lem}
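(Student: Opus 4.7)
The plan is to work directly with the alternative characterization of $\betamax$ recalled just above the lemma. It suffices to show that for every $\beta > \delta + \delta'$ and every $t\in\R$, one has $\ker(\pi_t^{t+\beta}) = \ker(\pi_t^{+\infty})$. The inclusion $\ker(\pi_t^{t+\beta}) \subseteq \ker(\pi_t^{+\infty})$ is automatic, because $\pi_t^{+\infty}$ factors through $\pi_t^{t+\beta}$. The content is thus the reverse inclusion, which says that any class $v \in V^t$ that dies at \emph{some} later time must already be dead by time $t+\delta+\delta'$.

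To prove the reverse inclusion, take $v\in V^t$ such that $\pi_t^s(v)=0$ for some $s\ge t$. Applying the fact that $f$ is a morphism of persistence modules (hence commutes with persistence maps) gives
\begin{equation*}
    \kappa_{t+\delta}^{s+\delta}(f_t(v)) = f_s(\pi_t^s(v)) = 0.
\end{equation*}
The hypothesis that $(W^t)$ has no finite bar enters here: it forces every persistence morphism $\kappa_r^{r'}$ to be injective, so $f_t(v) = 0$. Applying $g_{t+\delta}$ and using the interleaving identity $g_{t+\delta}\circ f_t = \pi_t^{t+\delta+\delta'}$ then yields $\pi_t^{t+\delta+\delta'}(v)=0$. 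In particular $\pi_t^{t+\beta}(v)=0$ for every $\beta\ge \delta+\delta'$, which is the required equality of kernels and gives $\betamax(V^t)\le\delta+\delta'$.

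The only non-formal point in this scheme is extracting the injectivity of the $\kappa_r^{r'}$ from the bare statement ``no finite bar'', and this is what I expect to be the main thing to spell out. For a persistence module of finite type it is immediate from the normal form theorem recalled in Section~\ref{se:persistence}: in a decomposition $W\simeq\bigoplus_i \F(I_i)^{\oplus m_i}$ where every $I_i$ is of the form $[a_i,+\infty)$, the persistence maps on each summand are either zero or the identity on $\F$, and the zero case is ruled out precisely because the bar is infinite. Once this standard observation is made, the entire argument is a single diagram chase and the lemma follows.
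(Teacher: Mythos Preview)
Your proof is correct and follows essentially the same approach as the paper's: both use that the absence of finite bars in $W$ forces the persistence maps $\kappa_r^{r'}$ to be injective, then combine the morphism property $\kappa_{t+\delta}^{s+\delta}\circ f_t = f_s\circ\pi_t^s$ with the identity $g_{t+\delta}\circ f_t = \pi_t^{t+\delta+\delta'}$ in a single diagram chase. The only difference is cosmetic---you argue directly while the paper argues by contradiction---and you are slightly more explicit than the paper about why ``no finite bar'' yields injectivity.
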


\begin{proof}[Proof of Lemma~\ref{lem:betabound}]
    The proof can be summed up by the following diagram:
\begin{equation*}
    \begin{gathered}
        \xymatrix{
            V^t \ar[rr] \ar[rd]^-{f_t}
            && V^{t+\delta+\delta'} \ar[r] &
            V^s \ar[rd]^-{f_s}\\
            & W^{t+\delta} \ar[ur]^-{g_{t+\delta}} \ar[rrr]
            && 
            & W^{s+\delta}
        }
    \end{gathered},
\end{equation*}
    (we do not assume that the right ``square'' is commutative).
    By contradiction, let us assume that
    there exists $t\in\R$ and
    $v\in V^t$ such that $\pi_t^{t+\delta+\delta'}v\neq 0$ and
    $\pi_t^{+\infty}v = 0$.
    Since $\pi_t^{+\infty}v = 0$, there exists $s\geq t+\delta+\delta'$ such that
    $\pi_t^sv=0$.
    By hypothesis, $\pi_t^{t+\delta+\delta'} = g_{t+\delta}f_t$
    so $w:=f_tv \neq 0$.
    Since $(W^t)$ does not have any finite bars,
    $\kappa_{t+\delta}^sw \neq 0$.
    Since $f$ is a morphism of persistence modules,
    $f_s\pi_t^s = 
    \kappa_{t+\delta}^{s+\delta}f_t$
    so $f_s\pi_t^sv
    = \kappa_{t+\delta}^{s+\delta}w \neq 0$.
    A contradiction with $\pi_t^sv = 0$.
\end{proof}

\begin{proof}[Proof of Theorem~\ref{thm:betamax}]
    Let $\varepsilon>0$ and 
let $\eta := c_d(\bsigma) +\varepsilon/2$ and
$\eta' := c_d(\bsigma^{-1}) +\varepsilon/2$.
Let $m>\max(|\eta|,|\eta'|,|c_d(\bsigma)|,|c_d(\bsigma^{-1})|)$.
One can assume that the size $n$ of $\bsigma$ satisfies
$2n = mn_0$ for some integer
$m>\max(|\eta|,|\eta'|,|c_d(\bsigma)|,|c_d(\bsigma^{-1})|)$,
by concatenation of $\bsigma$
with some $\bepsilon^k$.
Let $b_d\in HZ_{2d}(\bsigma_{m,\eta})$ 
and $b'_d\in HZ_{2d}(\bsigma^{-1}_{m,\eta'})$ be classes associated with
the spectral class $\alpha_d$ in the sense of Corollary~\ref{cor:spectralinverse}
(well defined by definition of $\eta$, $\eta'$ and $m$).
In order to simplify the exposition, let us set
$a_d := a_d^{(2n+1)}(0)\in HZ_{2d}(\bepsilon^{2n+1})$
and $a'_d := a_d^{(2n+1)}(\eta+\eta')\in HZ_{2d}(\bepsilon_{m,\eta+\eta'})$
(we recall that $\bepsilon^{2n+1} = \bepsilon^{mn_0+1} = \bepsilon_{m,0}$
by our assumption on $n$).
Let us denote by $\Delta_1$ the interpolation isomorphism
associated with $(\bsigma^{s-1},\bepsilon,\bsigma^{1-s})_{s\in [0,1]}$
and by $\Delta_2$ the interpolation isomorphism
associated with $(\bsigma^{1-s},\bepsilon,\bsigma^{s-1})_{s\in [0,1]}$.
Let us denote by $\widetilde{\Delta}_1$ the interpolation isomorphism
associated with $(\bsigma,\bepsilon,\bsigma^{s-1},\bepsilon,\bsigma^{1-s})_{s\in [0,1]}$
and by $\widetilde{\Delta}_2$ the one associated with
$(\bsigma^{1-s},\bepsilon,\bsigma^{s-1},\bepsilon,\bsigma)_{s\in [0,1]}$.

We define the following morphisms of persistence modules:
\begin{gather*}
    f_t:\left\{
        \begin{array}{c c c}
    G_*^{(-\infty,t)}(\bsigma) &\to&
    G_*^{(-\infty,t+\eta')}(\bepsilon^{2n+1}), \\
    \alpha & \mapsto & 
    \Delta_1 (b'_d \compii \alpha),
\end{array}\right.
 \\
    g_t:\left\{
        \begin{array}{c c c}
            G_*^{(-\infty,t)}(\bepsilon^{2n+1}) &\to&
    G_*^{(-\infty,t+\eta)}(\bsigma), \\
    \alpha &\mapsto & 
    a_d^{-1} \compii \widetilde{\Delta}_2\circ
    \widetilde{\Delta}_1^{-1}(b_d \compii \alpha).
\end{array}\right.
\end{gather*}
Indeed, these morphisms commute with inclusion morphisms
by naturality of the morphisms involved in their definitions.
Let us denote by $\pi_t^s : G^{(-\infty,t)}_*(\bsigma) \to
G_*^{(-\infty,s)}(\bsigma)$ the inclusion morphism for $t\leq s$.

In order to apply Lemma~\ref{lem:betabound}, one
just needs to show that $g_{t+\eta'}\circ f_t = \pi_t^{t+\eta+\eta'}$
for all $t\in\R$.
For all $\alpha\in G_*^{(-\infty,t)}(\bsigma)$, one has
\begin{eqnarray*}
    g_{t+\eta'}\circ f_t(\alpha)
    &=&
    a_d^{-1}\compii 
    \widetilde{\Delta} ( b_d\compii \Delta_1 ( b'_d \compii \alpha ))\\
    &=&
    a_d^{-1}\compii \widetilde{\Delta}_2
    (b_d \compii b'_d \compii \alpha)\\
    &=&
    a_d^{-1}\compii \Delta_2
    (b_d \compii b'_d) \compii \alpha\\
    &=&
    a_d^{-1}\compii a'_d \compii \alpha\\
    &=& \pi_t^{\eta+\eta'}\alpha,
\end{eqnarray*}
where the second and third equality comes from
Proposition~\ref{prop:interpolationComposition}
and the identity $a_d^{-1}\compii a'_d = \pi_t^{\eta+\eta'}$ is a direct
consequence of the commutativity of (\ref{dia:compositionInclusion}).
The conclusion follows from Lemma~\ref{lem:betabound}
since the persistence module of $\bepsilon^{2n+1}$
does not have any finite bar (see the beginning of
Section~\ref{se:propertiesG}) and
\begin{equation*}
    \eta+\eta' = c_d (\bsigma) + c_d(\bsigma^{-1}) + \varepsilon
    = c_d(\bsigma) - c_0(\bsigma) + \varepsilon
    \leq 1 + \varepsilon,
\end{equation*}
where the second equality comes from Corollary~\ref{cor:spectralinverse}.
\end{proof}

\section{Smith inequality}
\label{se:smith}

In this section, we show how the classical Smith inequality (\ref{eq:smithineq})
can be applied to the sublevel sets of generating functions to prove
inequality (\ref{eq:smithbetatot}).
\c C\. inel\. i-Ginzburg used the same kind of argument to prove a Smith inequality between
the dimension of the local homology of a Hamiltonian orbit and its $p$-iterate
for $p$ prime \cite{CG20}.

\subsection{$\Z/(p)$-action of a $p$-iterated generating function}

Let us fix a prime number $p\geq 3$.
Let us fix $t\in\R$ and study the generating function
of $e^{-2i\pi t}\Phi_1$. In order to fix the notation,
we recall that
\begin{equation*}
    F_{\bsigma_{m,t}} (\mathbf{v}) := \sum_{k=1}^{n}
        f_k\left(\frac{v_k + v_{k+1}}{2}\right) +
    \frac{1}{2}\la v_k,iv_{k+1}\ra,
\end{equation*}
where $\mathbf{v}:=(v_1,\ldots,v_n)\in (\C^{d+1})^n$
and the $f_k : \C^{d+1} \to\R$ are $S^1$-invariant and 2-homogeneous.
Thus $F_{\bsigma_{m,t}^p}:(\C^{n(d+1)})^p\to\R$ is invariant under the action of $\Z/(p)$
by cyclic permutation of coordinates generated by
\begin{equation*}
    (\mathbf{v}_1,\mathbf{v}_2,\ldots,\mathbf{v}_p)
    \mapsto
    (\mathbf{v}_p,\mathbf{v}_1,\ldots,\mathbf{v}_{p-1}),
\end{equation*}
(here $\bsigma^p_{m,t}$ means $(\bsigma_{m,t})^p$).
The induced $\widehat{F}_{\bsigma_{m,t}^p}:\CP^{pn(d+1)-1}\to\R$ is then
invariant under the $\Z/(p)$-action by permutation of homogeneous coordinates
induced by
\begin{equation*}
    [\mathbf{v}_1:\mathbf{v}_2:\cdots:\mathbf{v}_p]
    \mapsto
    [\mathbf{v}_p:\mathbf{v}_1:\cdots:\mathbf{v}_{p-1}].
\end{equation*}
Fixed points $(\CP^N)^{\Z/(p)}$ of this action are the disjoint union $\bigsqcup_q P_q$
of the $p$ following $(n(d+1)-1)$-complex projective subspaces:
\begin{equation*}
    P_q := \left\{ \big[\mathbf{v} : \zeta^q \mathbf{v} : \zeta^{2q} \mathbf{v} : \cdots
    : \zeta^{(p-1)q} \mathbf{v} \big] \ |\ [\mathbf{v}]\in \CP^{n(d+1)-1} \right\},
    \quad \zeta := e^{\frac{2i\pi}{p}}.
\end{equation*}

Using the fact that the $f_k$'s are $S^1$-invariant and 2-homogeneous,
\begin{multline*}
    \frac{1}{p}
    F_{\bsigma^p_{m,t}} (\mathbf{v},\zeta^q \mathbf{v},\ldots,\zeta^{q(p-1)} \mathbf{v}) 
    = \sum_{k=1}^{n-1} \left[
        f_k\left(\frac{v_k + v_{k+1}}{2}\right) +
    \frac{1}{2}\la v_k,iv_{k+1}\ra \right] \\
        + f_n\left(\frac{v_n + \zeta^q v_1}{2}\right) +
    \frac{1}{2}\la v_n,i\zeta^q v_1\ra.
\end{multline*}
We apply the linear change of variables $\mathbf{v}\mapsto\mathbf{u}$
given by $u_k := v_k + (-1)^k\frac{1-\zeta^q}{2} v_1$ so that
\begin{equation*}
    \left\{
        \begin{array}{c c c}
            u_1 + u_2 &=& v_1 + v_2, \\
            u_2 + u_3 &=& v_2 + v_3, \\
                      &\vdots& \\
            u_{n-1} + u_n &=& v_{n-1} + v_n, \\
            u_n + u_1 &=& v_n + \zeta^q v_1.
        \end{array}
    \right.
\end{equation*}
A direct computation gives
\begin{equation*}
    \sum_{k=1}^{n-1} 
    \la v_k,iv_{k+1}\ra  + \la v_n,i\zeta^q v_1\ra
    = \sum_{k=1}^n \la u_k,i u_{k+1} \ra - 2\tan\left(\frac{q\pi}{p}\right)\| u_1\|^2,
\end{equation*}
for all integer $q\in\left\{ \frac{1-p}{2},\ldots,\frac{p-1}{2}\right\}$,
so that
\begin{equation*}
    F_{\bsigma^p_{m,t}} (\mathbf{v},\zeta^q \mathbf{v},\ldots,\zeta^{q(p-1)} \mathbf{v})
    = p\left[F_{\bsigma_{m,t}} (\mathbf{u}) - \tan\left(\frac{q\pi}{p}\right)\| u_1\|^2
    \right]
    =: pG_{t,q}(\mathbf{u}),
\end{equation*}
for $q\in\left\{ \frac{1-p}{2},\ldots,\frac{p-1}{2}\right\}$.
This last function $G_{t,q}$ is the fiberwise sum of a generating function
of $e^{-2i\pi t}\Phi_1$ and a generating function of $e^{-2i\pi q/p}$.
We recall that in this case, $\C$-lines of critical points of
this function are in one-to-one correspondence with $\C$-lines of fixed points
of the composed diffeomorphism $e^{-2i\pi (t+q/p)}\Phi_1$
(see the paragraph surrounding Equation~(\ref{eq:fiberwise})).
Let $(f_{s,t})$ be the family of function
\begin{equation*}
    f_{s,t}(\mathbf{u}) := F_{\bsigma_{m,t+(1-s)q/p}}(\mathbf{u}) - 
    \tan\left(s\frac{q\pi}{p}\right)\|u_1\|^2,\quad
    s\in [0,1].
\end{equation*}
The function $f_{s,t}$ is the fiberwise sum of a generating function
of $e^{-2i\pi (t+(1-s)q/p)}\Phi_1$ and a generating function of
$e^{-2i\pi sq/p}$ so
$0$ is a regular value of $f_{s,t}$ if and only if
$e^{-2i\pi (t+q/p)}\Phi_1$ does not have any $\C$-line of fixed points,
that is if and only if $t+q/p$ is not an action value of $\bsigma$.
According to Proposition~\ref{prop:isotopy},
\begin{equation}\label{iso:smith}
    H_*\left( \left\{ \widehat{G}_{b,q} \leq 0\right\},
    \left\{ \widehat{G}_{a,q} \leq 0\right\}\right)
    \simeq G_{*-i_0}^{(a+q/p,b+q/p)}(\bsigma,m)
    \simeq G_{*-i_0}^{(a+q/p,b+q/p)}(\bsigma),
\end{equation}
where $-m\leq a+q/p \leq b+q/p \leq m$, $i_0$ is some integer
and $a+q/p$ and $b+q/p$ are not action values of $\bsigma$.

\subsection{Application of Smith inequality}

According to Smith inequality,
\begin{equation}\label{eq:smithineq}
    \dim H_*(X;\F_p) \geq \dim H_*(X^{\Z/(p)};\F_p),
\end{equation}
where $X$ is a locally compact space or pair such that $H_*(X;\F_p)$ is
finitely generated, a space
on which acts the group $\Z/(p)$ (see for instance
\cite[Chapter IV, \S4.1]{Bor60}).
Here $\dim H_*$ means the total dimension $\sum_k \dim H_k$.

\begin{prop}\label{prop:smith}
    Given any tuple $\bsigma$ of small $\C$-equivariant Hamiltonian diffeomorphisms,
    for every prime number $p$ and every $a\leq b$ such that
    $a+q/p$ and $b+q/p$ are not action values of $\bsigma$
    and $pa$ and $pb$ are not action values of $\bsigma^p$,
\begin{equation*}\label{eq:GSmith}
    \dim G_*^{(pa,pb)}(\bsigma^p;\F_p) \geq
    \sum_{(1-p)/2\leq q\leq (p-1)/2} \dim G_*^{(a+q/p,b+q/p)}(\bsigma;\F_p).
\end{equation*}
\end{prop}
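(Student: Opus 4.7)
The plan is to apply the classical Smith inequality (\ref{eq:smithineq}) to the sublevel-set pair of $\widehat{F}_{\bsigma^p_{m,t}}$ inside $\CP^{pn(d+1)-1}$, equipped with the $\Z/(p)$-action by cyclic permutation of the $p$ blocks of variables, and then to identify the two sides of that inequality with the generating-function homology groups appearing in the statement via the computations carried out earlier in this section.

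First I would fix $m\in\N^*$ large enough so that all of $pa,pb,a+q/p,b+q/p$ lie in $(-m,m)$, and set
\[
X := \left\{\widehat{F}_{\bsigma^p_{m,b}} \leq 0\right\}, \qquad A := \left\{\widehat{F}_{\bsigma^p_{m,a}} \leq 0\right\} \subset \CP^{pn(d+1)-1}.
\]
The assumption that $pa$ and $pb$ are not action values of $\bsigma^p$ guarantees that $0$ is a regular value of $\widehat{F}_{\bsigma^p_{m,a}}$ and $\widehat{F}_{\bsigma^p_{m,b}}$, so $(X,A)$ is a compact pair in $\CP^{pn(d+1)-1}$ with finite-dimensional $\F_p$-homology. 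Using the local convention $\bsigma^p_{m,t} = (\bsigma_{m,t})^p$ together with the independence (up to interpolation isomorphism) of the choice of tuple generating $e^{-2i\pi pt}\Phi_1^p$ proved in Section~\ref{se:interpolation}, applied to the homotopy between $(\bsigma_{m,t})^p$ and $(\bsigma^p)_{m,pt}$ obtained after padding with identities, one identifies, up to a uniform degree shift,
\[
H_*(X,A;\F_p) \simeq G_*^{(pa,pb)}(\bsigma^p;\F_p).
\]

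Next I would handle the fixed-point side. The $\Z/(p)$-permutation of the $p$ blocks preserves $\widehat{F}_{\bsigma^p_{m,t}}$, and its fixed locus in $\CP^{pn(d+1)-1}$ is the disjoint union $\bigsqcup_q P_q$ with $P_q \simeq \CP^{n(d+1)-1}$, as described before the proposition. The change of variables $\mathbf{v}\mapsto\mathbf{u}$ presented in the text gives $\widehat{F}_{\bsigma^p_{m,t}}|_{P_q} = p\,\widehat{G}_{t,q}$, so
\[
X^{\Z/(p)} = \bigsqcup_q \left\{\widehat{G}_{b,q} \leq 0\right\}, \qquad A^{\Z/(p)} = \bigsqcup_q \left\{\widehat{G}_{a,q} \leq 0\right\}.
\]
Additivity of relative homology on disjoint unions, combined with (\ref{iso:smith}) --- applicable because $a+q/p$ and $b+q/p$ are not action values of $\bsigma$ --- then yields, again up to a uniform degree shift,
\[
H_*\bigl(X^{\Z/(p)}, A^{\Z/(p)};\F_p\bigr) \simeq \bigoplus_q G_*^{(a+q/p,\,b+q/p)}(\bsigma;\F_p).
\]

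The proposition now follows from the Smith inequality (\ref{eq:smithineq}) applied to the $\Z/(p)$-pair $(X,A)$ --- the pair version being the standard consequence of (\ref{eq:smithineq}) for each of $X$ and $A$ combined with the five-lemma in the long exact sequences of $(X,A)$ and $(X^{\Z/(p)}, A^{\Z/(p)})$ --- which gives $\dim H_*(X,A;\F_p) \geq \dim H_*\bigl(X^{\Z/(p)}, A^{\Z/(p)};\F_p\bigr)$; taking total $\F_p$-dimensions and combining with the two isomorphisms above produces exactly the asserted inequality. The main technical obstacle is the identification in the first step: the iterated tuple $(\bsigma_{m,t})^p$, which is the one carrying the $\Z/(p)$-symmetry Smith requires, is not literally the tuple $(\bsigma^p)_{m,pt}$ used in the definition of $G_*^{(pa,pb)}(\bsigma^p)$, and bridging them requires the explicit interpolation inside the family of tuples generating $e^{-2i\pi pt}\Phi_1^p$ afforded by Proposition~\ref{prop:isotopy}. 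Once this identification is secured, the rest of the argument is a routine combination of Smith's inequality with the isomorphism (\ref{iso:smith}) on each component of the fixed locus.
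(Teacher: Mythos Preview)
Your proposal is correct and follows essentially the same route as the paper: apply the classical Smith inequality to the $\Z/(p)$-pair $(Z(\bsigma^p_{m,b}),Z(\bsigma^p_{m,a}))$, identify its homology with $G_*^{(pa,pb)}(\bsigma^p)$ via an interpolation between $(\bsigma_{m,t})^p$ and $(\bsigma^p)_{pm,pt}$, and identify the fixed-point side via (\ref{iso:smith}). Two small remarks: the relative Smith inequality does \emph{not} follow from the absolute one by the five-lemma (which yields isomorphisms, not inequalities) --- the paper simply invokes the pair version directly from Borel; and, like the paper, your argument tacitly assumes $p\geq 3$ since the preceding computation of the fixed locus was carried out for odd $p$, so the case $p=2$ still requires the separate treatment of Section~\ref{se:case2}.
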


\begin{proof}
    Let us assume that $p\geq 3$ and refer the reader to Section~\ref{se:case2}
    for the modifications specific to $p=2$.
    By concatenating $\bsigma$ with some $\bepsilon^k$ if needed,
    it is not difficult to find a homotopy interpolating
    $\bsigma^p_{m,t}:=(\bsigma_{m,t})^p$ and $(\bsigma^p)_{pm,pt}$.
    Therefore, the associated interpolation isomorphism gives us
    \begin{equation}\label{iso:sigmap}
        HZ_*(\bsigma^p_{m,b},\bsigma^p_{m,a})
        \simeq
        G_*^{(pa,pb)}(\bsigma^p,pm)
        \simeq G_*^{(pa,pb)}(\bsigma^p).
    \end{equation}

    Now, we apply the Smith inequality (\ref{eq:smithineq}) to the couple
    \begin{equation*}
        X := \left(\left\{ \widehat{F}_{\bsigma_b^p} \leq 0 \right\},
        \left\{ \widehat{F}_{\bsigma_a^p} \leq 0 \right\} \right).
    \end{equation*}
    According to the last section,
    \begin{equation*}
        X^{\Z/(p)} \simeq \bigsqcup_{(1-p)/2\leq q\leq (p-1)/2} 
        \left( \left\{ \widehat{G}_{b,q} \leq 0\right\},
        \left\{ \widehat{G}_{a,q} \leq 0\right\}\right).
    \end{equation*}
    Therefore, Smith inequality (\ref{eq:smithineq}),
    (\ref{iso:sigmap}) and (\ref{iso:smith}) bring the
    conclusion.
\end{proof}

\subsection{Computation of $\betatot$}
\label{se:betatot}

The arguments of this section follow the proof of
Theorem~\ref{thm: beta tot} given by Shelukhin
in Appendix~\ref{se:She} in the realm of Floer theory
that applies to every closed monotone symplectic manifold.

\begin{prop}\label{prop:betatotint}
    Let $\bsigma$ be a tuple of small $\C$-equivariant Hamiltonian
    diffeomorphisms of $\C^{d+1}\setminus 0$ with a finite
    number of associated fixed points in $\CP^d$.
    For all $a\in\R$, all integers $n\in\N^*$ and all fields $\F$,
    \begin{equation*}
        \betatot(\bsigma;\F) = \frac{1}{2} \left(
        \int_0^1 \dim G_*^{(a+t,a+t+n)}(\bsigma;\F)
    \,\ud t - n(d+1) \right).
    \end{equation*}
\end{prop}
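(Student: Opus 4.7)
The plan is to express both sides of the claimed identity in terms of the barcode of $\bsigma$ over $\F$ and to compute the integral orbit-by-orbit. By Lemma~\ref{lem:extremities}, for every $t\in[0,1]$ avoiding the (discrete, hence measure-zero) set of $t$ for which $a+t$ or $a+t+n$ is an action value of $\bsigma$, the dimension $\dim G_*^{(a+t,a+t+n)}(\bsigma;\F)$ equals the number $N_t$ of bars of the barcode intersecting exactly one of the two vertical lines at $a+t$ and $a+t+n$. I would then split $N_t$ as the sum of contributions from infinite bars and from finite bars, and integrate each separately.

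For infinite bars, I would invoke Theorem~\ref{thm:spectral}: the infinite bars are exactly the $[c_k(\bsigma),+\infty)$ for $k\in\Z$, and they satisfy $c_{k+d+1}(\bsigma)=c_k(\bsigma)+1$. An infinite bar cannot contain $a+t$ without also containing $a+t+n$, so it contributes to $N_t$ if and only if $c_k(\bsigma)\in(a+t,a+t+n]$. The set of spectral values is a disjoint union of $d+1$ orbits under translation by $1$, and any half-open real interval of integer length $n$ contains exactly $n$ integer translates of any given real number; hence the infinite contribution to $N_t$ is the constant $n(d+1)$ for all admissible $t$.

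For finite bars, I would use Proposition~\ref{prop:N}: since $\varphi$ has finitely many fixed points, there are only finitely many $\Z$-orbits of finite bars. Let $\ell_1,\dots,\ell_K$ denote the lengths of a set of orbit representatives, so that $\betatot(\bsigma;\F)=\sum_k\ell_k$. By Theorem~\ref{thm:betamax} each $\ell_k\leq 1$, hence $\ell_k\leq n$, and therefore no single finite bar can contain both $a+t$ and $a+t+n$ simultaneously. For each orbit, the set
\begin{equation*}
S_k^- := \{t\in[0,1]:\ a+t\in [c+j,c+j+\ell_k)\ \text{for some } j\in\Z\}
\end{equation*}
(where $[c,c+\ell_k)$ is any orbit representative) is the intersection with $[0,1]$ of a $1$-periodic subset of $\R$ of density $\ell_k$, and therefore has Lebesgue measure exactly $\ell_k$; the analogous set $S_k^+$ for $a+t+n$ also has measure $\ell_k$. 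Because of the length bound, the contribution of orbit $k$ to $N_t$ is the sum of indicators $\mathbf{1}_{S_k^-}(t)+\mathbf{1}_{S_k^+}(t)$, which integrates over $[0,1]$ to $2\ell_k$.

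Summing the two pieces gives
\begin{equation*}
\int_0^1\dim G_*^{(a+t,a+t+n)}(\bsigma;\F)\,\mathrm{d}t = n(d+1)+2\betatot(\bsigma;\F),
\end{equation*}
and rearrangement yields the formula. The only genuine point to check carefully is the density computation for each finite orbit, which crucially uses $\ell_k\leq 1$ from Theorem~\ref{thm:betamax} to guarantee that within each $t$-interval of length $1$ the relevant translate $c+j$ is unique and carves out a subinterval of $t$'s of length exactly $\ell_k$; everything else is bookkeeping with Lemma~\ref{lem:extremities} and Theorem~\ref{thm:spectral}.
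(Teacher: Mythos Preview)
Your proposal is correct and follows essentially the same approach as the paper's proof: both invoke Lemma~\ref{lem:extremities} to read $\dim G_*^{(a+t,a+t+n)}(\bsigma;\F)$ as a count of bars meeting exactly one of the two lines, split into infinite and finite orbits, use Theorem~\ref{thm:spectral} to get the constant contribution $n(d+1)$ from the infinite bars, and use the bound $\ell_k\leq 1$ from Theorem~\ref{thm:betamax} to see that each finite orbit contributes $2\ell_k$ to the integral. The only cosmetic difference is that the paper writes the bar count as $\sum_k|\chi_I(b+k)-\chi_I(a+k)|$ and integrates term by term, whereas you phrase the same computation as a density argument for a $1$-periodic set.
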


\begin{proof}
    Let $I_1,\ldots,I_n \subset \R$ be representatives of each
    $\Z$-orbit of finite bars of the persistence module
    $(G_*^{(-\infty,t)}(\bsigma;\F))_t$ and let $J_1,\ldots,J_{d+1}\subset\R$
    be representatives of each $\Z$-orbit of infinite bars of this persistence
    module.
    Given an interval $I\subset\R$, let $\chi_I : \R\to\{0,1\}$
    denote its characteristic map.
    According to Lemma~\ref{lem:extremities}, for $a<b$ which are neither infinite
    nor action values of $\bsigma$, one has
    \begin{multline*}
        \dim G_*^{(a,b)}(\bsigma;\F) = \\
        \sum_{k\in\Z} \left[
            \sum_{r=1}^n \big| \chi_{I_r}(b+k) - \chi_{I_r}(a+k) \big|
            + 
            \sum_{s=1}^{d+1} \big| \chi_{J_s}(b+k) - \chi_{J_s}(a+k) \big|
        \right].
    \end{multline*}
    Therefore, in order to prove the statement, it is enough to prove that for all
    $I\in\{ I_1,\ldots, I_n\}$,
    \begin{equation}\label{eq:intI}
        \sum_{k\in\Z} \int_0^1 \big|\chi_I(a+t+k+n)-\chi_I(a+t+k)\big| \ud t
        = 2\length I,
    \end{equation}
    while for all $J\in \{ J_1,\ldots, J_{d+1} \}$,
    \begin{equation}\label{eq:intJ}
        \sum_{k\in\Z} \int_0^1 \big|\chi_J(a+t+k+n)-\chi_J(a+t+k)\big| \ud t
        = n.
    \end{equation}

    In order to prove (\ref{eq:intI}), let us write $I=(u,v)$.
    According to Theorem~\ref{thm:betamax},
    $\length I = v-u \leq 1$.
    If the integer parts $\lfloor u - a\rfloor$ and $\lfloor v - a\rfloor$
    are equal, then
    only the terms $k=\lfloor u - a\rfloor$ and $k=\lfloor u - a\rfloor +n$
    are non-zero, both equal to $\length I$.
    Otherwise, one must compute the four non-zero terms and gets
    \begin{equation*}
        (1 - \{ u - a \}) + \{ v-a \} + (1- \{ u-a\}) + \{ v-a \}
        = 2(v-u) = 2\length I,
    \end{equation*}
    where $\{ x\}$ denotes the fractional part $x-\lfloor x\rfloor$ for $x\in\R$
    (at the first equality, we have used $\lfloor v-a \rfloor =
    \lfloor u-a\rfloor +1$).

    Identity (\ref{eq:intJ}) is proven by a similar straightforward computation.
\end{proof}

The following corollary is a generating functions analogue of
\cite[Theorem~D]{She19} in the special case $M=\CP^d$.

\begin{cor}
    \label{cor:betatotSmith}
    For all tuples of small $\C$-equivariant Hamiltonian
    diffeomorphisms of $\C^{d+1}\setminus 0$ with a finite
    number of associated fixed points in $\CP^d$,
    for all prime numbers $p$,
    \begin{equation*}
        \betatot(\bsigma^p;\F_p) \geq p \betatot(\bsigma;\F_p).
    \end{equation*}
\end{cor}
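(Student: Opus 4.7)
My plan is to combine Proposition~\ref{prop:betatotint}, which rewrites $\betatot$ as an integral of dimensions of GF-homology over a unit interval, with the pointwise Smith inequality of Proposition~\ref{prop:smith}. The corollary will reduce to a single inequality between integrals, which I will obtain by a change of variables together with the $1$-periodicity of $u\mapsto\dim G_*^{(u,u+1)}(\bsigma;\F_p)$ supplied by Proposition~\ref{prop:periodicity}.

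Concretely, I would apply Proposition~\ref{prop:betatotint} with $n=p$, $a=0$ to $\bsigma^p$ and with $n=1$, $a=0$ to $\bsigma$, which yields
\begin{equation*}
\betatot(\bsigma^p;\F_p) = \tfrac{1}{2}\left(\int_0^1 \dim G_*^{(t,t+p)}(\bsigma^p;\F_p)\,\ud t - p(d+1)\right),
\end{equation*}
\begin{equation*}
\betatot(\bsigma;\F_p) = \tfrac{1}{2}\left(\int_0^1 \dim G_*^{(t,t+1)}(\bsigma;\F_p)\,\ud t - (d+1)\right).
\end{equation*}
The inequality to prove therefore reduces to
\begin{equation*}
\int_0^1 \dim G_*^{(t,t+p)}(\bsigma^p;\F_p)\,\ud t \geq p\int_0^1 \dim G_*^{(t,t+1)}(\bsigma;\F_p)\,\ud t.
\end{equation*}
For almost every $t\in[0,1]$, I then apply Proposition~\ref{prop:smith} with $a=t/p$ and $b=t/p+1$ to bound the integrand on the left by $\sum_{q=-(p-1)/2}^{(p-1)/2}\dim G_*^{((t+q)/p,(t+q)/p+1)}(\bsigma;\F_p)$. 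Integrating term-by-term and performing the substitution $u=(t+q)/p$ in each summand turns the right-hand sum into
\begin{equation*}
p\int_{-(p-1)/(2p)}^{(p+1)/(2p)}\dim G_*^{(u,u+1)}(\bsigma;\F_p)\,\ud u,
\end{equation*}
and the $1$-periodicity of the integrand coming from Proposition~\ref{prop:periodicity} identifies this with $p\int_0^1\dim G_*^{(u,u+1)}(\bsigma;\F_p)\,\ud u$, as required.

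There is no real obstacle beyond the bookkeeping: the measure-zero exceptional sets appearing in Propositions~\ref{prop:betatotint} and~\ref{prop:smith} (action values of $\bsigma$ and of $\bsigma^p$) are harmless under integration, and the choice $n=p$ on the $\bsigma^p$-side is precisely the one that makes the $p$ intervals $[q/p,(q+1)/p]$ tile a full period. The case $p=2$ goes through identically, as long as one invokes the extended form of Proposition~\ref{prop:smith} proved in Section~\ref{se:case2}.
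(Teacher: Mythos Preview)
Your proof is correct and follows essentially the same approach as the paper. The only organisational difference is that the paper applies Proposition~\ref{prop:smith} with $a=t$, $b=t+1$ and then invokes Proposition~\ref{prop:betatotint} separately on the left (with $n=p$ at shifted base points $a=k$) and on each of the $p$ summands on the right (with $n=1$, $a=q/p$), whereas you apply Proposition~\ref{prop:betatotint} first and then Smith with $a=t/p$, $b=t/p+1$; the periodicity you cite from Proposition~\ref{prop:periodicity} is precisely what underlies the shift-independence of Proposition~\ref{prop:betatotint} in $a$, so the two arguments are equivalent.
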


\begin{proof}
    Let us take the integral over almost all $t\in[0,1]$ of the
    inequality stated in Proposition~\ref{eq:GSmith} for $a=t$ and $b=1+t$.
    On the left hand side,
    \begin{eqnarray*}
        \int_0^1 \dim G_*^{(pt,p+pt)}(\bsigma^p;\F_p) \ud t
        &=& \frac{1}{p} \int_0^p \dim G_*^{(s,p+s)}(\bsigma^p;\F_p) \ud s \\
        &=& \frac{1}{p} \sum_{k=0}^{p-1}
        \int_0^1 \dim G_*^{(k+s,k+s+p)}(\bsigma^p;\F_p) \ud s \\
        &=& \frac{1}{p}\sum_{k=0}^{p-1} \big[
        2\betatot(\bsigma^p;\F_p) + p(d+1) \big] \\
        &=& 2\betatot(\bsigma^p;\F_p) + p(d+1),
    \end{eqnarray*}
    where we have applied Proposition~\ref{prop:betatotint} at the
    third line.
    On the right hand side, by applying Proposition~\ref{prop:betatotint}
    once again,
    \begin{multline*}
        \sum_{(1-p)/2\leq q\leq (p-1)/2} 
        \int_0^1 \dim G_*^{(q/p,q/p+t)}(\bsigma;\F_p)
        \ud t \\
        = \sum_{(1-p)/2\leq q\leq (p-1)/2} \big[ 2\betatot(\bsigma;\F_p) + d+1 \big]
        = 2p\betatot(\bsigma;\F_p) + p(d+1).
    \end{multline*}
    Therefore,
    \begin{equation*}
        2\betatot(\bsigma^p;\F_p) + p(d+1) \geq
        2p\betatot(\bsigma;\F_p) + p(d+1)
    \end{equation*}
    and the conclusion follows.
\end{proof}

\begin{prop}\label{prop:universalcoef}
    For every tuple of small $\C$-equivariant Hamiltonian
    diffeomorphisms $\bsigma$ of $\C^{d+1}\setminus 0$ with a finite
    number of associated fixed points in $\CP^d$,
    there exists an integer $N\in\N$ such that
    for all prime numbers $p\geq N$,
    \begin{equation*}
        \betatot(\bsigma;\F_p) = \betatot(\bsigma;\Q).
    \end{equation*}
\end{prop}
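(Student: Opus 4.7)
The plan is to apply Proposition~\ref{prop:betatotint} to convert the equality of $\betatot$'s into an equality of dimensions of a finite collection of generating function homology groups, then deduce the latter from the universal coefficient theorem applied to the underlying finitely generated abelian groups.

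First, I would pick $n=1$ in Proposition~\ref{prop:betatotint} and choose $a\in\R$ generic, so that
\begin{equation*}
\betatot(\bsigma;\F) = \frac{1}{2}\left(\int_0^1 \dim G_*^{(a+t,a+t+1)}(\bsigma;\F)\,\ud t - (d+1)\right)
\end{equation*}
for every field $\F$. Under the standing assumption that $\bsigma$ has only finitely many fixed $\C$-lines, its action spectrum intersected with any bounded interval is finite, so the function $t\mapsto \dim G_*^{(a+t,a+t+1)}(\bsigma;\F)$ is a step function on $[0,1]$ taking only finitely many values on the complement of a finite set. Thus the integral reduces to a finite sum, and it suffices to prove that, for a fixed finite collection of pairs $(a_k,b_k)$ with $b_k-a_k=1$ not meeting the action spectrum, one has $\dim_{\F_p} G_*^{(a_k,b_k)}(\bsigma;\F_p) = \dim_\Q G_*^{(a_k,b_k)}(\bsigma;\Q)$ for all sufficiently large primes $p$.

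Next I would use that for each such pair, $G_*^{(a_k,b_k)}(\bsigma;\Z)$ is a finitely generated abelian group. Indeed, by construction $G_*^{(a_k,b_k)}(\bsigma,m)$ is the (shifted) relative singular homology of the pair of compact sublevel sets $(Z(\bsigma_{m,b_k}),Z(\bsigma_{m,a_k}))$ inside $\CP^{N-1}$, which has finitely generated integer homology; and in Section~\ref{se:direct} we showed that the transition morphisms $\theta_m^{m+1}$ are isomorphisms, so the direct limit $G_*^{(a_k,b_k)}(\bsigma;\Z)$ is itself finitely generated.

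Now write each of the finitely many groups as $\Z^{r_{k,n}}\oplus T_{k,n}$ with $T_{k,n}$ finite, and let $\mathcal{P}$ be the (finite) set of primes dividing the order of some $T_{k,n}$. The universal coefficient theorem gives
\begin{equation*}
\dim_{\F_p} G_n^{(a_k,b_k)}(\bsigma;\F_p) = r_{k,n} + \dim_{\F_p}(T_{k,n}\otimes\F_p) + \dim_{\F_p}\mathrm{Tor}(T_{k,n-1},\F_p),
\end{equation*}
while $\dim_\Q G_n^{(a_k,b_k)}(\bsigma;\Q) = r_{k,n}$. Choosing $N$ strictly greater than every prime in $\mathcal{P}$, both the tensor and $\mathrm{Tor}$ terms vanish whenever $p\geq N$, and the two dimensions coincide. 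Summing over $n$ and integrating over $t$ then yields $\betatot(\bsigma;\F_p)=\betatot(\bsigma;\Q)$, as required.

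There is no real obstacle here beyond confirming finite generation of $G_*^{(a,b)}(\bsigma;\Z)$, which follows formally from the definitions in Section~\ref{se:gfhomology}; the rest is a routine application of universal coefficients combined with Proposition~\ref{prop:betatotint}.
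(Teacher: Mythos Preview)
Your proposal is correct and follows essentially the same approach as the paper: reduce via Proposition~\ref{prop:betatotint} to comparing $\dim G_*^{(t,t+1)}(\bsigma;\F_p)$ and $\dim G_*^{(t,t+1)}(\bsigma;\Q)$ for finitely many values of $t$ (using that the action spectrum is finite in $[0,1]$), then invoke the universal coefficient theorem on the underlying finitely generated integral homology of a compact pair of sublevel sets in projective space. Your exposition is slightly more explicit about the torsion bookkeeping, but the argument is the same.
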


\begin{proof}
    According to Proposition~\ref{prop:betatotint},
    it is enough to prove that for some $N\in\N$
    every prime number such that $p\geq N$ satisfies
    \begin{equation}\label{eq:dimuniversal}
        \dim G_*^{(t,t+1)}(\bsigma;\F_p) =
        \dim G_*^{(t,t+1)}(\bsigma;\Q), \quad
        \forall t\in [0,1].
    \end{equation}
    If there is no action value of $\bsigma$ in $[a,b]$
    then $\dim G_*^{(a,a+1)}(\bsigma;\F) =
    \dim G_*^{(b,b+1)}(\bsigma;\F)$ for all field $\F$.
    Since there is a finite number of action values in $[0,1]$,
    it is enough to prove (\ref{eq:dimuniversal}) for a finite
    number of value $t$ (one in between each critical value
    of $[0,1]$).
    For each $t$ (that is not an action value),
    $G_*^{(t,t+1)}(\bsigma;\F) \simeq H_{*+i_0}(A_t,B_t;\F)$
    for a topological pair $(A_t,B_t)$ of some complex projective space
    independent of $\F$ with a finitely generated homology group with integral
    coefficients.
    According to the universal coefficient theorem, there exists $N_t\in\N$
    such that $\dim H_*(A_t,B_t;\Q) = \dim H_*(A_t,B_t;\F_p)$
    for all prime number $p\geq N_t$.
    The conclusion follows by taking the maximum among the $N_t$'s
    for our finite set of $t$'s.
\end{proof}

\subsection{The special case $p=2$}
\label{se:case2}

Here, we briefly explain how to modify the above arguments in the
special case $p=2$ -- that is only useful to prove Theorem~\ref{thm:main}
when $\F$ has characteristic 2.

In order to study the $\Z/(2)$-symmetry of a generating function associated
with $(e^{-2i\pi t}\Phi_1)^2$, one cannot take the generating function of $\bsigma^2_{m,t}$
since it is an even tuple.
Instead, we take the generating function of $(\bsigma_{m,t},\bepsilon,\bsigma_{m,t})$
which is invariant under the following action of $\Z/(2)$ written in $w$-variables:
\begin{equation*}
    (\mathbf{w}^1,w^2,\mathbf{w}^3) \mapsto
    \left(\mathbf{w}^3,-w^2+\sum_{k=1}^n(-1)^k (w^1_k + w^3_k),\mathbf{w}^1\right).
\end{equation*}
Indeed, $Q_{2n+1}$ is invariant under this action and, in $w$-variables,
\begin{equation*}
    F_{(\bsigma_{m,t},\bepsilon,\bsigma_{m,t})}(\mathbf{w^1},w^2,\mathbf{w}^3)
    = F'(\mathbf{w^1}) + F'(\mathbf{w^3}) + Q_{2n+1}(\mathbf{w}^1,w^2,\mathbf{w}^3),
\end{equation*}
where $F'$ is the direct sum of the elementary generating functions of $\bsigma_{m,t}$.
The set of fixed points of the induced action on $\CP^{(2n+1)(d+1)-1}$ is the disjoint
union of the complex projective spaces $P_0$ and $P_1$ defined by
\begin{gather*}
    P_0 := \left\{ \left[\mathbf{w}:\sum_{k=1}^n (-1)^k w_k : \mathbf{w}\right] \ |
    \ \mathbf{w} \in (\C^{d+1})^n \right\},\\
    P_1 := \left\{ \left[\mathbf{w}:w': -\mathbf{w}\right] \ |
    \ (\mathbf{w},w') \in (\C^{d+1})^n\times \C^{d+1} \right\}.
\end{gather*}
The restriction of the generating function to $P_0$ gives us back the generating
function $\widehat{F}_{\bsigma_{m,t}}$ whereas, still in $w$-variables,
\begin{equation*}
    \frac{1}{2}F_{(\bsigma_{m,t},\bepsilon,\bsigma_{m,t})}(\mathbf{w},w',-\mathbf{w})
    = F_{\bsigma_{m,t}}(\mathbf{w}) + 2\la \sum_{k=1}^n (-1)^{k+1} w_k, iw' \ra,
\end{equation*}
By the change of variables $A_n\mathbf{v} =\mathbf{w}$ and $\xi = 2w'$,
one gets, in $v$-variables, the function
\begin{equation*}
    (\mathbf{v},\xi) \mapsto
    F_{\bsigma_{m,t}}(\mathbf{v}) + \la v_1, i\xi \ra
\end{equation*}
which is the fiberwise sum of a generating function of $e^{-i\pi t}\Phi_1$
with the generating function $(x;\xi)\mapsto \la x , i\xi\ra$ that generates
$-\id$.
This time, we can take $(f_{s,t})$ to be the family of function
\begin{equation*}
    f_{s,t}(\mathbf{v},v_{n+1}) :=
    F_{\bsigma_{m,t+(1-s)/2}}(\mathbf{v}) + \sin\left(s\frac{\pi}{2}\right)
    \la v_1
    - i\cos\left(s\frac{\pi}{2}\right) \frac{\xi}{2}, i\xi\ra
    ,\ s\in[0,1],
\end{equation*}
that interpolates the latter with
$(\mathbf{v},\xi)\mapsto F_{\bsigma_{m,t+1/2}}(\mathbf{v})$.
That being said, it is not difficult to conclude.

\section{Proof of Theorem~\ref{thm:main}}
\label{se:proof}

\begin{proof}[Proof of Theorem~\ref{thm:main}]
    Let $\bsigma$ be any tuple of $\C$-equivariant Hamiltonian diffeomorphisms
    associated with $\varphi$, so that $N(\bsigma;\F) = N(\varphi;\F)$.
    Let us denote by $K(\bsigma;\F)$ the number of $\Z$-orbits of
    finite bars of the barcode associated with $\bsigma$ over the field $\F$.
    According to the universal coefficient theorem,
    one can assume that $\F=\Q$ if $\F$ has characteristic 0
    and $\F=\F_p$ if it has characteristic $p\neq 0$.

    Let us assume that $\F=\Q$.
    According to Proposition~\ref{prop:N},
    $N(\bsigma;\Q)>d+1$ implies that $K(\bsigma;\Q)>0$
    so $\betamax(\bsigma;\Q) > 0$.
    According to Corollary~\ref{cor:betatotSmith},
    for all prime number $p\geq 3$,
    \begin{equation*}
        K(\bsigma^p;\F_p)\betamax(\bsigma^p;\F_p) \geq
        \betatot(\bsigma^p;\F_p) \geq p\betatot(\bsigma;\F_p).
    \end{equation*}
    Thus, by Proposition~\ref{prop:universalcoef},
    for all sufficiently large prime $p$,
    \begin{equation*}
        K(\bsigma^p;\F_p)\betamax(\bsigma^p;\F_p) \geq
        p\betatot(\bsigma;\Q) \geq p\betamax(\bsigma;\Q),
    \end{equation*}
    that is to say that $K(\bsigma^p;\F_p)\betamax(\bsigma^p;\F_p)$
    grows at least linearly with prime numbers $p$.
    According to Theorem~\ref{thm:betamax},
    $\betamax(\bsigma^p;\F_p) \leq 1$ so
    $K(\bsigma^p;\F_p)$ must diverge to $+\infty$
    with prime numbers $p$ and so must $N(\bsigma^p;\F_p)$
    by Proposition~\ref{prop:N}.
    Let $z_1,\ldots, z_n\in\CP^d$ be the fixed points of $\varphi$.
    According to Corollary~\ref{cor:gromollmeyer}, there exists $B>0$
    such that $\dim \lochom_*(\bsigma^p;z_k;\F_p) < B$ for all $k$
    and all prime $p$.
    Let $A\in\N$ be such that for all prime $p\geq A$,
    $N(\bsigma^p;\F_p) > nB$.
    Then, for all prime $p\geq A$, there must be at least one
    fixed point of $\varphi^p$ that is not one of the $z_k$'s,
    that is there must be at least one $p$-periodic point
    that is not a fixed point.
    Hence, the conclusion for the case $\F$ of characteristic 0.

    Let us assume that $\F=\F_p$ for some prime number $p$.
    By contradiction, let us assume that $\varphi$ has only finitely
    many periodic point of period $p^k$ for some $k\in\N$.
    According to Corollary~\ref{cor:betatotSmith},
    \begin{equation*}
        \betatot(\bsigma^{p^k};\F_p) \geq p^k\betatot(\bsigma;\F_p),
        \quad \forall k\in\N,
    \end{equation*}
    in particular, $N(\bsigma^{p^k};\F_p)>d+1$ for all $k\in\N$.
    Thus, by taking a sufficiently large $p^k$-iterate of $\varphi$,
    one can assume that every periodic point of $\varphi$ of period $p^k$
    for some $k$ is an admissible fixed point of $\varphi$
    (see the end of Section~\ref{se:homologysublevel} for the definition
    of an admissible fixed point).
    According to Proposition~\ref{prop:gromollmeyer}, it implies
    that $N(\bsigma^{p^k};\F_p) = N(\bsigma;\F_p)$ for all $k\in\N$.
    But Corollary~\ref{cor:betatotSmith} together with
    Proposition~\ref{prop:N} imply that the left-hand side of this equation
    must diverge to $+\infty$ as $k$ grows,
    a contradiction.
\end{proof}

\appendix

\section{Projective join}\label{se:pj}

We describe an operation on the homology of subsets of a projective space
relating the homology of two subsets $A$ and $B$ to the homology
of their projective join $A*B$.
The analogous operation for the topological join was already defined by
Whitehead in \cite{Whi56}. Granja-Karshon-Pabiniak-Sandon already
defined a homology projective join in the real case
in \cite{GKPS} for a purpose similar to ours.
However, their direct construction seems difficult to extend in the complex
case.

Since the proof of some fundamental properties of this operation are only technical
and does not shed much light on their applications, we have put
these proofs in a specific section.
More precisely, proofs of Proposition~\ref{prop:pjuk} and \ref{prop:fpjtimesbeta}
are postponed to Section~\ref{se:pjproofs}

\subsection{Definition and properties}

Let $m,n\in\N$ and let $\pi:\C^{m+n+2}\setminus 0\to \CP^{m+n+1}$ be the
quotient map. We projectively endow $\CP^m$ and $\CP^n$ in $\CP^{m+n+1}$
by identifying $\CP^m$ with $\pi(\C^{m+1}\times 0\setminus 0)$ and $\CP^n$ with
$\pi(0\times \C^{n+1}\setminus 0)$ so that $\CP^m$ and $\CP^n$ do not
intersect.  This is equivalent to considering two projective subspaces of
respective $\C$-dimension $m$ and $n$ in general position.  Let $A\subset
\CP^m$ and $B\subset \CP^n$ be non-empty sets. Then the projective join
$A*B\subset \CP^{m+n+1}$ is the union of every projective line intersecting
$A$ and $B$.  In other words, $A*B = A\cup B\cup
\pi(\widetilde{A}\times\widetilde{B})$ where $\widetilde{A}$ and
$\widetilde{B}$ are the lifts of $A$ and $B$ to $\C^{n+1}\setminus 0$ and
$\C^{m+1}\setminus 0$ respectively.  One can remark that $\CP^m *\CP^n =
\CP^{m+n+1}$ and that if $[a:b]\in\CP^{m+n+1}$, with $a\in\C^{m+1}$ and
$b\in\C^{n+1}$, does not belong to $\CP^m$ and $\CP^n$, then only one
projective line intersecting these two subspaces contains $[a:b]$, namely the
line joining $\alpha:=[a:0]$ to $\beta:=[0:b]$ denoted by $(\alpha\beta)$.

We need to define a projective join in the level of homology which would be a map
$\pj_*:H_*(A\times B) \to H_{*+2}(A*B)$, or dually in the level of cohomology
$\pj^*:H^*(A*B)\to H^{*+2}(A\times B)$.
One can perhaps proceed directly by defining a projective join at the level of chains,
sending two chains $\alpha\in C_i(A)$ and $\beta\in C_j(B)$ to a chain
$\alpha *\beta \in C_{i+j+2}(A*B)$
that triangulates the projective join of their images.
We will proceed in an indirect way by remaining in the level of homology.

Let $E_{A,B}\subset \CP^n\times\CP^m\times\CP^{m+n+1}$ be the set
\begin{equation*}
    E_{A,B} := \left\{ (a,b,c) \in A\times B\times (A*B) \ |\ c\in
    (ab) \right\}.
\end{equation*}
Let $p_1$ and $p_2$ be the canonical projection of $\CP^n\times\CP^m\times\CP^{m+n+1}$
on the factor $\CP^n\times\CP^m$ and $\CP^{m+n+1}$ respectively.
Then $p_1|_{E_{A,B}}$ defines a $\CP^1$-fiber bundle on $A\times B$,
the fiber of any $(a,b)\in A\times B$ being $a\times b\times (ab)\simeq (ab)$.
As $\CP^1$ can be identified with the 2-sphere $\sphere{2}$,
the Gysin long exact sequence holds:
\begin{equation}\label{les:Gysin}
    \cdots \xrightarrow{\cdot\smile e} H^*(A\times B) \xrightarrow{(p_1)^*} H^*(E_{A,B})
    \xrightarrow{(p_1)_*} H^{*-2}(A\times B) \xrightarrow{\cdot\smile e} \cdots,
\end{equation}
where $e\in H^3(A\times B)$ denotes the Euler class of the $\sphere{2}$-bundle
$E_{A,B}$.

\begin{definition}
    The cohomology projective join $\pj^*:H^*(A*B)\to H^{*-2}(A\times B)$
    denotes the map $\pj^* := (p_1)_*\circ(p_2)^*$,
    where $(p_1)_*$ is defined by (\ref{les:Gysin}) and $(p_2)^*$
    is induced by $p_2:E_{A,B}\to A*B$.
    The homology projective join $\pj_* = (p_2)_*\circ (p_1)^* :
    H_*(A\times B) \to H_{*+2}(A*B)$ is
    defined dually.
\end{definition}

We extend this definition to topological couples $(A,B)\subset\CP^m$,
$(C,D)\subset\CP^n$ the following way.
Let $(A,B)*(C,D):=(A*C,A*D\cup B*C)$,
the map $p_1$ defines a relative $\CP^1$-fiber bundle $(E_{A,C},E_{A,D}\cup E_{B,C})$
on $(A,B)\times (C,D)$ while $p_2$ maps this bundle on $(A,B)*(C,D)$.
Hence, one can set $\pj^* := (p_1)_*\circ(p_2)^*$ and $\pj_*:=(p_2)_*\circ (p_1)^*$ as before.
By naturality of the maps induced by $p_1$ and $p_2$,
this extension is natural: projective join commutes with long exact sequences of
topological pairs or triples.

These maps are also natural in the following way: let $A,C\subset\CP^m$
and $B,D\subset\CP^n$ and assume that $f:A*B\to C*D$ is the restriction of
a projective map satisfying $f|_{A}:A\to C$ and $f|_{B}:B\to D$,
then the $\CP^1$-fiber bundle $E_{A,B}$
is the pull-back of $E_{C,D}$ by $f|_A\times f|_B$, so that
the following diagram commutes:
\begin{equation}\label{dia:natural}
    \begin{gathered}
    \xymatrix{
        H_*(A\times B) \ar[d]^-{(f|_A\times f|_B)_*} \ar[r]^-{\pj_*} & H_{*+2}(A*B)
        \ar[d]^-{f_*} \\
        H_*(C\times D) \ar[r]^-{\pj_*} & H_{*+2}(C*D)
    }
\end{gathered}.
\end{equation}
This statement extends to topological pairs in the obvious way.

\begin{prop}\label{prop:pjassociativity}
    The homology projective join is associative:
    given $A$, $B$ and $C$ included in $\CP^n$,
    \begin{equation*}
        \forall (\alpha,\beta,\gamma) \in
        H_*(A)\times H_*(B)\times H_*(C),\quad
        \pj_*(\pj_*(\alpha\times\beta)\times\gamma)
        = \pj_*(\alpha\times\pj_*(\beta\times\gamma)).
    \end{equation*}
\end{prop}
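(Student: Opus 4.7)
The plan is to introduce a symmetric triple-incidence variety and show that both iterated joins factor through its Gysin map via base change on suitable pullback squares. I assume throughout, as is implicit in the statement, that $A$, $B$, $C$ lie in pairwise disjoint projective subspaces of a common ambient $\CP^N$, so that $(A*B)*C = A*(B*C) = A*B*C$ canonically, and for every $(a, b, c)\in A\times B\times C$ the projective span $\langle a, b, c\rangle$ is a $\CP^2$.

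First I would define
\begin{equation*}
    E := \{(a, b, c, y) \in A\times B\times C\times (A*B*C) : y \in \langle a, b, c\rangle\},
\end{equation*}
with projections $r_1 : E \to A\times B\times C$ (a $\CP^2$-bundle) and $r_2 : E \to A*B*C$, and set $\pj^{(3)}_* := (r_2)_* \circ r_1^*$, where $r_1^*$ denotes the Gysin map shifting degree by $4$. The aim is to identify both iterated joins with $\pj^{(3)}_*$.

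For the left-hand side, I would form the pullback
\begin{equation*}
    \xymatrix{
        F_1 \ar[r]^-{\tilde{p}_2} \ar[d]_-{\tilde{q}_1} & E_{A*B, C} \ar[d]^-{p_1^{A*B,C}} \\
        E_{A,B} \times C \ar[r]^-{p_2^{A,B} \times \id} & (A*B) \times C
    }
\end{equation*}
so that $F_1 = \{(a,b,c,x,y) : x\in (ab),\, y\in (xc)\}$ is a $\CP^1\times\CP^1$-bundle over $A\times B\times C$ via the composite $s_1 := (p_1^{A,B}\times\id)\circ \tilde{q}_1$. The base-change formula for Gysin maps of fiber bundles gives $(p_1^{A*B,C})^* \circ (p_2^{A,B}\times\id)_* = (\tilde{p}_2)_*\circ \tilde{q}_1^*$, and combined with the product compatibility $(p_1^{A,B}\times\id)^* = (p_1^{A,B})^* \times \id$, this rewrites
\begin{equation*}
    \pj_*(\pj_*(\alpha\otimes\beta)\otimes\gamma) = (T_1)_*\circ s_1^*(\alpha\otimes\beta\otimes\gamma),
\end{equation*}
where $T_1 : F_1 \to A*B*C$, $(a,b,c,x,y)\mapsto y$. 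The forgetful map $\Phi_1 : F_1 \to E$, $(a,b,c,x,y)\mapsto (a,b,c,y)$, satisfies $r_1\circ\Phi_1 = s_1$ and $r_2\circ\Phi_1 = T_1$, and restricts fiberwise to the degree-one map $\CP^1\times\CP^1 \to \CP^2$, $(x, y)\mapsto y$ (bijective outside $\{y = c\}$, where it collapses a $\CP^1$ to a single point). Since the Gysin map $p^*$ of a fiber bundle sends a cycle $\sigma$ in the base to the preimage $p^{-1}(\sigma)$ with its induced orientation, fiberwise degree one forces $(\Phi_1)_*\circ s_1^* = r_1^*$, whence LHS $= \pj^{(3)}_*(\alpha\otimes\beta\otimes\gamma)$.

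An entirely symmetric argument with $F_2 = \{(a,b,c,z,y) : z\in (bc),\, y\in (az)\}$ and the forgetful map $\Phi_2 : F_2 \to E$, $(a,b,c,z,y) \mapsto (a,b,c,y)$, shows that the right-hand side also equals $\pj^{(3)}_*(\alpha\otimes\beta\otimes\gamma)$, which gives the proposition. The main obstacle will be justifying the identity $(\Phi_1)_*\circ s_1^* = r_1^*$, since $\Phi_1$ is only birational, not a bundle map, so the functoriality of Gysin maps does not apply directly. The cleanest argument is via Leray--Hirsch, presenting both $H^*(F_1)$ and $H^*(E)$ as free $H^*(A\times B\times C)$-modules generated by restrictions of the ambient hyperplane class, and using that $\Phi_1$ is fiberwise of degree one to conclude that the top fiber classes correspond under $(\Phi_1)_*$.
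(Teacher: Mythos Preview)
Your argument is essentially the paper's: your $E$ is the paper's $E_{A,B,C}$, your $F_1$ is the paper's $E_{(A,B),C}$, your $\Phi_1$ is the paper's $g$, and the base-change step is exactly the naturality square the paper uses. The one place the paper is more explicit is the identity $(\Phi_1)_*\circ s_1^* = r_1^*$: rather than invoking a degree-one/Leray--Hirsch argument, it proves a general composition property for integration along the fiber (via the Serre spectral sequence) reducing the question to checking that the fiberwise map $\Phi_1|_{F'}:F'\to \CP^2$ sends the top class to the top class, which it verifies by observing that $F'/((ab)\times c)\cong (abc)$; your Leray--Hirsch sketch would accomplish the same thing.
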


As $R$-algebras, one has 
\begin{equation*}
H^*(\CP^{m+n+1}) = R[u]/u^{m+n+2}
\quad \text{and} \quad
H^*(\CP^m\times\CP^n) = R[u_1,u_2]/(u_1^{m+1},u_2^{n+1})
\end{equation*}
where $u$, $u_1$ and $u_2$ restrict to orientation classes of $\CP^1$
(with $\CP^1\subset \CP^m$ for $u_1$ and $\CP^1\subset \CP^n$ for
$u_2$).

\begin{prop}\label{prop:pjuk}
    Let $\pj^*$ be the cohomology projective join on $\CP^m\times\CP^n$,
    with the above notation one has
    \begin{equation*}
        \pj^*u^k = \sum_{i+j=k-1} u_1^i u_2^j,\quad \forall k\in\N^*.
    \end{equation*}
    Dually, the homology projective join $\pj_*$ on $\CP^m\times\CP^n$
    satisfies
    \begin{equation*}
        \pj_*\left([\CP^i]\times[\CP^j]\right) = [\CP^{i+j+1}],\quad
        \forall i\in\{ 0,\ldots,m\}, \forall j\in\{ 0,\ldots,n\}.
    \end{equation*}
\end{prop}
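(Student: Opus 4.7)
The plan is to prove the cohomological statement first by identifying the $\CP^1$-bundle $p_1:E_{\CP^m,\CP^n}\to\CP^m\times\CP^n$ with a projective bundle, and then to deduce the homological one by a direct geometric argument. Let $p,q:\CP^m\times\CP^n\to\CP^m,\CP^n$ be the two projections and set $V:=p^*\mathcal O_{\CP^m}(-1)\oplus q^*\mathcal O_{\CP^n}(-1)$. For $(a,b)\in\CP^m\times\CP^n$, the fiber $V_{(a,b)}=L_a\oplus L_b$ is precisely the $2$-plane of $\C^{m+n+2}$ whose projectivization is the line $(ab)\subset\CP^{m+n+1}$. This yields a canonical identification $E_{\CP^m,\CP^n}\simeq\proj V$ over $\CP^m\times\CP^n$, under which $p_1$ becomes the bundle projection and $p_2:\proj V\to\CP^{m+n+1}$ is the classifying map of the tautological inclusion of $V$ into the trivial rank $m+n+2$ bundle.

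Under this identification, $\mathcal O_{\proj V}(-1)$ is the restriction of $\mathcal O_{\CP^{m+n+1}}(-1)$, so the tautological class $\xi:=c_1(\mathcal O_{\proj V}(1))$ equals $p_2^*u$. The Gysin push-forward $(p_1)_*$ of the statement is the standard integration along the $\CP^1$-fibers of $p_1$, and the projective bundle/Segre class formula gives $(p_1)_*\xi^k=s_{k-1}(V)$ for $k\geq 1$. Since $V$ splits as a sum of line bundles with first Chern classes $-u_1$ and $-u_2$,
\begin{equation*}
c(V)=(1-u_1)(1-u_2)\quad\text{hence}\quad s(V)=c(V)^{-1}=\sum_{i,j\geq 0}u_1^iu_2^j,
\end{equation*}
so that $s_{k-1}(V)=\sum_{i+j=k-1}u_1^iu_2^j$, giving the desired cohomology formula.

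For the homological assertion I would argue geometrically. The class $(p_1)^*([\CP^i]\times[\CP^j])\in H_{2(i+j+1)}(E_{\CP^m,\CP^n})$ is represented by the $\CP^1$-subbundle $p_1^{-1}(\CP^i\times\CP^j)=E_{\CP^i,\CP^j}$, a closed subspace of complex dimension $i+j+1$. Its image under $p_2$ is the projective join $\CP^i*\CP^j=\CP^{i+j+1}\subset\CP^{m+n+1}$, and the restriction of $p_2$ to this subbundle has degree one, since through any point of $\CP^{i+j+1}\setminus(\CP^i\cup\CP^j)$ passes a unique projective line meeting both $\CP^i$ and $\CP^j$. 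Hence $\pj_*([\CP^i]\times[\CP^j])=[\CP^{i+j+1}]$.

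The main obstacle is verifying the two identifications underlying the cohomology computation: $E_{\CP^m,\CP^n}\simeq\proj V$ over $\CP^m\times\CP^n$, and $p_2^*u=\xi$; both are essentially tautological once one writes down the description of $V$ as the bundle whose fibers are the $2$-planes cutting out the join lines inside $\C^{m+n+2}$. If one wishes to avoid the Chern/Segre formalism altogether, the same computation can be carried out by Leray--Hirsch: one obtains $H^*(E_{\CP^m,\CP^n})=H^*(\CP^m\times\CP^n)[\xi]/(\xi^2-(u_1+u_2)\xi+u_1u_2)$, together with $(p_1)_*1=0$ and $(p_1)_*\xi=1$, and the recursion $\xi^{k+1}=(u_1+u_2)\xi^k-u_1u_2\xi^{k-1}$ then yields the same closed-form expression for $(p_1)_*\xi^k$.
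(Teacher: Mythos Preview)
Your argument is correct and takes a genuinely different route from the paper's. The paper proceeds by hand: it first checks $\pj^*u=1$ by restricting to a single fiber, then removes the global section $p_2^{-1}(\CP^n)$ to obtain a complex line bundle, introduces its Thom class $t$, and computes directly that $u_0:=(p_2)^*u$ satisfies $u_0=t+(p_1)^*u_2$ and $t^2=t\smile(p_1)^*(u_1-u_2)$ (the coefficients $\lambda=1$ and $\mu=-1$ being pinned down by restricting to $E_{m,0}$ and by a symmetry argument in $E_{1,1}$). A binomial expansion then yields $(p_1)_*u_0^k$. Your approach packages all of this into the identification $E_{\CP^m,\CP^n}\simeq\proj V$ with $V=p^*\mathcal O(-1)\oplus q^*\mathcal O(-1)$, the equality $p_2^*u=\xi$, and the standard Segre formula $(p_1)_*\xi^k=s_{k-1}(V)$; the Leray--Hirsch variant you sketch is essentially the same computation as the paper's once one observes that the relation $\xi^2-(u_1+u_2)\xi+u_1u_2=0$ encodes exactly $t^2=t\smile(p_1)^*(u_1-u_2)$ after the change of variable $\xi=t+(p_1)^*u_2$. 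What your argument buys is brevity and conceptual clarity; what the paper's argument buys is that it is entirely self-contained and requires no Chern/Segre formalism, which is closer to the elementary spirit of the appendix. For the homology statement the paper simply says ``dually'', whereas your degree-one geometric argument via $p_1^{-1}(\CP^i\times\CP^j)\to\CP^{i+j+1}$ is a pleasant direct proof.
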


We recall that the cohomological length $\ell(A)$ of a subspace $A\subset\CP^N$
is the rank of the morphism $H^*(\CP^N;\Z)\to H^*(A;\Z)$ induced by the inclusion
(\emph{e.g.} $\ell(\CP^n) = n+1$). This is also the rank of the morphism
$H_*(A;\Z)\to H_*(\CP^N;\Z)$.

Given two subsets $A\neq\emptyset$ and $B$ as above, let $\ell := \ell(B)$.
The restriction of $u^\ell \in H^*(\CP^{m+n+1})$ to
$H^*(B)$ is zero and is non-zero in $H^*(A*B)$.
Let $v_B\in H^*(A*B,B)$ be one of its inverse image.
We recall that there is a well-defined cap-product
\begin{equation*}
    H_k(A*B,B)\times H^l(A*B,B) \xrightarrow{\frown} H_{k-l}(A*B\setminus B)
\end{equation*}
which is defined by the following commutative diagram:
\begin{equation*}
    \begin{gathered}
            \xymatrix{
                H_*(A*B,B)\times H^*(A*B,B)
                \ar[]!<9ex,2ex>;[rd]!<0ex,-0.6ex>^-{\frown} 
                \ar@<-8ex>[d]^-{\simeq} \\
                H_*(A*B,T)\times H^*(A*B,T)
                \ar@<-8ex>[u]^-{\simeq}
                \ar@<8ex>[d]^-{\simeq} 
                & H_*(A*B\setminus B) \\
                H_*(A*B\setminus B,T\setminus B)\times
                H^*(A*B\setminus B,T\setminus B)
                \ar@<8ex>[u]^-{\simeq} 
                \ar[]!<15ex,2ex>;[ru]^-{\frown} 
        }
    \end{gathered},
\end{equation*}
where $T\subset A*B$ is the restriction of a tubular neighborhood of $\CP^n$
to $A*B$,
the bottom diagonal arrow is the usual cap-product
and vertical maps are isomorphisms induced by inclusion maps
(the isomorphisms come from retractions at the top and from excision
at the bottom).
Let $p_A : A*B\setminus B\to A$ be the map
$p_A[a:b] := [a:0]$.
Let $f:H_*(A*B)\to H_{*-2\ell}(A)$ be the map $f(\alpha):=(p_A)_*(\alpha\frown v_B)$.
These definitions extend to the case where $A$ is a topological
pair $(A_1,A_0)$ with $A_1\neq A_0$
by taking $v_B\in H^{\ell(B)}(A_1*B,B)$ and by using the cap-product
\begin{equation*}\label{eq:defcap}
    H_k(A_1*B,A_0*B)\times H^l(A_1*B,B) \xrightarrow{\frown}
    H_{k-l}(A_1*B\setminus B,A_0*B\setminus B),
\end{equation*}
defined the same way as above.

\begin{cor}[{\cite[Corollary~A.2]{Giv90}}] \label{cor:homlength}
    For all non-empty subsets $A\subset \CP^m$ and $B\subset\CP^n$,
    one has
    \begin{equation*}
        \ell (A*B) = \ell (A) + \ell (B).
    \end{equation*}
\end{cor}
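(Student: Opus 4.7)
The plan is to establish $\ell(A*B)\geq\ell(A)+\ell(B)$ and $\ell(A*B)\leq\ell(A)+\ell(B)$ separately. Set $a:=\ell(A)$ and $b:=\ell(B)$, and identify $\CP^m,\CP^n$ with the disjoint projective subspaces of $\CP^{m+n+1}=\CP^m*\CP^n$ containing $A$ and $B$.

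For the lower bound I would exploit the homology projective join and its naturality. By definition of $\ell$, one can choose $\alpha\in H_{2(a-1)}(A)$ and $\beta\in H_{2(b-1)}(B)$ whose images in $H_*(\CP^m)$ and $H_*(\CP^n)$ are the top non-vanishing classes $[\CP^{a-1}]$ and $[\CP^{b-1}]$ respectively. The naturality square (\ref{dia:natural}) applied to the inclusions $A\hookrightarrow\CP^m$, $B\hookrightarrow\CP^n$ (which extend to $A*B\hookrightarrow\CP^{m+n+1}$), together with the explicit computation of Proposition~\ref{prop:pjuk}, shows that $\pj_*(\alpha\times\beta)\in H_{2(a+b-1)}(A*B)$ maps to $\pj_*([\CP^{a-1}]\times[\CP^{b-1}])=[\CP^{a+b-1}]$ in $H_*(\CP^{m+n+1})$. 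Since the image of $i_*\colon H_*(A*B)\to H_*(\CP^{m+n+1})$ is stable under the cap-product with the generator $u\in H^2(\CP^{m+n+1})$ by the projection formula, it contains every $[\CP^k]=[\CP^{a+b-1}]\frown u^{a+b-1-k}$ for $0\le k\le a+b-1$, so it has rank at least $a+b$.

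For the upper bound I would turn the disjointness $A\cap B=\emptyset$ in $\CP^{m+n+1}$ into the vanishing of a relative cohomology group. Because $u^a$ restricts to $0$ in $H^*(A)$ and the projection $[a:b]\mapsto[a:0]$ deformation retracts $A*B\setminus B$ onto $A$, the class $u^a|_{A*B}$ restricts to $0$ in $H^*(A*B\setminus B)$; the long exact sequence of the pair $(A*B,\,A*B\setminus B)$ then yields a lift $\tilde u_A\in H^{2a}(A*B,\,A*B\setminus B)$, and symmetrically a lift $\tilde u_B\in H^{2b}(A*B,\,A*B\setminus A)$. Their relative cup-product lies in
\begin{equation*}
H^{2(a+b)}\bigl(A*B,\,(A*B\setminus A)\cup(A*B\setminus B)\bigr)=H^{2(a+b)}(A*B,\,A*B)=0,
\end{equation*}
since $(A*B\setminus A)\cup(A*B\setminus B)=A*B\setminus(A\cap B)=A*B$. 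Pushing the identity $\tilde u_A\smile\tilde u_B=0$ forward to $H^{2(a+b)}(A*B)$ yields $u^{a+b}|_{A*B}=0$, so the image of $H^*(\CP^{m+n+1})\to H^*(A*B)$ is a quotient of $R[u]/(u^{a+b})$ and has rank at most $a+b$.

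The main obstacle is really just organizational: the lower bound is a direct application of the naturality of $\pj_*$ and the formula of Proposition~\ref{prop:pjuk}, while the upper bound hinges on the almost trivial but decisive observation that $A\cap B=\emptyset$ forces the ambient relative group containing $\tilde u_A\smile\tilde u_B$ to vanish. The only mild technicality is that the relative cup-product has the advertised target, which is standard once one notes that the complements $A*B\setminus A$ and $A*B\setminus B$ form an open cover of $A*B$ (under the implicit assumption that $A$ and $B$ are closed in $\CP^{m+n+1}$, as in all the applications in the paper).
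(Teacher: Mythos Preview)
Your lower bound is identical to the paper's. For the upper bound you take a genuinely different route: a Lusternik--Schnirelmann style argument, lifting $u^a|_{A*B}$ and $u^b|_{A*B}$ to relative classes whose cup product lands in $H^*(A*B,A*B)=0$. The paper instead uses the commutative square
\[
\xymatrix{
H_*(A*B) \ar[r]^-{f} \ar[d] & H_{*-2\ell(B)}(A) \ar[d] \\
H_*(\CP^{m+n+1}) \ar[r]^-{\cdot\frown u^{\ell(B)}} & H_{*-2\ell(B)}(\CP^{m+n+1})
}
\]
with $f(\alpha)=(p_A)_*(\alpha\frown v_B)$, which forces any class in the image of $H_*(A*B)\to H_*(\CP^{m+n+1})$ of degree $\geq 2\ell(B)$ to come, after capping with $u^{\ell(B)}$, from $H_*(A)$. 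Your argument is slicker for this corollary in isolation; the paper's has the advantage that the map $f$ is precisely the one needed later in Proposition~\ref{prop:fpjtimesbeta} and Corollary~\ref{cor:pjstabilization}, so it is introduced here at no extra cost.

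One small remark on your closing caveat: you do not need to assume $A$ and $B$ closed in $\CP^{m+n+1}$. Since $\CP^m$ and $\CP^n$ are closed in $\CP^{m+n+1}$ and $A = (A*B)\cap\CP^m$, $B = (A*B)\cap\CP^n$, the sets $A$ and $B$ are automatically closed in $A*B$ for arbitrary non-empty subsets. Hence $A*B\setminus A$ and $A*B\setminus B$ are always open in $A*B$, the pair is excisive, and the relative cup product lands in $H^*(A*B,A*B)=0$ with no further hypothesis.
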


\begin{proof}
    Let $\alpha\in H_{2\ell(A)-2}(A)$ and  $\beta\in H_{2\ell(B)-2}(B)$
    be classes that are sent to the class $[\CP^{\ell(A)-1}]\in H_*(\CP^m)$ and
    $[\CP^{\ell(B)-1}]\in H_*(\CP^n)$ respectively.
    According to Proposition~\ref{prop:pjuk} and naturality (\ref{dia:natural}),
    $\pj_*(\alpha\times\beta)$ is sent to $[\CP^{\ell(A)+\ell(B)-1}]$
    in $H_*(\CP^{m+n+1})$. Hence $\ell(A*B)\geq \ell(A)+\ell(B)$.
    The converse inequality comes from the commutativity of the following diagram:
    \begin{equation*}
        \begin{gathered}
            \xymatrixcolsep{4pc}
            \xymatrix{
                H_*(A*B) \ar[r]^-{f} \ar[d] & H_{*-2\ell(B)}(A) \ar[d] \\
                H_*(\CP^{m+n+1}) \ar[r]^-{\cdot\frown u^{\ell(B)}} &
                H_{*-2\ell(B)}(\CP^{m+n+1})
            }
        \end{gathered}
        .
    \end{equation*}
\end{proof}

\begin{prop}\label{prop:fpjtimesbeta}
    Let $A\subset\CP^m$, $B\subset\CP^n$ be non-empty sets and $\ell := \ell(B)$.
    Let $\beta\in H_*(B)$ be a class that is sent to
    $[\CP^{\ell-1}]\in H_*(\CP^n)$.
    The following diagram commutes:
    \begin{equation*}
        \begin{gathered}
            \xymatrix{
                H_{*+2\ell-2}(A\times B) \ar[r]^-{\pj_*} &
                H_{*+2\ell}(A*B) \ar[d]^-{f} \\
                H_*(A) \ar[u]^-{\cdot\times\beta} \ar[r]^-{\id}_-{=} &
                H_*(A)
            }
        \end{gathered},
    \end{equation*}
    where $f(\alpha) := (p_A)_*(\alpha\frown v_B)$ as defined above.
    This result also holds when $A$ is a topological pair $(A_1,A_0)$
    with $A_1\neq A_0$.
\end{prop}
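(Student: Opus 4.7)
My plan is first to reduce to a universal case by naturality. Let $j_B : B \hookrightarrow \CP^n$ denote the inclusion; choosing a lift $v_{\CP^n}\in H^{2\ell}(A*\CP^n,\CP^n)$ of $u^\ell$ and setting $v_B := (\id*j_B)^* v_{\CP^n}$, the projection formula for the cap product together with the identity $p_A^{A*\CP^n}\circ(\id*j_B)=p_A^{A*B}$ on $A*B\setminus B$ yields $f^{A*\CP^n}\circ(\id*j_B)_* = f^{A*B}$. Combined with the naturality diagram (\ref{dia:natural}) for $\pj_*$, this replaces $(B,\beta)$ by $(\CP^n,[\CP^{\ell-1}])$. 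The same argument applied in reverse to the inclusion $i:\CP^{\ell-1}\hookrightarrow\CP^n$ (valid since $[\CP^{\ell-1}] = i_*[\CP^{\ell-1}]_\mathrm{fund}$) further reduces to $B=\CP^{\ell-1}$ and $\beta=[\CP^{\ell-1}]_\mathrm{fund}$, so that $\ell(B)=\ell$.

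\textbf{Thom-class interpretation.} In this universal setting, $p_A:A*\CP^{\ell-1}\setminus\CP^{\ell-1}\to A$ is the total space of the rank-$\ell$ complex vector bundle obtained by restricting $\mathcal{O}_{\CP^m}(1)^{\oplus\ell}$ to $A$, and $v_B$ coincides (up to sign) with the Thom class of this bundle, since it is the unique relative class of degree $2\ell$ lifting $u^\ell$, which is Poincar\'e dual to a $\CP^m\subset\CP^{m+\ell}$ playing the role of zero section disjoint from $\CP^{\ell-1}$. Hence $f$ is the inverse of the Thom homomorphism $\tau:H_*(A)\to H_{*+2\ell}(A*\CP^{\ell-1},\CP^{\ell-1})$, and it suffices to verify the identity $\pj_*(\alpha\times[\CP^{\ell-1}]_\mathrm{fund}) = \tau(\alpha)$, or equivalently to carry out the composition $f\circ\pj_*(\cdot\times[\CP^{\ell-1}]_\mathrm{fund})$ directly.

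\textbf{Cap-product computation and the relative case.} I would verify the final identity via the projection formula
\[
\pj_*(\alpha\times\beta)\frown v_B = (p_2)_*\bigl((p_1)^*(\alpha\times\beta)\frown p_2^*v_B\bigr),
\]
observing that the cap product lands in $H_*(E_{A,B}\setminus p_2^{-1}(B))$, a $\C$-bundle over $A\times B$ that deformation retracts to $A\times B$, and that under this retraction $p_A\circ p_2$ coincides with the coordinate projection $\pi_A$. A Leray--Hirsch analysis of the $\CP^1$-bundle $E_{A,B}=\mathbb{P}(\mathcal{O}_{\CP^m}(-1)\boxplus\mathcal{O}_{\CP^n}(-1))|_{A\times B}$ with tautological class $\xi=p_2^*u$, together with the Grothendieck relation used to reduce $\xi^\ell$, identifies the resulting class in $H_*(A\times B)$ with $\alpha\times\eta$ for a point class $\eta\in H_0(\CP^{\ell-1})$ of augmentation~$1$; applying $(\pi_A)_*$ then yields $\alpha$. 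The main obstacle is the bookkeeping in this Leray--Hirsch computation, since $p_2^*v_B$ does not split cleanly into base and fiber components and one must use the Grothendieck relation to reduce powers of $\xi$. The relative case with $A$ replaced by a pair $(A_1, A_0)$, $A_1\neq A_0$, follows verbatim: every identity used (projective join, projection formula, naturality, Thom class, Leray--Hirsch) is compatible with topological pairs, so the reduction and computation apply with $H_*(A)$ replaced throughout by $H_*(A_1, A_0)$.
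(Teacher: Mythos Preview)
Your naturality reduction in the first paragraph has a genuine gap. You write ``choosing a lift $v_{\CP^n}\in H^{2\ell}(A*\CP^n,\CP^n)$ of $u^\ell$'', but such a lift need not exist: by the long exact sequence of the pair, a lift exists only if the restriction of $u^\ell$ to $H^{2\ell}(\CP^n)$ vanishes, and when $\ell=\ell(B)\leq n$ this restriction is the nonzero generator. Thus the map $f^{A*\CP^n}$ with a degree-$2\ell$ class simply is not defined, and you cannot pass through $\CP^n$. Nor can you skip directly to $\CP^{\ell-1}$, since there is no natural map between $B$ and $\CP^{\ell-1}$ in either direction; the only available inclusion is $j_B:B\hookrightarrow\CP^n$, and you have just seen that the target of that reduction does not carry the needed relative class. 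The entire Thom-class interpretation of the second paragraph rests on this reduction and therefore does not stand on its own.

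The good news is that your third paragraph is essentially the paper's proof, and it works \emph{without any reduction}, directly for arbitrary $B$. The paper sets $v'_B:=(p_2)^*v_B$, uses the Thom isomorphism of the $\C$-bundle $E_{A,B}\setminus p_2^{-1}(B)\to A\times B$ (restriction of the global one over $\CP^m\times\CP^n$) to write $v'_B=t'\smile(p_1)^*w$, and then identifies $w=(p_1)_*v'_B=\sum_{i+j=\ell-1}u_1^iu_2^j$ from the earlier computation of $(p_1)_*u_0^k$ (your ``Grothendieck relation'' bookkeeping). The commutative square relating the cap products in $E_{A,B}$ and $A*B$ via $p_2$, together with $p_A\circ p_2=\mathrm{pr}_1\circ p_1$ and the sphere-bundle projection formula $(p_1)_*((p_1)^*\gamma\frown v'_B)=\gamma\frown w$, then yields
\[
(p_A)_*\bigl(\pj_*(\alpha\times\beta)\frown v_B\bigr)
=(\mathrm{pr}_1)_*\Bigl((\alpha\times\beta)\frown\sum_{i+j=\ell-1}u_1^iu_2^j\Bigr)
=\langle u^{\ell-1},\beta\rangle\,\alpha=\alpha,
\]
the last step by grading (only $(i,j)=(0,\ell-1)$ survives) and the hypothesis on $\beta$. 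So drop the reduction and run your cap-product computation directly; that is the paper's argument.
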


\begin{cor}\label{cor:pjstabilization}
    Let $(A_1,A_0)$ be a topological pair included in $\CP^m$,
    the map $\alpha\mapsto \pj_*(\alpha\times [\CP^n])$ gives
    an isomorphism
    \begin{equation*}
        H_*(A_1,A_0) \to H_{*+2(n+1)}(A_1*\CP^n,A_0*\CP^n)
    \end{equation*}
    which is the inverse of the isomorphism $f:\alpha\mapsto
    (p_{A_1})_*\alpha\frown v_{\CP^n}$.
\end{cor}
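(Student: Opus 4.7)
The plan combines two ingredients: Proposition~\ref{prop:fpjtimesbeta} yields a one-sided inverse for $g(\alpha) := \pj_*(\alpha \times [\CP^n])$, and a Thom-isomorphism argument upgrades this to a two-sided inverse.

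First, I apply Proposition~\ref{prop:fpjtimesbeta} with $B = \CP^n$ and $\beta := [\CP^n]$. Setting $\ell := \ell(\CP^n) = n+1$, the class $[\CP^n] \in H_{2n}(\CP^n)$ is sent to itself under the identity inclusion $\CP^n \hookrightarrow \CP^n$, which is $[\CP^{\ell-1}]$, so the hypothesis of the proposition is satisfied and the commutative diagram specializes to $f \circ g = \id_{H_*(A_1, A_0)}$. In particular $g$ is split injective and $f$ is split surjective.

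Second, I identify $f$ with a Thom isomorphism, from which its bijectivity follows. The projection $q\colon (A_1*\CP^n)\setminus\CP^n \to A_1$, $[a:b]\mapsto [a:0]$, realizes $(A_1*\CP^n)\setminus\CP^n$ as the total space of the complex rank-$(n+1)$ bundle $E := \mathcal{O}(-1)^{\oplus(n+1)}|_{A_1}$, and analogously over $A_0$. Excising a tubular neighborhood of $\CP^n$ and deformation retracting the complement onto the disk bundle, I obtain
\[
H_*(A_1*\CP^n,\,A_0*\CP^n) \;\cong\; H_*\bigl(D(E|_{A_1}),\,D(E|_{A_0}) \cup S(E|_{A_1})\bigr),
\]
and the relative Thom isomorphism then provides an isomorphism with $H_{*-2(n+1)}(A_1,A_0)$, implemented by capping with the Thom class $\tau$ and pushing down via $q$. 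I argue that $v_{\CP^n}$ \emph{is} (up to orientation) this Thom class: both classes are lifts of $u^{n+1}$, and restricting to a single fiber $\{x\}*\CP^n = \CP^{n+1} \subset \CP^{m+n+1}$ one checks that each yields the top generator of $H^{2(n+1)}(\CP^{n+1},\CP^n)$, since $u^{n+1}$ restricts to the top generator of $H^{2(n+1)}(\CP^{n+1})$ and vanishes on $\CP^n$.

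Consequently $f$ agrees with the Thom isomorphism and is in particular invertible; combined with the already established identity $f \circ g = \id$, this forces $g = f^{-1}$, proving both maps are mutually inverse isomorphisms. The main technical hurdle will be the excision/deformation retraction setup around $\CP^n$ and the fiberwise identification of $v_{\CP^n}$ with $\tau$, both essentially standard but requiring care with the mixing of projective and affine coordinates.
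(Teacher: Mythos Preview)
Your proposal is correct and follows essentially the same strategy as the paper: establish that $f$ is an isomorphism via a Thom-type argument, then invoke Proposition~\ref{prop:fpjtimesbeta} to identify $g$ as its inverse. The paper outsources the Thom isomorphism to \cite[Proposition~4.1]{periodicCPd} in the absolute case (showing $f_i:H_*(A_i*\CP^n)\to H_{*-2(n+1)}(A_i)$ is an isomorphism in degrees $\geq 2(n+1)$ and $H_*(\CP^n)\to H_*(A_i*\CP^n)$ is an isomorphism in lower degrees) and then passes to the relative statement by the five lemma on the long exact sequences of the pairs; you instead aim for the relative Thom isomorphism directly. Both routes are equivalent in content.

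One point deserves care: your phrase ``excising a tubular neighborhood of $\CP^n$'' is not literally correct, since a tubular neighborhood of $\CP^n$ inside $A_1*\CP^n$ is \emph{not} contained in $A_0*\CP^n$ (it contains points $[a:b]$ with $[a]\in A_1\setminus A_0$), so the excision hypothesis fails for the pair $(A_1*\CP^n,A_0*\CP^n)$ as written. The clean way to obtain your displayed isomorphism is to use the quotient map $\pi:D(E|_{A_1})\to A_1*\CP^n$ that collapses each sphere fiber $S^{2n+1}$ to its image in $\CP^n$; then $\pi^{-1}(A_0*\CP^n)=D(E|_{A_0})\cup S(E|_{A_1})$, and $\pi$ is a relative homeomorphism on each of the pairs $(D(E|_{A_i}),S(E|_{A_i}))\to(A_i*\CP^n,\CP^n)$, so the five lemma applied to the long exact sequences of the triples gives what you want. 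This is presumably what you had in mind under ``requiring care,'' but the description should be adjusted.
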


\begin{proof}
    According to \cite[Proposition~4.1]{periodicCPd}, the inclusion morphism
    $H_*(\CP^n)\to H_*(A_i*\CP^n)$ is an isomorphism in degree $*\leq 2n+1$
    and $f_i:H_*(A_i *\CP^n)\to H_{*-2(n+1)}(A_i)$,
    $f_i(\alpha):=(p_{A_i})_*\alpha\frown v_{\CP^n}$ is an isomorphism in degree
    $*\geq 2(n+1)$, for $i\in\{0,1\}$ (it is stated in cohomology but the proof
    also holds in this dual setting).
    Using the long exact sequences of the pairs $(A_1,A_0)$ and
    $(A_1*\CP^n,A_0*\CP^n)$, we deduce that
    $f:H_*(A_1*\CP^n,A_0*\CP^n)\to H_{*-2(n+1)}(A_1,A_0)$ is an isomorphism.
    The result is now a direct consequence of Proposition~\ref{prop:fpjtimesbeta}.
\end{proof}

\subsection{Technical proofs}\label{se:pjproofs}

We will denote $E_{\CP^m,\CP^n}$ by $E_{m,n}$.
The bundle $E_{A,B}$ is the restriction of the bundle $E_{m,n}$ to $A\times B$,
hence $e$ is the pullback of the Euler class of $E_{m,n}$ which lies
in $H^3(\CP^m\times\CP^n) = 0$.
Therefore $e=0$ and (\ref{les:Gysin}) reduces to the short exact sequence
\begin{equation}\label{ses:Gysin}
    0\to H^*(A\times B) \xrightarrow{(p_1)^*} H^*(E_{A,B})
    \xrightarrow{(p_1)_*} H^{*-2}(A\times B) \to 0.
\end{equation}

\begin{proof}[Proof of Proposition~\ref{prop:pjassociativity}]
    Let us first define a projective join with 3 entries
    $\pj_*^3 : H_*(A\times B\times C) \to H_{*+4}(A*B*C)$
    then prove that
    \begin{equation}\label{eq:pj3}
        \pj_*(\pj_*(\alpha\times\beta)\times\gamma)
        = \pj_*^3(\alpha\times\beta\times\gamma)
        = \pj_*(\alpha\times\pj_*(\beta\times\gamma)).
    \end{equation}
    Given three points $a$, $b$ and $c$ of some projective space $\CP^N$
    that are projectively independent,
    we use the classical notation $(abc)\subset \CP^N$ to denote
    the complex projective plane 
    Let $E_{A,B,C} \subset (\CP^n)^3\times \CP^{3n+2}$ be the set
    \begin{equation*}
        E_{A,B,C} := \left\{ (a,b,c,z) \in A\times B\times C \times (A*B*C)
        \ |\ z\in (abc) \right\}
    \end{equation*}
    and let
    $P_1 : E_{A,B,C} \to A\times B\times C$ and
    $P_2 : E_{A,B,C} \to A*B*C$ be the associated projection maps.
    The map $P_1$ defines a $\CP^2$-fiber bundle
    which is the restriction of the fiber bundle
    $E_{\CP^n,\CP^n,\CP^n} \to (\CP^n)^3$,
    so the action of $\pi_1(A\times B\times C)$ on the homology
    group $H_*(\CP^2)$ of a fiber is the restriction of
    the action of $\pi_1((\CP^n)^3)=0$ \emph{i.e.} trivial.
    We define $\pj_*^3$ by $\pj_*^3 := (P_2)_*\circ (P_1)^*$
    where $(P_1)^* : H_*(A\times B\times C) \to H_{*+4}(E_{A,B,C})$
    denotes the morphism dual to the integration
    along the fiber of the fibration $P_1$
    (the complex structure of $\CP^2$ gives a natural
    identification $H_4(\CP^2)\simeq R$).
    We refer to Section~\ref{se:fiberintegration} for
    the definition and properties of this morphism.

    In order to prove (\ref{eq:pj3}), let us introduce the set
    \begin{equation*}
        E_{(A,B),C} := \{ (a,b,c,x,z)\in A\times B\times C\times (A*B)\times
        (A*B*C)\ |\ x\in (ab) \text{ and } z\in (xc) \}
    \end{equation*}
    with the projection maps $P'_2 : E_{(A,B),C} \to A*B*C$,
    $P'_1 : E_{(A,B),C} \to E_{A,B}\times C$ sending $(a,b,c,x,z)$
    to $(a,b,x;c)$, $\tilde{f} : E_{(A,B),C} \to E_{A*B,C}$
    sending $(a,b,c,x,z)$ to $(x,c,z)$ and
    $g : E_{(A,B),C} \to E_{A,B,C}$ sending
    $(a,b,c,x,z)$ to $(a,b,c,z)$.
    The map $P'_1$ is a $\CP^1$-fiber bundle, in fact
    $\tilde{f}$ is a morphism of fiber bundle
    with base-space morphism $f:= p_2\times \id_C$.
    In order to summarize the situation, we have the following commutative
    diagram:
    \begin{equation}\label{dia:pj3}
        \begin{gathered}
            \xymatrix{
            &A*B*C \\
            E_{A,B,C} \ar[ddr]_-{P_1} \ar[ur]^-{P_2}
            &E_{(A,B),C} \ar[r]^-{\tilde{f}} \ar[d]^-{P'_1} \ar[u]^-{P'_2}
            \ar[l]^-{g}
            & E_{A*B,C} \ar[d]^-{p'_1} \ar[ul]_-{p'_2} \\
            &E_{A,B} \times C \ar[r]^-{f} \ar[d]^-{p_1\times\id_C}
            & (A*B) \times C \\
            &A\times B\times C
            }
        \end{gathered}.
    \end{equation}
    According to the naturality of the integration along the fiber
    (\ref{dia:naturalfiberintegration}), it follows
    that $(p'_2)_*(p'_1)^*f_*(p_1\times\id_C)^* = (P'_2)_*(P'_1)^*(p_1\times\id_C)^*$,
    which means that
    \begin{equation}\label{eq:pjpj}
        \pj_*(\pj_*(\alpha\times\beta)\times\gamma)
        = (P'_2)_*(P'_1)^*(p_1\times\id_C)^*(\alpha\times\beta\times\gamma).
    \end{equation}
    The map $g$ commutes with the Serre fibration $q:=(p_1\times\id_C)\circ P'_1$
    and the fiber bundle $P_1$.
    Let us fix a base-point $(a,b,c)\in A\times B\times C$,
    of fiber $F=a\times b\times c \times (abc) \simeq (abc)$ for $P_1$
    and of fiber $F'\simeq \{ (x,z) \ |\ x\in (ab) \text{ and } z\in (xc) \}$
    for $q$.
    According to Proposition~\ref{prop:compositionfiberintegration}
    and the remark after it,
    in order to show that the following diagram commutes:
    \begin{equation}\label{dia:g}
        \begin{gathered}
            \xymatrix{
                H_{*+4}(E_{A,B,C})
            & H_{*+4}(E_{(A,B),C}) \ar[l]^-{g_*} \\
            & H_*(A\times B\times C) \ar[lu]^-{P_1^*} \ar[u]^-{q^*}
            }
        \end{gathered}
    \end{equation}
    (with coefficients of every $H_*$ in the same ring $R$),
    one must prove that
    $g_* : H_4(F') \to H_4(F)$ commutes with the identity of $R$
    under the isomorphisms $H_4(F)\simeq R$ and $H_4(F')\simeq R$
    given by the local complex orientation.
    This comes from the fact that the quotient space $F'/((ab)\times c )$
    is canonically homeomorphic to $(abc)\simeq F$
    (in particular, preserving the orientation),
    the homeomorphism being induced by $g|_{F'}$.
    The long exact sequence of the couple $(F',(ab)\times c)$
    concludes.
    Therefore, diagram (\ref{dia:g}) commutes. 
    Thus, according to
    the left hand side of the diagram (\ref{dia:pj3}) together
    with the composition property $q^* = (P'_1)^*(p_1\times \id_C)^*$,
    one has
    $(P'_2)_*(P'_1)^*(p_1\times \id_C)^* = (P_2)_*(P_1)^*$
    and (\ref{eq:pjpj}) gives the first equality of (\ref{eq:pj3}).

    The second equality is proven in a symmetric way.
\end{proof}

\begin{proof}[Proof of Proposition~\ref{prop:pjuk}]
For now, let us work on $E_{m,n}$.
First, let us see that $\pj^*u=1$.
By naturality (\ref{dia:natural}), it boils down to
showing that $\pj^*:H^2((ab)) \to H^0(a\times b)$
maps the restriction of $u$ to $(ab)$ to $1\in H^0(a\times b)$ for all
$(a,b)\in\CP^m\times\CP^n$.
Now $E_{a,b} = a\times b\times (ab)$ so that $(p_2)^*$ is an isomorphism
sending the orientation class of $(ab)$ to the orientation class of $E_{a,b}$.
According to (\ref{ses:Gysin}), $(p_1)_*$ is also an isomorphism
(preserving the orientation), hence the result.

Let $u_0:= (p_2)^*u \in H^2(E_{m,n})$.
We must now study $(p_1)_*u_0^k$ for $k\in\N^*$.

Let $T\subset \CP^{m+n+1}$ be a tubular neighborhood of $\CP^n$ that is a deformation
retract. Its pullback $p_2^{-1}(T)$ is a deformation retract of $p_2^{-1}(\CP^n)$,
hence
\begin{multline}\label{eq:excision}
    H^*\left(E_{m,n},p_2^{-1}(\CP^n)\right) \simeq H^*\left(E_{m,n},p_2^{-1}(T)\right)\\
    \simeq H^*\left(E_{m,n}\setminus p_2^{-1}(\CP^n), p_2^{-1}(T\setminus\CP^n)\right),
\end{multline}
where the isomorphisms are induced by inclusion, the second one coming from excision.
The space $E_{m,n}\setminus p_2^{-1}(\CP^n)$ is the $\CP^1$-fiber bundle $E_{m,n}$
with one global section taken away, so it is a $\C$-fiber bundle.
Let $t\in H^2(E_{m,n},p_2^{-1}(\CP^n))$ be its Thom class
(under the natural identification given by (\ref{eq:excision})).
Therefore, according to Thom isomorphism theorem, the map
\begin{equation}\label{eq:Thom}
    H^*(\CP^m\times\CP^n) \to H^{*+2}(E_{m,n},p_2^{-1}(\CP^n)),
    \quad \alpha \mapsto t\smile (p_1)^*\alpha
\end{equation}
is an isomorphism.
By looking at restrictions to $E_{a,b}$'s, we see that
$t$ is non-zero on $H^*(E_{m,n})$ and is sent to $1\in H^0(\CP^m\times\CP^n)$
by $(p_1)_*$.
According to (\ref{ses:Gysin}), on $H^*(E_{m,n})$ we have $u_0-t=(p_1)^*v$
for some $v\in H^2(\CP^m\times\CP^n)$.
In order to find $v$, we consider the following commutative diagram:
\begin{equation}\label{dia:relative}
    \begin{gathered}
        \xymatrix{
            & & H^*(\CP^m\times\CP^n) \ar[ld]^-{(p_1)^*} \ar[d]^-{(p_1)^*} \\
            H^*\left(E_{m,n},p_2^{-1}(\CP^n)\right) \ar[r] & H^*(E_{m,n}) \ar[r]
            & H^*\left(p_2^{-1}(\CP^n)\right)\\
            H^*(\CP^{m+n+1},\CP^n) \ar[u]^-{(p_2)^*} \ar[r] & 
            H^*(\CP^{m+n+1}) \ar[u]^-{(p_2)^*} \ar[r] &
            H^*(\CP^n) \ar[u]^-{(p_2)^*} 
        }
    \end{gathered},
\end{equation}
where horizontal arrows are induced by inclusion and form exact sequences.
The restriction of $p_1$ to $p_2^{-1}(\CP^n)$ induces a homeomorphism
$\CP^m\times\CP^n \simeq p_2^{-1}(\CP^n)$.
Under this identification,
the restriction of $p_2$ to $p_2^{-1}(\CP^n)$ is the projection on
the second factor $\CP^n$.
Hence, the right-hand side vertical arrow $(p_2)^*$ of (\ref{dia:relative})
sends $u^k$ to $(p_1)^*u_2^k$.
In particular, by commutativity of (\ref{dia:relative}),
$u_0\in H^2(E_{m,n})$ is sent to $(p_1)^*u_2$.
Since $t\in H^2(E_{m,n})$ is in the image of the top left arrow,
it is sent to $0$ in $H^2(p_2^{-1}(\CP^n))$ by exactness.
Thus $u_0-t\in H^2(E_{m,n})$ is sent to $(p_1)^*u_2$ whereas
$(p_1)^*v\in H^2(E_{m,n})$ is sent to $(p_1)^*v\in H^2(p_2^{-1}(\CP^n))$
by commutativity of the up right triangle (with a slight abuse of notation).
Therefore $v=u_2$.

In order to study the powers of $u_0$, we now study the powers of $t\in H^2(E_{m,n})$.
Seen in $H^4(E_{m,n},p_2^{-1}(\CP^n))$, $t^2 = t\smile (p_1)^* (\lambda u_1 + \mu u_2)$
for some $\lambda,\mu\in\Z$ according to Thom isomorphism (\ref{eq:Thom}).
Let us first find the value of $\lambda$ by restricting the complex line bundle
$E_{m,n}\setminus p_2^{-1}(\CP^n)$ to the base space $\CP^m\times \CP^0$.
This complex line bundle is $E_{m,0}\setminus p_2^{-1}(\CP^0)$ and its Thom class $t'$
is the restriction of $t$ to 
\begin{equation*}
    H^2\left(E_{m,0},p_2^{-1}(\CP^0)\right) \simeq H^2 \left( E_{m,0} /
    p_2^{-1}(\CP^0) \right),
\end{equation*}
so that $t'^2 = \lambda t'\smile (p_1)^*u_1$.
Since $\CP^0$ is just a point, $p_2$ factors in a homeomorphism between
$E_{m,0} /p_2^{-1}(\CP^0)$ and $\CP^m * \CP^0$.
Thus $p_2$ induces an isomorphism of $\Z$-algebras
\begin{equation}\label{iso:Em0}
    H^*(\CP^m *\CP^0,\CP^0) \xrightarrow{\simeq} H^*(E_{m,0},p_2^{-1}(\CP^0)).
\end{equation}
According to the long exact sequence of the couple $(\CP^m*\CP^0,\CP^0)$,
$H^2(\CP^m*\CP^0,\CP^0)\simeq H^2(\CP^m*\CP^0)$ so that the generator
$u\in H^2(\CP^m*\CP^0)$ can naturally be seen in $H^2(\CP^m*\CP^0,\CP^0)$.
Under the isomorphism (\ref{iso:Em0}), $u$ is mapped to $t'$
so that $u^2$ is mapped to $t'^2$.
Thus $t'^2$ must be a generator of $H^4(E_{m,0},p_2^{-1}(\CP^0))$,
hence $\lambda = \pm 1$. By applying the orientation preserving
morphism $(p_1)_*$, we see that $\lambda = 1$.

Now, since $u_0 = t+(p_1)^*u_2$, one has
\begin{equation*}
    u_0^2 = t\smile (p_1)^*(u_1+(\mu + 2)u_2) + (p_1)^* u_2^2
\end{equation*}
hence $(p_1)_* u_0^2 = u_1 + (\mu +2)u_2$.
By symmetry of $(p_1)_* u_0^2$ in $u_1$ and $u_2$,
$\mu$ must be $-1$. Indeed, the above identity is
still true by restricting ourselves to $E_{1,1}$
and must be invariant under the map induced by $(a,b,c)\mapsto (b,a,c)$
that swaps $u_1$ and $u_2$.
Since $u_0 = t+(p_1)^*u_2$, one has that
\begin{equation*}
    u_0^k = \sum_{i+j=k} \binom{k}{i} t^i \smile (p_1)^* u_2^j.
\end{equation*}
Using $t^i = t\smile (p_1)^*(u_1-u_2)^{i-1}$ for $i\in\N$
and $(p_1)_*(t\smile (p_1)^*w)=w$, one finally gets
\begin{equation}\label{eq:p1u0k}
    (p_1)_* u_0^k = \sum_{i+j=k} \binom{k}{i} (u_1-u_2)^{i-1} u_2^j
    = \sum_{i+j=k-1} u_1^i u_2^j,
\end{equation}
the last equality can be obtained by identification of coefficients
of the polynomial expression in $u_1$ and $u_2$.
\end{proof}

\begin{proof}[Proof of Proposition~\ref{prop:fpjtimesbeta}]
    We first remark that there is a well-defined cap-product
    $H_*(E_{A,B})\times H^*(E_{A,B},p_2^{-1}(B))\to H_*(E_{A,B}\setminus p_2^{-1}(B))$
    compatible with the one defines in $A*B$ through the map $p_2$
    and defined the same way.
    This is summed up by saying that the left hand side of the following
    diagram is ``commutative'':
    \begin{equation}\label{dia:capEAB}
    \begin{gathered}
        \xymatrix{
            H_*(E_{A,B})\times H^*(E_{A,B},p_2^{-1}(B)) \ar[r]^-{\frown}
            \ar@<-8ex>[d]^-{(p_2)_*} 
            &
            H_*(E_{A,B}\setminus p_2^{-1}(B)) \ar[d]^-{(p_2)_*}
            \ar[r]^-{(p_1)_*}_-{\simeq} &
            H_*(A\times B) \ar[d]^-{(\textup{pr}_1)_*} \\
            H_*(A*B)\times H^*(A*B,B) \ar[r]^-{\frown} \ar@<-8ex>[u]^-{(p_2)^*} &
            H_*(A*B\setminus B)
            \ar[r]^-{(p_A)_*}_-{\simeq} &
            H_*(A) 
        }
    \end{gathered}.
\end{equation}
In this diagram, $\textup{pr}_1:A\times B\to A$ is the projection on the first factor.
The commutativity of the right hand side of the diagram comes from the
obvious commutativity
of the associated continuous maps: $p_A\circ p_2 = \textup{pr}_1\circ p_1$ on
$E_{A,B}\setminus p_2^{-1}(B)$.
Let $v'_B := (p_2)^*v_B \in H^{2\ell}(E_{A,B},p_2^{-1}(B))$.
The space $E_{A,B}\setminus p_2^{-1}(B)$ is the restriction to $A\times B$
of the $\C$-fiber bundle $E_{m,n}\setminus p_2^{-1}(\CP^n)$, so that
the following map
is an isomorphism for the same reason the map (\ref{eq:Thom}) was:
\begin{equation*}
    H^*(A\times B) \to H^{*+2}(E_{A,B},p_2^{-1}(B)),
    \quad w \mapsto t'\smile (p_1)^*w,
\end{equation*}
where $t'\in H^2(E_{A,B},p_2^{-1}(B))$ is the restriction of
the class $t$ in (\ref{eq:Thom}).
Therefore, $v'_B = t'\smile (p_1)^*w$ for some $w\in H^{2\ell-2}(A\times B)$
satisfying $w=(p_1)_*(v'_B)$ (we recall that $(p_1)_*t'=1$).
Seen in $H^{2\ell}(A*B)$, the class $v_B$ is the restriction of
$u^\ell$, so that, seen in $H^{2\ell}(E_{A,B})$, $v_B'$ is the
restriction of $u_0^\ell$. According to the identity (\ref{eq:p1u0k}),
one has
\begin{equation}\label{eq:p1vB}
    (p_1)_* v_B' = w = \sum_{i+j=\ell-1} u_1^i u_2^j,
\end{equation}
identifying $u_1$ and $u_2$ with their restrictions to
$H^*(A\times B)$ by a slight abuse of notation.
We can now compute, for all $\alpha\in H_*(A)$,
\begin{eqnarray*}
    (p_A)_*(\pj_*(\alpha\times\beta)\frown v_B) 
    &=&
    (p_A)_*\circ(p_2)_* ((p_1)^*(\alpha\times\beta)\frown v_B') \\
    &=&
    (\textup{pr}_1)_*\circ(p_1)_* ((p_1)^*(\alpha\times \beta) \frown v_B')\\
    &=&
    (\textup{pr}_1)_*\left((\alpha\times \beta)\frown \sum_{i+j=\ell-1} u_1^i
    u_2^j\right) \\
    &=& \sum_{i+j=\ell-1}
    (\textup{pr}_1)_*\left((\alpha\frown u^i)\times (\beta\frown u^j)\right) \\
    &=& \la u^{\ell-1},\beta\ra \alpha \frown u^0 = \alpha.
\end{eqnarray*}
The second equality follows from commutativity of the diagram (\ref{dia:capEAB}),
the third uses (\ref{eq:p1vB}) together with the projection formula 
$p_*(p^*\gamma\frown w) = \gamma\frown p_* w$
where $p$ is a sphere bundle.
By grading issues, only the indices $(i,j)=(0,\ell-1)$ contribute to the sum
and, by definition of $\beta$, $\la u^{\ell-1},\beta \ra = 1$.
The result of this computation is the statement we wanted to prove.
\end{proof}

\subsection{Integration along the fiber}
\label{se:fiberintegration}

In this section, we recall some well known properties of the
morphism of integration along fiber
(see \cite[Section~A.2]{CM12} and references therein).
Since we cannot find any reference concerning the proof of the composition
property, we give a proof of this key property used to prove
Proposition~\ref{prop:pjassociativity}.

Let $G$ be a group. Throughout this section,
$H_*(X)$ and $H^*(X)$ will denote respectively the singular homology
and the singular cohomology of the topological space or pair $X$
with coefficients in $G$.
If we want to put another group of coefficients $G'$,
we will write down explicitly $H_*(X;G')$ or $H^*(X;G')$
so that for instance $H_*(X;H_d(Y)) = H_*(X;H_d(Y;G))$
where $Y$ is a topological space or pair.

Let us assume that $\pi:X\to B$ is a Serre fibration with fiber $F$
which has the type of a CW complex of dimension $d$
(throughout this section, we will simply write that the fibers of $\pi$
have dimension $d$).
In order to simplify the statements, we will always assume that
$\pi_1(B)$ acts trivially on $H_*(F)$.
Then the Serre spectral sequence in homology $(E^r_{p,q})$ of $\pi$ satisfies
$E^2_{p,q} \simeq H_p(B;H_q(F)) = 0$ for $q>d$.
Hence, it gives natural morphisms
$E^2_{p,d} \to E^\infty_{p,d} \to H_{p+d}(X)$
for all $p$.
Let $\pi^* : H_*(B;H_q(F)) \to H_{*+d}(X)$ denote the composition of
the Serre isomorphism $H_*(B;H_d(F)) \simeq E^2_{*,d}$ with this map
$E^2_{*,d}\to H_{*+d}(X)$.
Dually, one can define a morphism $\pi_* : H^*(X) \to H^{*-d}(B;H^d(F))$.
In de Rham cohomology, the map $\pi_*$ can be easily defined on compact
smooth fiber bundles as induced by the integration of differential forms
along the fibers.

In the special case $F\simeq \sphere{n}$, the map $\pi_*$ corresponds
to the Gysin morphism.
This definition extends directly to relative fibrations
$\pi:(X,X')\to (B,B')$ and the induced maps commute with the long exact sequences
of pair and triple by naturality of the Serre spectral sequence.
Given a commuting square of (possibly relative) fibrations
\begin{equation*}
    \begin{gathered}
        \xymatrix{
            X \ar[d]^-{p}_-{F} \ar[r]^-{\tilde{f}} & Y \ar[d]^-{q}_-{F'}\\
            B \ar[r]^-{f} & C
        }
    \end{gathered},
\end{equation*}
with fibers $F$ and $F'$ of the same dimension $d$,
the naturality of the Serre spectral sequence induces the commutative square
\begin{equation}\label{dia:naturalfiberintegration}
    \begin{gathered}
        \xymatrix{
            H_{*+d}(X) \ar[r]^-{\tilde{f}_*} & H_{*+d}(Y)\\
            H_*(B;H_d(F)) \ar[u]^-{p^*} \ar[r]^-{f_*} &
            H_*(B;H_d(F')) \ar[u]^-{q^*}
        }
    \end{gathered},
\end{equation}
where $f_*$ sends a class $[\sigma\otimes h]$,
$\sigma\in C_*(B;\Z)$ and $h\in H_d(F)$,
on $[f_*\sigma \otimes \tilde{f}_* h ]$.

Finally, it satisfies the following composition property.
Let $\pi_1 : Y \to X$ and $\pi_2 : X \to B$ be (possibly relative) Serre fibrations
of respective fibers $F_1$ and $F_2$ of dimension $d_1$ and $d_2$.
Then $\pi := \pi_2 \circ\pi_1$ is a fibration whose fiber $F$
is a fibration over $F_2$ with fibers $F_1$.
According to the Serre spectral sequence, it has dimension $d_1+d_2$
and $H_{d_1+d_2}(F)$ is naturally isomorphic to $H_{d_2}(F_2;H_{d_1}(F_1))$.
\begin{prop}\label{prop:compositionfiberintegration}
    The following diagram commutes
\begin{equation*}
    \begin{gathered}
        \xymatrix{
            H_{*+d_2}(X;H_{d_1}(F_1)) \ar[r]^-{\pi_1^*}
            & H_{*+d_1+d_2}(Y) \\
            H_*(B;H_{d_2}(F_2;H_{d_1}(F_1))) \ar[u]^-{\pi_2^*} \ar[r]^-{\simeq}
            & H_*(B;H_{d_1+d_2}(F)) \ar[u]^-{\pi^*}
        }
    \end{gathered}
\end{equation*}
where the bottom isomorphism is induced by the isomorphism
$H_{d_2}(F_2 ; H_{d_1}(F_1) ) \simeq H_{d_1+d_2}(F)$ between the groups
of coefficients.
\end{prop}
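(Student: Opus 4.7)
The plan is to prove the composition property via a spectral sequence comparison, matching the Serre spectral sequence of $\pi$ with the result of iterating those of $\pi_2$ and $\pi_1$.

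First, I would explain where the coefficient isomorphism $H_{d_1+d_2}(F)\simeq H_{d_2}(F_2;H_{d_1}(F_1))$ comes from: it is the edge isomorphism in the Serre spectral sequence of the fibration $F_1\hookrightarrow F\twoheadrightarrow F_2$. Since $H_q(F_1)=0$ for $q>d_1$, all differentials with source $E^r_{d_2,d_1}$ vanish, and by the dimension vanishing above the top row one deduces that $E^\infty_{d_2,d_1}=E^2_{d_2,d_1}$ is not merely a subquotient but the whole of $H_{d_1+d_2}(F)$, as already observed for the outer Serre SS of $\pi$ at the top row.

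Next, I would fix a CW structure on $B$ with skeletons $B^{(p)}$ and pass to the induced filtrations $X_p:=\pi_2^{-1}(B^{(p)})$ of $X$ and $Y_p:=\pi^{-1}(B^{(p)})=\pi_1^{-1}(X_p)$ of $Y$. These underlie the Serre spectral sequences of $\pi_2$ and $\pi$ respectively. To relate them, I introduce a bifiltration of $Y$: refine $\{Y_p\}$ by further filtering each fiber $F=\pi^{-1}(\mathrm{pt})$ using the Serre filtration of $F_1\hookrightarrow F\twoheadrightarrow F_2$, and propagate this fiberwise filtration through $\pi$ by lifting via the Serre property. Collapsing the inner filtration first recovers the ordinary Serre SS of $\pi$ and its edge map $\pi^*$; collapsing the outer filtration first yields the SS of $\pi_2$ with coefficient system $H_*(F_1)$, followed by the SS of $\pi_1$. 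Because both routes compute subquotients of the same bifiltered $H_{p+d_1+d_2}(Y)$, and the filtration pieces at position $(p,d_2,d_1)$ are honest subgroups rather than quotients (by the same vanishing argument as in the excerpt, applied now in both directions), the two resulting edge maps land as identical classes in $H_{p+d_1+d_2}(Y)$.

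The final step is to verify that the identification on $E^2$-pages intertwines the two edge morphisms. This follows from the naturality of the Serre spectral sequence with respect to the inclusion of a fiber of $\pi$ combined with the naturality square (\ref{dia:naturalfiberintegration}): both $\pi^*$ and $\pi_1^*\circ\pi_2^*$ restrict over a single cell of $B$ to the Künneth inclusion for the trivial pieces, and they are glued globally by the same CW attaching data.

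The main obstacle is making the bifiltration argument rigorous, in particular checking that the identification $H_{d_1+d_2}(F)\simeq H_{d_2}(F_2;H_{d_1}(F_1))$ really intertwines the edge maps of the two spectral sequences and not merely the $E^2$-terms. An alternative route that avoids this subtlety is to reduce by Mayer--Vietoris and induction on the cell structure of $B$ to the case where both $\pi_1$ and $\pi_2$ are trivial product fibrations; in that case $\pi^*$, $\pi_1^*$ and $\pi_2^*$ all degenerate to Künneth cross products, and the composition property becomes the associativity $\alpha\times(\beta\times\gamma)=(\alpha\times\beta)\times\gamma$, which is standard.
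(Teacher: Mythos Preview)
Your proposal is a reasonable sketch but takes a different route from the paper, and neither of your two suggested arguments is carried out rigorously. The paper avoids bifiltrations entirely. Instead, it first arranges (by \cite{bar87}) that $B$ and $X$ are genuine CW complexes and that $\pi_2$ is a locally trivial \emph{cellular} fibration. The point of cellularity is that, since the fibres of $\pi_2$ have dimension $d_2$, the preimage filtration $X_p=\pi_2^{-1}(B^p)$ sits inside the skeletal filtration $X^{p+d_2}$. Pulling back by $\pi_1$ gives an inclusion of the two filtrations of $Y$, namely $Y_p\subset Y_{1;p+d_2}$, and hence a (degree-shifted) morphism of Serre spectral sequences $E^r_{p,q}\to E^r_{1;p+d_2,q-d_2}$. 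Chasing the edge maps then shows $\pi^*=\pi_1^*\circ f$ for a morphism $f:H_*(B;H_{d_1+d_2}(F))\to H_{*+d_2}(X;H_{d_1}(F_1))$ read off the $E^2$-pages. The identification $f=\pi_2^*$ under the coefficient isomorphism is checked at the level of the cellular chain complexes via the naturality of the Serre chain isomorphisms $\Psi,\Psi_1,\Psi_2$, together with the observation that the spectral sequence for the cellular filtration $(X^{p+d_2})$ degenerates at $E^2$.

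Compared to your approach: the paper's trick of passing to a cellular model of $\pi_2$ replaces your vague ``fiberwise Serre filtration propagated via the Serre property'' by an actual inclusion of filtrations, which immediately gives a spectral sequence map with all the required naturality. Your bifiltration idea is in the right spirit but you yourself flag the gap --- showing that the two collapses really give the same edge maps --- and this is exactly where the work lies. Your Mayer--Vietoris alternative would need care with how the edge maps $\pi^*,\pi_1^*,\pi_2^*$ behave under Mayer--Vietoris, which is not entirely formal since they are not induced by continuous maps.
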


In our article, the fibers $F_1$, $F_2$ and $F$ are naturally oriented
so that there are canonical isomorphisms between their top degree homology
groups and the coefficient group and one does not have to bother with
changes of coefficient group.
However, if one wants to avoid the change of coefficients in the naturality
statement (\ref{dia:naturalfiberintegration}) when
$H_d(F) \simeq G$ and $H_d(F') \simeq G$,
the map $\tilde{f}_* : H_d(F) \to H_d(F')$ must send preferred generator
to preferred generator.

\begin{proof}[Proof of Proposition~\ref{prop:compositionfiberintegration}]
    Without loss of generality, one can assume that $B$ and $X$ are actual
    CW complexes and that $\pi_2$ is a locally trivial cellular fibration
    \cite{bar87}.
    We denote by $E$, $E_1$ and $E_2$ Serre spectral sequences of
    $\pi$, $\pi_1$ and $\pi_2$ respectively, $E_2$ having $H_{d_1}(F_1)$
    coefficients.
    Let $B^p$ and $X^p$ denote respectively the $p$-skeleton of $B$ and
    the $p$-skeleton of $X$.
    Let $X_p := \pi_2^{-1}(B^p)$, $Y_p := \pi^{-1}(B^p)$ and
    $Y_{1;p} := \pi_1^{-1}(X^p)$ denote the filtration of the spaces
    $X$ and $Y$ associated with $E_2$, $E$ and $E_1$ respectively.
    Therefore, for instance the first page of $E_2$ is given by
    $E^1_{2;p,q} := H_{p+q}(X_p,X_{p-1})$.

    Since $\pi_2$ is a cellular fibration with fibers of dimension $d_2$,
    $X_p = \pi_2^{-1}(B^p)$ is included in $X^{p+d_2}$.
    Hence, $Y_p \subset Y_{1;p+d_2}$ and this inclusion between filtrations
    induces a morphism of spectral sequences (with a shift in degree)
    $E^r_{p,q} \to E^r_{1;p+d_2,q-d_2}$.
    Therefore, one gets the following commutative diagram
    \begin{equation*}
        \begin{gathered}
            \xymatrix{
                & H_{*+d_1+d_2}(Y) \ar@{=}[r] & H_{*+d_1+d_2}(Y) \\
                & E^\infty_{*,d_1+d_2} \ar[r] \ar[u]
                & E^\infty_{1;*+d_2,d_1} \ar[u] \\
                H_*(B;H_{d_1+d_2}(F)) \ar[uur]^-{\pi^*}
                & E^2_{*,d_1+d_2} \ar[l]^-{\simeq} \ar[r] \ar[u]
                & E^2_{1;*+d_2,d_1} \ar[r]^-{\simeq} \ar[u]
                & H_{*+d_2}(X;H_{d_1}(F_1)) \ar[luu]_-{\pi_1^*}
            }
        \end{gathered},
    \end{equation*}
    where both the left and the right ``squares'' are the ones defining morphisms
    $\pi^*$ and $\pi_1^*$.
    The bottom row allows us to define a morphism
    $f:H_*(B;H_{d_1+d_2}(F)) \to H_{*+d_2}(X;H_{d_1}(F_1))$.
    According to the last diagram, it is enough to prove that $f=\pi_2^*$
    under the identification $H_{d_1+d_2}(F) \simeq H_{d_2}(F_2;H_{d_1}(F_1))$
    to conclude.

    We recall that the Serre isomorphism $E^2_{*,d_1+d_2} \simeq H_*(B;H_{d_1+d_2}(F))$
    is induced by a chain isomorphism between the chain complex
    $E^1_{*,d_1+d_2} = H_{*+d_1+d_2}(Y_*,Y_{*-1})$
    and the chain complex of the cellular filtration $(B^p)$ of $B$
    that is $H_*(B^*,B^{*-1};H_{d_1+d_2}(F))$.
    We denote by $\Psi : H_{*+d_1+d_2}(Y_*,Y_{*-1}) \to H_*(B^*,B^{*-1};H_{d_1+d_2}(F))$
    the chain isomorphism associated with $\pi$
    and by $\Psi_1$ and $\Psi_2$ the chain isomorphisms associated with $\pi_1$
    and $\pi_2$ respectively.
    Since these chain isomorphisms are natural (this is included in the proof of
    the naturality of the Serre spectral sequence),
    one gets the following commutative diagram of chain complexes:
    \begin{equation*}
        \begin{gathered}
            \xymatrix{
                H_{*+d_1+d_2}(Y_* , Y_{*-1}) \ar[r] \ar[d]^-{\Psi}_-{\simeq}
            & H_{*+d_1+d_2}(Y_{1;*+d_2},Y_{1;*+d_2-1}) \ar[d]^-{\Psi_1}_-{\simeq} \\
            H_*(B^*,B^{*-1};\Z)\otimes H_{d_1+d_2}(F) \ar[d]^-{\id\otimes u}_-{\simeq}
            & H_{*+d_2}(X^{*+d_2},X^{*+d_2-1};\Z)\otimes H_{d_1}(F_1)\\
            H_*(B^*,B^{*-1};\Z)\otimes H_{d_2}(F_2;H_{d_1}(F_1))
            & H_{*+d_2}(X_*,X_{*-1})\otimes H_{d_1}(F_1) \ar[u]
            \ar[l]^-{\Psi_2}_-{\simeq}
            }
        \end{gathered},
    \end{equation*}
    where $u$ denotes the natural isomorphism $H_{d_1+d_2}(F)\to H_{d_2}(F_2;H_{d_1}(F_1))$.
    By passing to homology, one gets the following commutative diagram:
\begin{equation*}
        \begin{gathered}
            \xymatrix{
                E^2_{*,d_1+d_2} \ar[r] \ar[d]^-{\Psi_*}_-{\simeq}
            & E^2_{1;*+d_2,d_1} \ar[d]^-{(\Psi_1)_*}_-{\simeq} \\
            H_*(B;H_{d_1+d_2}(F)) \ar[d]^-{(\id\otimes u)_*}_-{\simeq}
            \ar[r]^-{f}
            & H_{*+d_2}(X;H_{d_1}(F_1)) \\
            H_*(B;H_{d_2}(F_2;H_{d_1}(F_1)))
            & E^2_{2;*,d_2} \ar[u] \ar[r]
            \ar[l]^-{(\Psi_2)_*}_-{\simeq}
            & E^\infty_{2;*,d_2} \ar[lu]
            }
        \end{gathered},
    \end{equation*}
    where only the commutativity of the triangle is not a direct consequence
    of the previous diagram.
    The commutativity of the triangle is a consequence of the naturality
    of the morphism induced by the inclusion of filtrations
    $X_p \subset X^{p+d_2}$ between the associated spectral sequences.
    Indeed, since $(X^{p+d_2})$ is a cellular filtration,
    the associated spectral sequence whose first page
    is $(H_{p+q}(X^{p+d_2},X^{p+d_2-1}))_{p,q}$ degenerates
    at the second page.
    The bottom part of the diagram shows that $f$ is indeed
    $\pi_2^*$ under the identification induced by $u$.
\end{proof}

\section{Smith-type inequality for barcodes of Hamiltonian diffeomorphisms via
the minimal Novikov field\\ by Egor Shelukhin}
\label{se:She}

We present an alternative argument proving \cite[Theorem D]{She19}, which is the key Smith-type inequality for barcodes of Hamiltonian diffeomorphisms for the argument proving the Hofer-Zehnder conjecture for closed monotone symplectic manifolds with semi-simple even quantum homology algebra. While the approach described in \cite{She19} is more algebraically conceptual, it relies on the use of Floer complexes with coefficients in the universal Novikov ring, which are quite difficult to make sense of geometrically. The current approach, at the cost of certain more ad hoc arguments, reduces the consideration to filtered Floer homology with the standard minimal Novikov field (with quantum variable of degree twice the minimal Chern number). This choice of coefficients is more geometric, and it is this new proof that is most amenable to interpretation in the setting of generating functions. 

To set up the result and its proof we recall a few basic notions on barcodes of Hamiltonian diffeomorphisms. We follow the exposition in \cite{She19} and \cite{AtShe20}, whereto we refer for further detail and discussion of relevant notions. Suppose that $(M,\om)$ is a closed monotone symplectic manifold, that is there exists $\kappa \in \R_{>0}$ such that for each $A \in H^S_2(M;\Z),$ $\langle [\om], A \rangle = \kappa \langle c_1(TM), A \rangle,$ where $H^S_2(M;\Z)$ is the image of the Hurewicz map $\pi_2(M) \to H_2(M;\Z).$ The minimal Chern number of $(M,\om)$ is the positive generator $N > 0$ of the image of the map $c_1(TM):  H^S_2(M;\Z) \to \Z,$ given by $A \mapsto \langle c_1(TM), A \rangle.$ Set the rationality constant $\rho \in \R_{>0}$ on $(M,\om)$ to be $\rho = \kappa \cdot N.$

Fix a ground field $\bK.$ For a time-dependent Hamiltonian $H$ on a closed monotone symplectic manifold $(M,\om)$, denote by $\mrm{Spec}(H)\subset \R$ the set of all actions \[\cl{A}_H(\ol{x}) = \int_0^1 H(t,x(t)) - \int_{\ol{x}} \om,\] where $\ol{x}$ runs over contracting disks (considered up to the equivalence relation of having the same $\om$-areas) of $1$-periodic orbit $x$ of the Hamiltonian flow of $H.$ If $H$ is non-degenerate, that is its time-one map $\phi$ has its graph transverse to the diagonal in $M \times M,$ we denote by $CF(H,J)$ the Floer complex of $H$ with respect to a generic $\om$-compatible time-dependent almost complex structure $J = (J_t)_{t \in S^1}.$ This complex is filtered by a natural extension of $\cl{A}_H$ to $CF(H,J),$ in the sense that $\cl{A}_H(d_{H,J} x) < \cl{A}_H(x)$ for all non-zero $x \in CF(H,J).$ For $a,b \notin \Spec(H),$ $-\infty \leq a < b \leq +\infty$ we call the interval $I =(a,b)$ admissible, and denote by $CF(H,J)^{<a},$ $CF(H,J)^{<b}$ the subcomplexes given by elements of filtration level smaller than $a$ and $b$ respectively, and set $CF(H,J)^{I} = CF(H,J)^{<b}/CF(H,J)^{<a}$ for the quotient complex. We denote by  $HF(H)^I$ the homology of this complex, called the Floer homology of $H$ in action window $I,$ and the collection of $HF(H)^{I}$ for all admissible intervals $I,$ together with natural maps $HF(H)^{I} \to HF(H)^{I'}$ for $I = (a,b), I'=(a',b')$ with $a \leq a', b\leq b'$ induced by inclusions of complexes, is called the filtered Floer homology of $H.$ For later use, we call an admissible interval $I = (a,b)$ $p$-admissible for $p \in \Z_{>0}$ if $pa, pb \notin \Spec(H)^{+p},$ where the $p$-fold sum $A^{+p}$ of a subset $A \subset \R$ is the set of all sums $a_1 + \ldots + a_p,$ where $a_j \in A$ for all $1 \leq j \leq p.$

Note that for $I = (-\infty,\infty),$ (a suitable completion of) $CF(H,J) = CF(H,J)^{I},$ as well as $HF(H) = HF(H)^I$ can be considered to be modules over the minimal Novikov field \[ \Lambda_{\tmin} = \bK[[q^{-1},q]\] where $q$ is a formal variable of degree $2N.$ By the PSS-isomorphism \cite{PSS96} $HF(H) \cong QH(M;\Lambda_{\tmin}),$ the right-hand side denoting the quantum homology algebra of $(M,\om).$ As a $\Lambda_{\tmin}$-module, $QH(M;\Lambda_{\tmin}) = H_*(M;\bK) \otimes_{\bK} \Lambda_{\tmin},$ and the PSS-isomorphism is an isomorphism of $\Lambda_{\tmin}$-modules. We note that ignoring the grading, for each admissible interval $I,$ we have $HF(H)^{I} \cong HF(H)^{I + \rho},$ the isomorphism being given by multiplication by $q.$ We express this fact by saying that the area, or the valuation, of $q$ is $\rho.$

To a time-dependent Hamiltonian $H$ on a closed monotone symplectic manifold $(M,\om)$, the set $\mrm{Fix}_c(\phi)$ of whose contractible fixed points is finite, we can associate a $\rho$-periodic barcode, $\cl{B}(H;\bK) = \{ (I_j,m_j)\},$ where $I_j,$ called {\em bars}, are intervals of the form $(a,b],$ or $(a,\infty),$ with endpoints in $\mrm{Spec}(H)$ and $m_j \in \Z_{>0}$ are their multiplicities. The $\rho$-periodicity means that if $(I,m) \in \cl{B}(H;\bK)$ then $(I+\rho,m) \in \cl{B}(H;\bK),$ or alternatively that the group $\rho \cdot \Z$ acts on $\cl{B}(H;\bK).$ This barcode has a number of properties summarized in \cite[Proposition 22]{AtShe20}:
	
\begin{enumerate}[label = (\roman*)]
	\item For each window $J = (a,b)$ in $\R,$ with $a,b \notin \Spec(H),$ only a finite number of intervals $I$ with $(I,m) \in \cl{B}$ have endpoints in $J.$ Furthermore, \[ \dim_{\bK} HF(H)^{J} = \displaystyle\sum_{(I,m) \in \cl{B}(H),\; \# \partial I \cap J = 1} m,\] where for an interval $I = (a,b],$ $\partial I = \{a,b\},$ and for $I = (a, \infty),$ $\partial I = \{a\}.$
	\item In particular for $a \in \Spec(H),$ and $\eps > 0$ sufficiently small, so that we have $(a-\eps, a+\eps) \cap \Spec(H) = \{a\},$ \[ \dim_{\bK} HF(H)^{(a-\eps, a+\eps)} = \displaystyle\sum_{(I,m) \in \cl{B}(H),\; a \in \partial I} m,\] \[ \dim_{\bK} HF(H)^{(a-\eps, a+\eps)} = \displaystyle \sum_{ \cl{A}(\ol{x}) = a } \dim_{\bK} HF^{\loc}(H, \ol{x}).\]
	\item There are $K(\phi,\bK)$ orbits of finite bars counted with multiplicity, and $B(\bK)$ orbits of infinite bars counted with multiplicity, under the $\rho \cdot \Z$ action on $\cl{B}(H).$ These numbers satisfy: \[B(\bK) = \dim_{\bK} H_*(M;\bK)\] and \[ N(\phi,\bK) = 2 K(\phi,\bK) + B(\bK),\] where \[ N(\phi,\bK) = \sum \dim_{\bK} HF^{\loc}(\phi, x)\] is the {\em homological count of the fixed points of $\phi,$} the sum running over all the set $\fix(\phi)$ of its fixed points.
	\item There are $K(\phi,\bK)$ {\em bar-lengths} corresponding to the finite orbits, \[ 0 < \beta_1(\phi,\bK) \leq \ldots \leq \beta_{K(\phi,\bK)}(\phi,\bK),\] which depend only on $\phi.$ We call \[ \beta(\phi,\bK) = \beta_{K(\phi,\bK)}(\phi,\bK) \] the {\em boundary-depth} of $\phi,$ and \[ \beta_{\mrm{tot}}(\phi,\bK) = \sum_{1 \leq j \leq {K(\phi,\bK)}} \beta_j(\phi,\bK)\] its {\em total bar-length}.
	\item Each spectral invariant $c(\alpha, H) \in \Spec(H)$ for $\alpha \in QH_*(M) \setminus \{0\}$ is a starting point of an infinite bar in $\cl{B}(H),$ and each such starting point is given by a spectral invariant.
	\item \label{barcodes, change of H} If $H'$ is another Hamiltonian generating $\phi,$ then $\cl{B}(H') = \cl{B}(H)[c],$ for a certain constant $c \in \R,$ where $\cl{B}(H)[c] = \{ (I_i-c,m_i)\}_{i \in \cl{I}}.$ 
	\item \label{barcodes, change of coefficients} If $\mathbb{\bK}$ is a field extension of $\mathbb{F},$ and $H$ is a Hamiltonian, then $\cl{B}(H; \bK) = \cl{B}(H; \bF).$ In particular $\cl{B}(H; \bK) = \cl{B}(H; \F_p)$ if $\mrm{char}(\bK) = p,$ and $\cl{B}(H; \bK) = \cl{B}(H; \Q)$ if $\mrm{char}(\bK) = 0.$
		
	\end{enumerate}

We are now in a situation to formulate the key Smith-type inequality.

\begin{thm}[{\cite[Theorem D]{She19}}]\label{thm: beta tot}
	Let $\phi \in \ham(M,\om)$ be a Hamiltonian diffeomorphism of a closed monotone symplectic manifold $(M,\om).$ Suppose that $\fix(\phi^p)$ is finite. Then \[ p\cdot\beta_{\tot}(\phi,\F_p) \leq \beta_{\tot}(\phi^p,\F_p).\]
\end{thm}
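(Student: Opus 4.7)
The plan is to adapt the generating-function argument of Section~\ref{se:smith} to the Floer setting, using the minimal Novikov field $\Lambda_{\tmin} = \F_p[[q^{-1},q]$ in place of the universal Novikov field in order to keep all constructions geometric and work over $\F_p$-vector spaces of finite total dimension in each action window. The central idea is the same as that exploited in Proposition~\ref{prop:smith}: realize the $p$-iterated object as a complex carrying a cyclic $\Z/p$-action whose fixed subcomplex splits as a direct sum, indexed by $q\in\{-(p-1)/2,\dots,(p-1)/2\}$, of $q\rho/p$-shifted copies of the original complex; then extract a dimension inequality from the classical Smith inequality in characteristic $p$, and integrate it against the bar-length integral formula.

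The first step is to fix a non-degenerate Hamiltonian $H$ generating $\phi$ and a $\Z/p$-invariant (with respect to cyclic reparametrization $t\mapsto t+1/p$) regular Floer datum on $H^{\#p}$, so that the Floer complex $CF(H^{\#p};\F_p)$ over $\Lambda_{\tmin}$ becomes a chain complex of $\F_p[\Z/p]$-modules with the action-filtration respected by the $\Z/p$-action. The second step is the identification of the fixed part: the fixed $1$-periodic orbits of $H^{\#p}$ are exactly the $p$-fold covers of $1$-periodic orbits of $H$, and each capped $1$-periodic orbit $\ol x$ of $H$ contributes, by the choice of the rotation $q/p$ of its cap, a family of $p$ capped orbits $\ol x^{(q)}$ of $H^{\#p}$ with actions $\cl A_{H^{\#p}}(\ol x^{(q)})=p\cl A_H(\ol x)+q\rho$. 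The corresponding fixed subcomplex $C^{\Z/p}\subset CF(H^{\#p};\F_p)$ therefore decomposes, at the level of filtered complexes, as
\begin{equation*}
    C^{\Z/p} \;\cong\; \bigoplus_{q=-(p-1)/2}^{(p-1)/2} CF(H;\F_p)\big[\tfrac{q\rho}{p}\big],
\end{equation*}
where $[\cdot]$ denotes action shift (this identification requires a moduli-theoretic match between $\Z/p$-invariant Floer cylinders for $H^{\#p}$ and ordinary Floer cylinders for $H$). The third step applies the classical Smith inequality in characteristic $p$: for every $p$-admissible window $(a,b)$ one obtains
\begin{equation*}
    \dim_{\F_p} HF(H^{\#p})^{(pa,pb)} \;\geq\; \sum_{q=-(p-1)/2}^{(p-1)/2} \dim_{\F_p} HF(H)^{(a+q\rho/p,\,b+q\rho/p)}.
\end{equation*}
The fourth step is the Floer analogue of Proposition~\ref{prop:betatotint}: the total finite bar length satisfies $2\beta_{\tot}(\phi,\F_p)+\dim_{\F_p} H_*(M;\F_p)=\frac{1}{\rho}\int_0^\rho \dim_{\F_p} HF(H)^{(t,t+\rho)}\,dt$. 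Integrating the dimension inequality above for $a=t$, $b=t+\rho$ against $t\in[0,\rho]$, exactly as in the proof of Corollary~\ref{cor:betatotSmith}, yields $\beta_{\tot}(\phi^p,\F_p)\geq p\,\beta_{\tot}(\phi,\F_p)$.

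The main obstacle is the second step: one must produce a $\Z/p$-equivariant regular Floer datum on $H^{\#p}$ such that the restriction of the Floer differential to $C^{\Z/p}$ reproduces the direct sum of shifted Floer differentials for $H$, and such that the obvious identification at the generator level is compatible with filtrations. This requires an equivariant transversality argument (or a homological-perturbation-type replacement) that controls both the moduli spaces of $\Z/p$-invariant cylinders and the action shifts introduced by the $q$-twist in the cappings. Once this matching is in place, the remaining Smith inequality and integration arguments are essentially formal, as in Section~\ref{se:smith}, and the minimal Novikov coefficients keep everything at the level of finite-dimensional $\F_p$-vector spaces per action window, which is what makes the bar-length integration rigorous.
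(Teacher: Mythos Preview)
Your overall strategy --- obtain a Smith-type dimension inequality between $HF(H^{\#p})$ in window $(pa,pb)$ and a sum of $p$ shifted copies of $HF(H)$ in windows $(a+q\rho/p,b+q\rho/p)$, then integrate using the bar-length formula --- matches the paper exactly; your third-step inequality is precisely equation~\eqref{eq: Smith Novikov}, and your fourth step is the integration argument the paper carries out. The divergence is entirely in how that dimension inequality is produced.

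The gap you yourself flag in step two is genuine and, as stated, not surmountable by the route you propose. Achieving a $\Z/p$-equivariant \emph{regular} Floer datum for $H^{\#p}$ under loop rotation is exactly the kind of equivariant transversality that is known to fail generically: one cannot simultaneously make the moduli spaces cut out transversally and $\Z/p$-invariant, because multiply-covered (i.e.\ $\Z/p$-invariant) cylinders are precisely where transversality breaks. Without this, there is no honest subcomplex $C^{\Z/p}\subset CF(H^{\#p})$ on which the differential restricts, and the classical Smith inequality --- which is a statement about spaces, not about chain complexes with a group action --- has no direct algebraic analogue of the form $\dim H(C)\geq \dim H(C^{\Z/p})$ for arbitrary $\F_p[\Z/p]$-complexes.

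The paper circumvents this entirely. It never puts a $\Z/p$-action on $CF(H^{\#p})$; instead it puts the cyclic permutation action on the \emph{tensor power} $CF(H)^{\otimes p}$, which is a purely algebraic operation requiring no equivariant transversality. The passage to $HF(H^{(p)})$ is then effected by the $\Z/p$-equivariant pants product $\cl{P}^{p\cdot I}$ of \cite{She19,SheZhao19}, constructed via a Borel/telescope model that uses ordinary non-equivariant perturbations. The splitting into $p$ shifted copies of $HF(H)$ comes not from a fixed-point identification but from the quasi-Frobenius isomorphism and the field extension $\Lambda_{\bK}\hookrightarrow\Lambda_{\bK}^{1/p}$, and the Smith-type bound is extracted from the comparison $\dim_{\bK}\wh{HF}_{eq}(H^{(p)})^{p\cdot I}\leq 2\dim_{\bK}HF(H^{(p)})^{p\cdot I}$ rather than from the classical topological Smith inequality. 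In short: same endpoint inequality, same integration, but the paper replaces your equivariant-transversality step with Tate cohomology of the tensor power plus the equivariant pants product, which is where all the actual work lies.
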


We now give a new proof of this result, that stays within the realm of filtered Floer homology with coefficients in $\Lambda_{\tmin},$ temporarily passing through a slightly larger field.

\begin{proof}[Proof of Theorem \ref{thm: beta tot}]
	
Suppose for this proof that $p$ is odd, as the case of $p=2$ is simpler and is left to the reader.
	
We start by observing that for a $p$-admissible interval $I = (a,b),$ the $\Z/(p)$-equivariant pants product \[\cl{P}^{p\cdot I}: \wh{H}(\Z/(p), (CF(H)^{\otimes p})^{p\cdot I}) \to \wh{HF}_{eq}(H^{(p)})^{p\cdot I}\] is well-defined by arguments from \cite[Section 6]{She19}, and is an isomorphism by the same spectral sequence argument as in \cite{SheZhao19}. It is crucial to note that here, the coefficients are taken in the Novikov field $\Lambda_{\bK} = \bK[[q^{-1},q],$ where our base field is the field $\bK = \F_p[u^{-1},u]]$ of Laurent polynomials in a formal variable $u,$ and the tensor product of Floer complexes  $CF(H)^{\otimes p}$ is taken over $\Lambda_{\bK}.$

Disregarding grading, we define a new Novikov field \[\Lambda^{1/p}_{\bK} = \Lambda_{\bK}(q^{1/p}) = \bK[[t^{-1},t],\] into which $\Lambda_{\bK}$ embeds by sending $q$ to $t^p.$ Observe that $t$ has valuation $\rho/p.$ The theory of quasi-Frobenius maps as in \cite{SheZhao19} now yields an isomorphism \[F^{I}: HF(H;\Lambda^{1/p}_{\bK})^{I} \otimes \bK \langle{\theta}\rangle \to \wh{H}(\Z/(p), (CF(H)^{\otimes p})^{p\cdot I}),\] where $HF(H;\Lambda^{1/p}_{\bK})^{I}$ denotes the Floer homology of the Hamiltonian $H$ with coefficients in $\Lambda^{1/p}_{\bK}$ taken in the action window $I,$ and $\langle{\theta}\rangle$ is the exterior algebra over $\F_p$ on the formal variable $\theta$ of degree $1.$ Note that $\dim_{\F_p} \langle{\theta}\rangle = 2.$

Now rewrite $HF(H;\Lambda^{1/p}_{\bK})^{I}$ in terms of usual filtered Floer homology, considered as ungraded: \[HF(H;\Lambda^{1/p}_{\bK})^{I} \cong \displaystyle\bigoplus_{0 \leq j < p }  HF(H)^{I+j \rho/p}.\] Note that as $HF(H)^{I + \rho} \cong HF(H)^{I},$ we can replace the interval $0 \leq j < p,$ by any other interval with $p$ integers. For example we can pick the symmetric interval $-(p-1)/2 \leq j \leq (p-1)/2$ when $p$ is odd. Taking dimensions over $\bK,$ and using the inequality \[ \dim_{\bK} \wh{HF}_{eq}(H^{(p)})^{p\cdot I} \leq 2 \dim_{\bK}HF(H^{(p)})^{p\cdot I},\] whose proof follows that in \cite{SheZhao19} verbatim, we obtain after canceling a multiple of $2$ that \begin{equation}\label{eq: Smith Novikov} \sum_{j=0}^{p-1} \dim_{\bK} HF(H)^{I + j \rho/p} \leq \dim_{\bK}HF(H^{(p)})^{p\cdot I}.\end{equation} In fact, we could replace the right-hand side by the generally smaller quantity\footnote{This quantity can again be reduced. We refer to \cite{SheZhao19} for this argument.} $\dim_{\bK} (HF(H^{(p)})^{p\cdot I})^{\Z/(p)},$ the invariant subspace being considered with respect to a natural $\Z/(p)$-action.

We proceed by considering an interval $I_0$ of length $l = |I_0| = k\rho,$ with $k \in \Z_{>0}$ sufficiently large, so that $l > \beta(\phi,\bK),$ and $pl > \beta(\phi^p,\bK).$ Set $I_t = I_0 + t$ for the interval $I_0$ shifted by $t \in [0,l).$ We shall consider Equation \eqref{eq: Smith Novikov} for $I = I_t$ and integrate it over $t$ in $[0,l),$ keeping in mind that for an interval $J,$ $\dim HF(H)^{J}$ is given as the number of bars $(a,b]$ in the barcode of $H$ such that $\# (J \cap \{a,b\}) = 1.$ Note that any admissible interval of length $l = k \rho$ contains precisely $k \cdot B(\bK)$ infinite bars. Furthermore, each finite bar of length $\beta$ contributes $2 k \beta$ to the integral. Therefore we obtain that \[ p\cdot l(k B(\bK)) + p\cdot 2k\beta_{\tot}(\phi) \leq \frac{1}{p} \Big((pl) (pk B(\bK)) + 2pk \beta_{\tot}(\phi^p)\Big),\] since $p\cdot I$ is of length $pl = pk \rho.$ This simplifies to \[p\cdot \beta_{\tot}(\phi,\bK) \leq \beta_{\tot}(\phi^p,\bK),\] which yields the desired inequality, since by property \ref{barcodes, change of coefficients} of barcodes
of Hamiltonian diffeomorphisms $\beta_{\tot}(\phi,\bK) = \beta_{\tot}(\phi,\F_p),$ $\beta_{\tot}(\phi^p,\bK) = \beta_{\tot}(\phi^p,\F_p).$ 
\end{proof}

\bibliographystyle{plain}
\bibliography{biblio} 

\begin{thebibliography}{10}

\bibitem{periodicCPd}
Simon {Allais}.
\newblock {On periodic points of Hamiltonian diffeomorphisms of
  $\mathbb{C}\text{P}^d$ via generating functions}.
\newblock {\em arXiv e-prints}, page arXiv:2004.02165, April 2020.

\bibitem{AtShe20}
Marcelo~S. {Atallah} and Egor {Shelukhin}.
\newblock {Hamiltonian no-torsion}.
\newblock {\em arXiv e-prints}, page arXiv:2008.11758, August 2020.

\bibitem{Bar94}
Serguei~A. Barannikov.
\newblock The framed {M}orse complex and its invariants.
\newblock In {\em Singularities and bifurcations}, volume~21 of {\em Adv.
  Soviet Math.}, pages 93--115. Amer. Math. Soc., Providence, RI, 1994.

\bibitem{bar87}
Donald~W. Barnes.
\newblock The simplicial bundle of a {CW} fibration.
\newblock {\em Proc. Amer. Math. Soc.}, 101(3):559--562, 1987.

\bibitem{BL14}
Ulrich Bauer and Michael Lesnick.
\newblock Induced matchings of barcodes and the algebraic stability of
  persistence.
\newblock In {\em Computational geometry ({S}o{CG}'14)}, pages 355--364. ACM,
  New York, 2014.

\bibitem{Bor60}
Armand Borel.
\newblock {\em Seminar on transformation groups}.
\newblock With contributions by G. Bredon, E. E. Floyd, D. Montgomery, R.
  Palais. Annals of Mathematics Studies, No. 46. Princeton University Press,
  Princeton, N.J., 1960.

\bibitem{Cha84}
Marc Chaperon.
\newblock Une id\'{e}e du type ``g\'{e}od\'{e}siques bris\'{e}es'' pour les
  syst\`emes hamiltoniens.
\newblock {\em C. R. Acad. Sci. Paris S\'{e}r. I Math.}, 298(13):293--296,
  1984.

\bibitem{CM12}
David Chataur and Luc Menichi.
\newblock String topology of classifying spaces.
\newblock {\em J. Reine Angew. Math.}, 669:1--45, 2012.

\bibitem{CG20}
Erman {\c C\. inel\. i} and Viktor~L. {Ginzburg}.
\newblock {On the iterated Hamiltonian Floer homology}.
\newblock page arXiv:1902.06369.
\newblock To appear in \textit{Communications in Contemporary Mathematics}.

\bibitem{CKRTZ}
Brian Collier, Ely Kerman, Benjamin~M. Reiniger, Bolor Turmunkh, and Andrew
  Zimmer.
\newblock A symplectic proof of a theorem of {F}ranks.
\newblock {\em Compos. Math.}, 148(6):1969--1984, 2012.

\bibitem{CZ86}
C.~Conley and E.~Zehnder.
\newblock A global fixed point theorem for symplectic maps and subharmonic
  solutions of {H}amiltonian equations on tori.
\newblock In {\em Nonlinear functional analysis and its applications, {P}art 1
  ({B}erkeley, {C}alif., 1983)}, volume~45 of {\em Proc. Sympos. Pure Math.},
  pages 283--299. Amer. Math. Soc., Providence, RI, 1986.

\bibitem{For85}
Barry Fortune.
\newblock A symplectic fixed point theorem for {${\bf C}{\rm P}^n$}.
\newblock {\em Invent. Math.}, 81(1):29--46, 1985.

\bibitem{Fra92}
John Franks.
\newblock Geodesics on {$S^2$} and periodic points of annulus homeomorphisms.
\newblock {\em Invent. Math.}, 108(2):403--418, 1992.

\bibitem{Fra96}
John Franks.
\newblock Area preserving homeomorphisms of open surfaces of genus zero.
\newblock {\em New York J. Math.}, 2:1--19, electronic, 1996.

\bibitem{FH03}
John Franks and Michael Handel.
\newblock Periodic points of {H}amiltonian surface diffeomorphisms.
\newblock {\em Geom. Topol.}, 7:713--756, 2003.

\bibitem{Gin10}
Viktor~L. Ginzburg.
\newblock The {C}onley conjecture.
\newblock {\em Ann. of Math. (2)}, 172(2):1127--1180, 2010.

\bibitem{GG10}
Viktor~L. Ginzburg and Ba\c{s}ak~Z. G\"{u}rel.
\newblock Local {F}loer homology and the action gap.
\newblock {\em J. Symplectic Geom.}, 8(3):323--357, 2010.

\bibitem{Giv90}
Alexander~B. Givental\cprime.
\newblock Nonlinear generalization of the {M}aslov index.
\newblock In {\em Theory of singularities and its applications}, volume~1 of
  {\em Adv. Soviet Math.}, pages 71--103. Amer. Math. Soc., Providence, RI,
  1990.

\bibitem{GKPS}
Gustavo {Granja}, Yael {Karshon}, Milena {Pabiniak}, and Sheila {Sandon}.
\newblock {Givental's non-linear Maslov index on lens spaces}.
\newblock {\em arXiv e-prints}, page arXiv:1704.05827, April 2017.

\bibitem{GM69b}
Detlef Gromoll and Wolfgang Meyer.
\newblock On differentiable functions with isolated critical points.
\newblock {\em Topology}, 8:361--369, 1969.

\bibitem{Hatcher}
Allen Hatcher.
\newblock {\em Algebraic topology}.
\newblock Cambridge University Press, Cambridge, 2002.

\bibitem{Hin09}
Nancy Hingston.
\newblock Subharmonic solutions of {H}amiltonian equations on tori.
\newblock {\em Ann. of Math. (2)}, 170(2):529--560, 2009.

\bibitem{HZ94}
Helmut {Hofer} and Eduard {Zehnder}.
\newblock {\em {Symplectic invariants and Hamiltonian dynamics. Reprint of the
  1994 original.}}
\newblock Basel: Birkh\"auser, reprint of the 1994 original edition, 2011.

\bibitem{LeC06}
Patrice Le~Calvez.
\newblock Periodic orbits of {H}amiltonian homeomorphisms of surfaces.
\newblock {\em Duke Math. J.}, 133(1):125--184, 2006.

\bibitem{PSS96}
S.~Piunikhin, D.~Salamon, and M.~Schwarz.
\newblock Symplectic {F}loer-{D}onaldson theory and quantum cohomology.
\newblock In {\em Contact and symplectic geometry ({C}ambridge, 1994)},
  volume~8 of {\em Publ. Newton Inst.}, pages 171--200. Cambridge Univ. Press,
  Cambridge, 1996.

\bibitem{PS16}
Leonid Polterovich and Egor Shelukhin.
\newblock Autonomous {H}amiltonian flows, {H}ofer's geometry and persistence
  modules.
\newblock {\em Selecta Math. (N.S.)}, 22(1):227--296, 2016.

\bibitem{SZ92}
Dietmar Salamon and Eduard Zehnder.
\newblock Morse theory for periodic solutions of {H}amiltonian systems and the
  {M}aslov index.
\newblock {\em Comm. Pure Appl. Math.}, 45(10):1303--1360, 1992.

\bibitem{She19}
Egor {Shelukhin}.
\newblock {On the Hofer-Zehnder conjecture}.
\newblock {\em arXiv e-prints}, page arXiv:1905.04769, May 2019.

\bibitem{SheZhao19}
Egor {Shelukhin} and Jingyu {Zhao}.
\newblock {The $\mathbb{Z}/p \mathbb{Z}$-equivariant product-isomorphism in
  fixed point Floer cohomology}.
\newblock {\em arXiv e-prints}, page arXiv:1905.03666, May 2019.

\bibitem{The98}
David Th\'{e}ret.
\newblock Rotation numbers of {H}amiltonian isotopies in complex projective
  spaces.
\newblock {\em Duke Math. J.}, 94(1):13--27, 1998.

\bibitem{Tray}
Lisa Traynor.
\newblock Symplectic homology via generating functions.
\newblock {\em Geom. Funct. Anal.}, 4(6):718--748, 1994.

\bibitem{Ush11}
Michael Usher.
\newblock Boundary depth in {F}loer theory and its applications to
  {H}amiltonian dynamics and coisotropic submanifolds.
\newblock {\em Israel J. Math.}, 184:1--57, 2011.

\bibitem{Vit}
Claude Viterbo.
\newblock Symplectic topology as the geometry of generating functions.
\newblock {\em Math. Ann.}, 292(4):685--710, 1992.

\bibitem{Vit96}
Claude {Viterbo}.
\newblock {Functors and Computations in Floer homology with Applications Part
  II}.
\newblock {\em arXiv e-prints}, page arXiv:1805.01316, May 2018.

\bibitem{Whi56}
George~W. Whitehead.
\newblock Homotopy groups of joins and unions.
\newblock {\em Trans. Amer. Math. Soc.}, 83:55--69, 1956.

\bibitem{CZ05}
Afra Zomorodian and Gunnar Carlsson.
\newblock Computing persistent homology.
\newblock {\em Discrete Comput. Geom.}, 33(2):249--274, 2005.

\end{thebibliography}

\end{document}